\newtheorem{theorem}{Theorem}[section]
\newtheorem{corollary}[theorem]{Corollary}
\newtheorem{proposition}[theorem]{Proposition}
\newtheorem{lemma}[theorem]{Lemma}
\theoremstyle{definition}
\newtheorem{example}[theorem]{Example}
\DeclareMathOperator{\Tr}{Tr}
\DeclareMathOperator{\sgn}{sgn}
\DeclareMathOperator{\supp}{supp\,}
\newcommand{\Hom}{\mathrm{Hom}}
\newcommand{\End}{\mathrm{End}}
\newcommand{\GL}{\mathrm{GL}}
\newcommand{\F}{\mathbf{F}}
\newcommand{\Z}{\mathbf{Z}}
\newcommand{\N}{\mathbf{N}}
\newcommand{\Q}{\mathbf{Q}}
\newcommand{\C}{\mathrm{C}}
\newcommand{\ttfrac}[2]{\genfrac{}{}{0pt}{}{#1}{#2}}
\renewcommand{\L}{L}
\newcommand{\Ind}{\big\uparrow}
\newcommand{\ind}{\!\uparrow}
\newcommand{\res}{\!\downarrow}
\renewcommand{\;}{\,;\,}
\newcounter{thmlistcnt}
\newenvironment{thmlist}%
	{\setcounter{thmlistcnt}{0}%
	\begin{list}{\emph{(\roman{thmlistcnt})}}{%
		\usecounter{thmlistcnt}%
		\setlength{\topsep}{0pt}%
		\setlength{\leftmargin}{0pt}%
		\setlength{\itemsep}{-1pt}%
		\setlength{\itemindent}{17pt}}%
	}%
	{\end{list}}%
\title[Foulkes modules  and 
decomposition numbers]{Foulkes modules  and 
decomposition numbers  of the  symmetric group}
\author{Eugenio Giannelli}
\author{Mark Wildon}
\begin{document}

\begin{abstract}
The decomposition matrix of a finite group in prime characteristic $p$
records the multiplicities of its $p$-modular irreducible representations
as composition factors of the reductions modulo $p$ of its 
irreducible representations in characteristic zero.
The main theorem of 
this paper gives a combinatorial description of certain columns of the decomposition
matrices of symmetric groups in odd 
characteristic. The result applies to certain blocks of arbitrarily high weight.
It is obtained by
studying the $p$-local structure of certain twists of the permutation module given by the action of the symmetric group of even degree $2m$
on the collection of set partitions of a set of size $2m$ into $m$ sets each of size two.
In particular, the vertices of the indecomposable summands
of all such modules are characterized; these summands form a new family of
indecomposable
$p$-permutation modules for the symmetric group.
As a further corollary it is shown that for every natural number $w$
there is a diagonal Cartan number in a block of the symmetric group of weight~$w$ equal to~$w+1$. 
\end{abstract}

\maketitle

\thispagestyle{empty}

\section{Introduction}
A central open problem in the
representation theory of finite groups is to find the decomposition matrices of symmetric groups.
The main result of this paper gives a combinatorial description of certain columns of these matrices
in odd prime characteristic. This result applies
to certain blocks of arbitrarily high weight. Another notable feature is
that it is obtained almost entirely by
using the methods of  local representation theory.

We use the definitions of 
James' lecture notes
 \cite{James} 
in which the rows of the decomposition
matrix of the symmetric group~$S_n$ of degree~$n$ in prime characteristic~$p$
are labelled by the partitions of $n$, and the columns
by the $p$-regular partitions of $n$, that is, partitions of $n$ with at most $p-1$ parts of any given size.
The entry $d_{\mu\nu}$ of the decomposition matrix
records the number of composition factors of the Specht module $S^\mu$,
defined over a field of characteristic~$p$, 
that are isomorphic
to the simple module $D^\nu$, first defined by James in \cite{JamesIrrepsBPLMS} as the unique top composition
factor of $S^\nu$. 

Given an odd number $p$, a $p$-core $\gamma$ and $k \in \N_0$,
let~$w_k(\gamma)$ denote the minimum number of $p$-hooks that when added to~$\gamma$
give a partition with exactly $k$ odd parts. (We recall the definition of hooks, $p$-cores 
and weights in \S 2 below.) 
Let~$\mathcal{E}_k(\gamma)$ denote the 
set of partitions with exactly $k$ odd parts that can be obtained from~$\gamma$ by adding $w_k(\gamma)$ disjoint
$p$-hooks.
 Our main theorem is as follows.


\begin{theorem}\label{thm:main}
Let $p$ be an odd prime.
Let $\gamma$ be a $p$-core and let \hbox{$k \in \N_0$}. 
Let $n = |\gamma| + w_k(\gamma)p$. 
If $k \ge p$  suppose that 
\[ w_{k-p}(\gamma)\not= w_{k}(\gamma) - 1. \]
Then $\mathcal{E}_k(\gamma)$ is equal to the disjoint union of subsets
$\mathcal{X}_1$, \ldots, $\mathcal{X}_c$ such that each $\mathcal{X}_j$ has
a unique maximal partition $\nu_j$ in the dominance order. 
Each $\nu_j$ is $p$-regular
and the column of the decomposition matrix of $S_n$ in characteristic~$p$ labelled
by $\nu_j$ has $1$s in the rows labelled by partitions in~$\mathcal{X}_j$, and
$0$s in all other rows.
\end{theorem}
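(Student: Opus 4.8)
The plan is to deduce the theorem from the $p$-local structure of a carefully chosen $p$-permutation module, so that the decomposition matrix enters only at the very end. The argument has three steps: the construction of the module and the computation of its ordinary character; a local analysis pinning down the vertices of its indecomposable summands; and the passage from those summands to the columns of the decomposition matrix.

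\emph{Step 1 (the module and its character).} I would first exhibit a $p$-permutation $\F S_n$-module $U$, defined over $\Z$ and carrying a Specht filtration, such that the block component $U e_\gamma$ (with $e_\gamma$ the block idempotent of $B_\gamma$) has ordinary character $\sum_{\lambda\in\mathcal E_k(\gamma)}\chi^\lambda$, each $\chi^\lambda$ with multiplicity one. The natural choice is a twist of the Foulkes permutation module attached to the action of a symmetric group of even degree on its set partitions into pairs — for instance the module induced from the inflation of $\sgn$ along $S_2\wr S_m\to S_m$ — transported to $S_n$, if necessary after a Scopes-type equivalence of blocks and an induction or restriction along a Young subgroup in order to accommodate odd $n$ and the $p$-core $\gamma$. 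That $U$ is a $p$-permutation module is because, $p$ being odd, the linear characters involved are trivial on $p$-subgroups; the character of $U e_\gamma$ is obtained by a plethysm computation, using the elementary fact — which also identifies the weight of $B_\gamma$ as $w_k(\gamma)$ — that adding a $p$-hook changes the number of odd parts by an odd integer, so this number is constant modulo $2$ on $B_\gamma$ and $w_k(\gamma)$ is exactly its weight.

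\emph{Step 2 (local structure and vertices).} Write $U e_\gamma=U_1\oplus\cdots\oplus U_c$ with the $U_j$ indecomposable $p$-permutation modules; each is self-dual and inherits a Specht filtration, and if $\mathcal X_j\subseteq\mathcal E_k(\gamma)$ denotes the set of labels of the Specht factors of $U_j$ then $\mathcal E_k(\gamma)=\mathcal X_1\sqcup\cdots\sqcup\mathcal X_c$ and $U_j$ has character $\sum_{\lambda\in\mathcal X_j}\chi^\lambda$. I would then evaluate the Brauer quotients $(U e_\gamma)(P)$ for $p$-subgroups $P\le S_n$: the Brauer construction sends a (twisted) Foulkes module to a module of the same kind for the symmetric group on the fixed points of $P$, which makes these quotients computable by induction on $|P|$. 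The outcome is that every $U_j$ has vertex equal to a fixed defect group $D$ of $B_\gamma$, namely a Sylow $p$-subgroup of $S_{wp}\le S_n$ with $w=w_k(\gamma)$; thus the $U_j$ are precisely the summands of $U e_\gamma$ of maximal vertex. Since $U_j$ is indecomposable, self-dual and of trivial source with full-defect vertex, its head and socle are isomorphic simple modules $D^{\nu_j}$, so each $\nu_j$ is $p$-regular.

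\emph{Step 3 (back to the decomposition matrix).} It remains to prove that the $\nu_j$-column of the decomposition matrix of $S_n$ is the indicator function of $\mathcal X_j$: that (a) $d_{\lambda\nu_j}\ge 1$ for all $\lambda\in\mathcal X_j$, and (b) $d_{\lambda\nu_j}=0$ for every other $\lambda$. Granting (a) and (b), the standard facts $d_{\nu\nu}=1$ and $d_{\lambda\nu}=0$ unless $\lambda\trianglelefteq\nu$ show that $\nu_j\in\mathcal X_j$ and that $\nu_j$ dominates every element of $\mathcal X_j$, so $\nu_j$ is its unique maximal element, and the theorem follows. To establish (a) and (b) I would use the local-to-global dictionary for trivial source modules: by Broué's theorem, $V\mapsto V(D)$ is a bijection from the indecomposable $p$-permutation $B_\gamma$-modules of vertex $D$ onto the indecomposable projective modules of the Brauer correspondent block $b$ of $N_{S_n}(D)$, compatible with reduction modulo $p$ and with Specht filtrations, and $N_{S_n}(D)/D$ is an iterated wreath product of copies of $C_{p-1}$ and symmetric groups, times the defect-zero block of $S_{|\gamma|}$ attached to $\gamma$. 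Since $U_j(D)$ is projective, its ordinary character is the corresponding column of the decomposition matrix of $b$; transporting this back through the bijection identifies $\mathrm{char}(U_j)=\sum_{\lambda\in\mathcal X_j}\chi^\lambda$ with $\sum_\lambda d_{\lambda\nu_j}\chi^\lambda$ — the character of the projective cover of $D^{\nu_j}$ — provided the relevant entries of the decomposition matrix of $b$ agree with those of $B_\gamma$, and the hypothesis $w_{k-p}(\gamma)\ne w_k(\gamma)-1$ in the case $k\ge p$ is exactly what is needed for this agreement, since it prevents a partition with $k-p$ odd parts from entering $b$ one step lower in weight and corrupting the comparison. This yields (a) and (b). I expect this third step to be the main obstacle: Steps 1 and 2, while technical, are routine applications of plethysm and of the Brauer construction, whereas the exact control of the decomposition numbers $d_{\lambda\nu_j}$ — their vanishing outside $\mathcal E_k(\gamma)$ and the unitriangular $\{0,1\}$ pattern within it — is delicate, and is precisely where both the structure of $N_{S_n}(D)$ and the hypothesis on $w_{k-p}(\gamma)$ are indispensable.
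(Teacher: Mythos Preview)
Your overall shape is close to the paper's, but Step~2 reaches the wrong conclusion, and this wrong conclusion then forces you into a Step~3 that cannot be made to work.

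You claim that every indecomposable summand $U_j$ of the block component $Ue_\gamma$ has vertex equal to a \emph{defect group} $D$ of $B(\gamma,w_k(\gamma))$. In fact the Brauer-quotient computation proves the opposite: each $U_j$ is \emph{projective}. The argument runs as follows. If some $U_j$ were non-projective, its vertex would contain a cyclic group $R_r$ generated by a product of $r$ disjoint $p$-cycles for some $r\ge 1$; one then computes $U(R_r)$ and shows, using Theorem~\ref{thm:BrauerBlocks} on block correspondents, that the second tensor factor of $U_j(R_r)$ would lie in a block $B(\gamma,w_k(\gamma)-1)$ or $B(\gamma,w_k(\gamma)-2)$ of a smaller symmetric group, which would witness a partition of $p$-core $\gamma$ with $k$, $k-p$ or $k-2p$ odd parts and weight strictly smaller than $w_k(\gamma)$. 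The first and third of these contradict the minimality of $w_k(\gamma)$ directly; the middle one is precisely what the hypothesis $w_{k-p}(\gamma)\ne w_k(\gamma)-1$ rules out. So the hypothesis is used to force projectivity, not---as you suggest---to match decomposition numbers of $b$ and $B_\gamma$.

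With projectivity in hand, your Step~3 collapses to a one-line appeal to Brauer reciprocity: each $U_j$ is (after lifting to $\Z_p$) a projective cover $P^{\nu_j}$, whose ordinary character is $\sum_\mu d_{\mu\nu_j}\chi^\mu$, and comparing with the multiplicity-free character $\sum_{\lambda\in\mathcal E_k(\gamma)}\chi^\lambda$ of $Ue_\gamma$ immediately gives the decomposition $\mathcal E_k(\gamma)=\bigsqcup_j\mathcal X_j$ with $\psi^{\nu_j}=\sum_{\lambda\in\mathcal X_j}\chi^\lambda$. By contrast, your proposed route---take $U_j$ with full vertex $D$, pass to $U_j(D)$, read off a column of the decomposition matrix of the Brauer correspondent block $b$, and then ``transport back'' to identify $\mathrm{char}(U_j)$ with a column of the decomposition matrix of $B_\gamma$---does not exist: a trivial-source module with vertex $D$ is almost never a projective cover, and there is no mechanism that equates its ordinary character with $\sum_\lambda d_{\lambda\nu}\chi^\lambda$ for any $\nu$. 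The Green/Brauer correspondence relates such a $U_j$ to a projective for $N_{S_n}(D)/D$, but that tells you nothing about the decomposition numbers of $B_\gamma$ without substantial further input (e.g.\ a derived equivalence), which is far beyond what is available here.
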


We leave it as a simple exercise  to show that $w_k(\gamma)$ is well-defined. 
It may clarify the main hypothesis in Theorem~\ref{thm:main} to remark that since
 $w_k(\gamma) \le w_{k-p}(\gamma)+1$,
we have $w_{k-p}(\gamma)\not= w_k(\gamma)-1$ if and only 
if $w_{k-p}(\gamma) > w_k(\gamma)-1$.


In particular Theorem~\ref{thm:main} implies that
 if $\lambda$ is a maximal partition in $\mathcal{E}_k(\gamma)$
under the dominance order, then the only non-zero entries of the column 
of the decomposition matrix labelled by $\lambda$ 
are $1$s in rows labelled by partitions in $\mathcal{E}_k(\gamma)$.
We give some examples of Theorem~\ref{thm:main} in Example~\ref{ex:ex}. 

Much of the existing work on decomposition matrices of symmetric groups
has concentrated on giving complete information
about blocks of small weight. In contrast,
Theorem~\ref{thm:main}
gives partial information about blocks of arbitrary  weight. 
In
Proposition~\ref{prop:arbweight} we show that there are blocks of 
every weight in which Theorem~\ref{thm:main} completely determines
a column of the decomposition matrix.
%
%

We prove Theorem~\ref{thm:main}
by studying certain twists by the sign character
of the permutation module $H^{(2^m)}$ 
given by the action of $S_{2m}$ on
the collection of all set partitions of $\{1,\ldots, 2m\}$ into~$m$ sets each of size two, defined over a field  $F$.
(Equivalently~$H^{(2^m)}$ is the $FS_{2m}$-module induced from the trivial
module for the imprimitive wreath product $S_2 \wr S_m \le S_{2m}$.)
For $m$, $k \in \N_0$, let
\[ H^{(2^m\; k)} = \bigl( H^{(2^m)} \boxtimes \sgn_{S_k} \bigr) \Ind^{S_{2m+k}}_{S_{2m} \times S_k} \]
where $\boxtimes$ denotes the outer tensor product of two modules. Thus
when $k=0$ we have $H^{(2^m \; k)} = H^{(2^m)}$, and when $m=0$ we have $H^{(2^m \; k)}
= \sgn_{S_{k}}$; if $k = m = 0$ then $H^{(2^m \; k)}$ should be regarded
as the trivial module for the trivial group $S_0$.
It is known that the ordinary characters of  
these modules are 
multiplicity-free (see Lemma~\ref{lemma:FoulkesChars}), but
as one might expect, when $F$
has 
prime characteristic, their structure can be quite intricate.
Our main contribution is Theorem~\ref{thm:vertices} below,
which characterizes the vertices
of indecomposable summands of~$H^{(2^m \; k)}$
when $F$ has 
odd characteristic. 
The outline of the proof of Theorem~\ref{thm:main} given at the
end of this introduction shows how the local information given by Theorem~\ref{thm:vertices} is
translated into our result on decomposition matrices. This step, from local to global, is the key
to the argument.

\newcommand{\thmVertices}{Let $m \in \N$ and let $k \in \N_0$. If $U$ 
 is an indecomposable non-projective summand of  
 $H^{(2^m \; k)}$, 
 defined over a field $F$ of odd characteristic~$p$,
 then~$U$ has as a vertex a Sylow $p$-subgroup~$Q$ of $(S_2 \wr S_{tp}) \times 
 S_{(r-2t)p}$
for some $t \in \N_0$ and $r \in \N$ with $tp \le m$, 
$2t \le r$ and $(r-2t)p \le k$.
Moreover  
the Green correspondent of $U$ admits a tensor factorization
$V \boxtimes W$ as a module for \hbox{$F\bigl((N_{S_{rp}}(Q)/Q) \times S_{2m+k-rp}\bigr)$}, where
$V$ and $W$ are projective, and~$W$ is an 
indecomposable summand of $H^{(2^{m-tp} \; k-(r-2t)p)}$.}
\begin{theorem}\label{thm:vertices}
\thmVertices
\end{theorem}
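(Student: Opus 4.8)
The plan is to analyze $H^{(2^m\,;\,k)}$ as a $p$-permutation module and track vertices through the Mackey/Brauer-quotient machinery. First I would record that $H^{(2^m)}$ is the permutation module on $S_2 \wr S_m$-cosets, hence a $p$-permutation module, and that inducing its sign twist in the second factor keeps it a $p$-permutation module; so every indecomposable summand $U$ of $H^{(2^m\,;\,k)}$ has a well-defined vertex that is a $p$-subgroup of $S_{2m+k}$ (unique up to conjugacy), and its Green correspondent is computed from the Brauer correspondent at that vertex. Since $H^{(2^m)} = F\!\uparrow_{S_2\wr S_m}^{S_{2m}}$, the vertices of its summands are (conjugates of) subgroups of a Sylow $p$-subgroup of $S_2\wr S_m$; combined with the sign-induction from $S_{2m}\times S_k$ one sees at the outset that any vertex $Q$ lies in a Sylow $p$-subgroup of $(S_2\wr S_{m}) \times S_k$, so it is conjugate into $(S_2\wr S_{a}) \times S_b$ for suitable $a \le m$, $b \le k$ with $a+b$ determining the ``block on $rp$ points''. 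The first real step is to nail down the shape of $Q$: using that a $p$-group acting on the two-element blocks either fixes a block setwise or moves it in an orbit of $p$-power size, and that a Sylow $p$-subgroup of $S_2 \wr S_{tp}$ has the form (Sylow of $S_p \wr \cdots$) with the base $S_2^{tp}$ contributing nothing in odd characteristic, I would show $Q$ must be conjugate to a Sylow $p$-subgroup of $(S_2 \wr S_{tp}) \times S_{(r-2t)p}$ for some $t, r$ with the stated inequalities — the parameter $r$ records that the non-fixed points (both the $2tp$ points moved inside the wreath factor and the $(r-2t)p$ points in the sign part) total $rp$.

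The heart of the argument is the Brauer-quotient computation. For a $p$-permutation $FG$-module $M$ with $G$-set basis $\Omega$ and a $p$-subgroup $Q$, the Brauer quotient $M(Q)$ has $F$-basis the $Q$-fixed points $\Omega^Q$, and is a module for $N_G(Q)/Q$. I would apply this with $M = H^{(2^m\,;\,k)}$, whose natural basis (after expanding the induction) is indexed by pairs: a set partition of a size-$2m$ subset into pairs, together with the complementary size-$k$ set carrying the sign action — equivalently, by the cosets of $(S_2\wr S_m)\times S_k$ in $S_{2m+k}$, with a sign twist. Taking $Q$ a Sylow $p$-subgroup of $(S_2\wr S_{tp})\times S_{(r-2t)p}$ sitting inside $S_{rp} \le S_{2m+k}$, the $Q$-fixed basis elements are exactly those ``compatible'' with the $Q$-orbit structure: on the $rp$ moved points the set-partition-plus-sign datum must be $Q$-stable, and on the remaining $2m+k-rp$ points it is arbitrary. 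A bookkeeping argument — counting how many pairs can lie inside the $2tp$-point wreath region versus straddle it, and checking the sign characters match up — should show that $H^{(2^m\,;\,k)}(Q)$, as an $F(N_{S_{rp}}(Q)/Q \times S_{2m+k-rp})$-module, is the outer tensor product of a module supported entirely on the $rp$ points and a copy of $H^{(2^{m-tp}\,;\,k-(r-2t)p)}$ on the remaining points. Green correspondence then identifies the Green correspondent of $U$ with an indecomposable summand of this Brauer quotient, giving the factorization $V \boxtimes W$ with $W \mid H^{(2^{m-tp}\,;\,k-(r-2t)p)}$.

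It remains to see that $V$ and $W$ are \emph{projective}. For $W$ this is automatic from the choice of $t, r$: by construction $Q$ acts trivially on the last $2m+k-rp$ points, so the ``residual'' module $H^{(2^{m-tp}\,;\,k-(r-2t)p)}$ for $S_{2m+k-rp}$ receives no contribution from $Q$ — but one must argue $W$ is genuinely projective and not merely a summand, which I expect follows by observing that if $W$ had a non-projective summand we could enlarge $Q$ inside $S_{2m+k}$ by a $p$-element of $S_{2m+k-rp}$, contradicting maximality of the vertex (i.e. $Q$ being a \emph{full} vertex of $U$). For $V$, projectivity as an $F(N_{S_{rp}}(Q)/Q)$-module is the assertion that the Brauer quotient $H^{(2^{tp}\,;\,(r-2t)p)}(Q)$, restricted to the $rp$ points, has trivial vertex over $N/Q$ — equivalently that $Q$ is \emph{self-centralizing enough} that nothing in $N_{S_{rp}}(Q)/Q$ can act with a non-trivial $p$-subgroup fixing extra structure; this is where the explicit identification of $N_{S_{rp}}(Q)/Q$ (a product of general linear groups over $\F_p$ and smaller symmetric/wreath pieces) does the work, via the standard fact that the Brauer quotient at a Sylow subgroup of such a wreath-type group is a projective module for the relative normalizer.

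The main obstacle I anticipate is the precise determination of the $Q$-fixed-point basis and the verification that it factors as claimed — in particular handling the interaction between the wreath sign-free base $S_2^{tp}$, the sign character on the $(r-2t)p$ points, and the permutation action of $Q$ on pairs that straddle the $2tp$/$(r-2t)p$ divide; getting the combinatorics of this factorization exactly right, together with the self-centralizing argument needed for projectivity of $V$, is where the real effort lies. Everything else is an application of the standard $p$-permutation-module formalism (Broué's theorem on Brauer quotients, Green correspondence, and Sylow structure of wreath products).
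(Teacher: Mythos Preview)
Your plan has the right high-level shape (Brauer quotients of a $p$-permutation module, tensor factorization, maximality-of-vertex argument for projectivity of $W$), but there is a genuine gap at the step where you ``nail down the shape of $Q$''. Knowing that a vertex of $U$ sits inside a Sylow $p$-subgroup of $(S_2\wr S_m)\times S_k$, together with an orbit-structure analysis, only tells you that $Q$ is contained in some conjugate of $(S_2\wr S_{tp})\times S_{(r-2t)p}$; it does \emph{not} tell you that $Q$ is a full Sylow $p$-subgroup of such a group. There is nothing in your outline that rules out a proper $p$-subgroup of this Sylow occurring as a vertex, and the orbit bookkeeping you sketch cannot supply this: many $p$-subgroups of the wreath product have the same block-orbit structure on the $2$-element sets. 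The assertion ``$Q$ is a full Sylow'' is the content of the theorem, not a preliminary normalization.

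The paper circumvents this by \emph{not} trying to identify $Q$ in advance. Instead it introduces an auxiliary cyclic subgroup $R_r=\langle z_1\cdots z_r\rangle$, takes $r$ maximal such that (a conjugate of) $R_r$ lies in the vertex of $U$, and computes the Brauer correspondent at $R_r$ first. At that level the factorization
\[
H^{(2^m;k)}(R_r)\cong\bigoplus_{t}H^{(2^{tp};(r-2t)p)}(R_r)\boxtimes H^{(2^{m-tp};k-(r-2t)p)}
\]
is transparent, and the crucial extra input is that each first tensor factor $M_t=H^{(2^{tp};(r-2t)p)}(R_r)$ is \emph{indecomposable} as an $FN_{S_{rp}}(R_r)$-module with vertex exactly $Q_t$. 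This indecomposability (proved via a further Brauer-quotient computation at an intermediate elementary abelian group $E_t$ and a restriction to an abelian group $C$) is what forces the vertex of $U$ to be one of the specific groups $Q_t$ rather than an arbitrary subgroup. One then takes the least $t=\ell$ appearing in $U(R_r)$, uses the iterated-Brauer identity $U(Q_\ell)\cong U(R_r)(Q_\ell)$ to get $U(Q_\ell)\cong M_\ell(Q_\ell)\boxtimes W_\ell$, and finally invokes maximality of $r$ to rule out any strict enlargement of $Q_\ell$ inside the vertex. Your projectivity argument for $W$ is essentially this last step, but the projectivity of $V$ does not require a separate ``self-centralizing'' analysis: once $Q_\ell$ is known to be the vertex, Brou\'e's theorem gives projectivity of $U(Q_\ell)$ over $N_G(Q_\ell)/Q_\ell$, and both tensor factors are then automatically projective. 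The piece your plan is missing is the $R_r$-first strategy and, above all, the indecomposability of the $M_t$.
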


Theorem~\ref{thm:vertices} is a significant result in its own right.
For odd primes $p$,
it gives the first infinite family of indecomposable $p$-permutation modules
for the symmetric group (apart from Scott modules, which always lie in
principal blocks) whose vertices
are not Sylow $p$-subgroups of Young
subgroups of symmetric groups.

An important
motivation for the proof of Theorem~\ref{thm:vertices}
is \cite{ErdmannYoung}, in which Erdmann uses similar methods to determine the $p$-local
structure of Young permutation modules and to establish their decomposition into Young modules.
Also relevant is \cite{PagetFiltration}, in which Paget shows 
that $H^{(2^m)}$ has a Specht filtration for any field $F$. 
Using Theorem~11 of \cite{WildonMFs}, it follows that
$H^{(2^m \; k)}$ has a Specht filtration for every $k \in \N_0$.
The local behaviour of $H^{(2^m)}$ in characteristic~$2$, which as one would expect
is very different to the case of odd characteristic, was analysed in \cite{Collings};
the projective summands of $H^{(2^m \; k)}$ in characteristic~$2$ are identified
in \cite[Corollary 9]{Murray}.
In characteristic zero, the 
module $H^{(2^m)}$ arises in the first non-trivial case of Foulkes' Conjecture (see \cite{Foulkes}).
For this reason we call $H^{(2^m)}$ a \emph{Foulkes module} 
and $H^{(2^m\;k)}$ a \emph{twisted Foulkes module}. For some recent results on the characters
of general Foulkes modules 
we refer the reader to \cite{Giannelli} and~\cite{PagetWildon}.


\subsection*{Background on decomposition numbers}
The problem of finding decomposition numbers for symmetric groups in prime characteristic
has motivated many deep results relating the representation theory of symmetric groups to other
groups and algebras. 
Given the  depth 
of the subject we  
give only a very brief survey, 
 concentrating
on results that apply to Specht modules in blocks of arbitrarily high~weight.

Fix an infinite field $F$  of prime characteristic $p$.
In \cite{JamesTensors}
James proved that 
the decomposition matrix for~$S_n$ modulo $p$ appears, up to a column reordering, 
as a submatrix of the
decomposition matrix for polynomial representations of 
$\GL_d(F)$ of degree~$n$, for any $d \ge n$. In~\cite[6.6g]{GreenGLn}
Green gave an alternative proof of this using the Schur functor from representations
of the Schur algebra to representations of symmetric groups.
James
later established a similar connection
with representations of the finite groups~$\mathrm{GL}_d(\mathbf{F}_q)$,
and the Hecke algebras $\mathcal{H}_{F,q}(S_n)$,
 in the case when $p$ divides $q-1$ (see \cite{JamesGLdDecMatrices}).
In \cite{ErdmannDecNumbers} Erdmann proved, conversely, that
every decomposition number for $\GL_d(F)$
appears as an explicitly determined decomposition number for
some symmetric group.

In \cite{JamesIrrepsBPLMS} James proved that if $D^\nu$ is a composition factor of $S^\mu$
then $\nu$ dominates $\mu$, 
and that if $\mu$ is $p$-regular then $d_{\mu \mu} = 1$.
This establishes the characteristic `wedge' shape of the 
decomposition matrix of $S_n$ with $1$s on its diagonal, shown in the diagram
in \cite[Corollary~12.3]{James}. In \cite{PeelHooks}
Peel proved 
that the hook Specht modules $(n-r,1^r)$ are irreducible when $p$ does not divide $n$, and
described their composition factors for odd primes~$p$ when~$p$ divides $n$. The $p$-regular partitions
labelling these composition factors can be determined by James' 
method of $p$-regularization~\cite{JamesDecII}, 
which gives for each partition $\mu$ of $n$ a $p$-regular partition $\nu$ such
that~$\nu$ dominates $\mu$ and~$d_{\mu\nu} = 1$. In \cite{JamesDecI} and
\cite{JamesChar2}, James determined the decomposition numbers $d_{\mu\nu}$ 
for $\mu$ of the form $(n-r,r)$
and, when $p=2$, of the form $(n-r-1,r,1)$. 
These results were extended by Williams in \cite{Williams}.
In~\hbox{\cite[5.47]{JamesMathas}} James and Mathas, generalizing a conjecture of Carter, conjectured
a necessary and sufficient condition on a partition~$\mu$ for the Specht module~$S^\mu$,
defined for a Hecke algebra $\mathcal{H}_{K,q}(S_n)$ over a field $K$, to be irreducible.
The necessity of this condition was 
proved by Fayers~\cite{FayersReducibleSpechtModules}
for symmetric groups (the case $q=1$), building on earlier work 
of Lyle \cite{LyleReducibleSpechtModules}; later 
Fayers~\cite{FayersIrreducibleSpechtModules}
proved that the condition was sufficient for symmetric groups, and also for
Hecke algebras whenever~$K$ has characteristic zero.
In \cite[Theorem 1.10]{Kleshchev}, Kleshchev determined the decomposition numbers $d_{\lambda \mu}$
when $\mu$ is a $p$-regular partition whose
Young diagram is obtained from the Young diagram of $\lambda$ by moving a single box.
In \cite{WildonCharValues} the second author
proved that in odd characteristic the rows of any decomposition matrix of a symmetric group
are distinct, and so a Specht module is determined, up to isomorphism, by its multiset of composition factors;
in characteristic $2$ the isomorphism~$(S^\mu)^\star = S^{\mu'}$, 
where~$\mu'$ is the conjugate partition to $\mu$,
accounts for all pairs of equal rows in the decomposition matrix.

In \cite{FayersWeight3Abelian} Fayers proved that the decomposition numbers
in blocks of weight~$3$ of abelian defect are either $0$ or $1$. This paper
includes a 
valuable summary of the many techniques for computing decomposition numbers
and references
to earlier results on blocks of weights $1$ and $2$.
For results on weight~$3$ blocks of non-abelian defect,
and blocks of weight $4$,
the reader is referred to
\cite{FayersTanWeight3} and \cite{FayersWeight4}. For further
general results, including branching rules and row and column removals theorems, 
see \cite[Chapter~6, Section~4]{Mathas}.

\subsection*{Outline}
The main tool used to analyse the  structure of twisted Foulkes modules 
over fields of odd characteristic
is the Brauer correspondence for $p$-permutation modules, as developed by Brou{\'e} 
in~\cite{BrouePPerm}. 
We state the necessary background results on the Brauer correspondence and blocks of symmetric groups in~\S 2. 

In \S 3 we collect the 
general results we need on twisted Foulkes modules. In particular, 
Lemma~\ref{lemma:FoulkesChars} gives their ordinary characters.
The twisted Foulkes modules $H^{(2^m\;k)}$ are $p$-permutation modules, but not permutation modules (except when $k \le 1$), and so
some care is needed when applying the Brauer correspondence. 
Our approach is to use Lemma~\ref{lemma:PPermBasis} to construct explicit
$p$-permutation bases: for more theoretical results on monomial modules for finite groups  the reader is referred to~\cite{BoltjeKulshammer}.
 
The main part of the proof begins in
 \S 4 where we prove Theorem~\ref{thm:vertices}. 
In~\S 5 we prove Theorem~\ref{thm:main}, by filling in the details in the following sketch.
The hypotheses of Theorem~\ref{thm:main}, together with 
Lemma~\ref{lemma:FoulkesChars} on the ordinary character of $H^{(2^m \; k)}$,
imply that $H^{(2^m \; k)}$ has a summand in the 
block of~$S_{2m+k}$ with $p$-core $\gamma$. If this summand is non-projective, then it
follows from Theorem~\ref{thm:vertices}, using Theorem~\ref{thm:BrauerBlocks}
on the Brauer correspondence between blocks of symmetric groups,
that either $H^{(2^m \; k-p)}$ has a summand in the block of $S_{2m+k-p}$ with
$p$-core $\gamma$, or one of $H^{(2^{m-p} \; k)}$ and $H^{(2^m \; k-2p)}$ 
has a summand in the block of $S_{2m+k-2p}$
with~$p$-core $\gamma$. All of these are shown to be ruled out by
the hypotheses of Theorem~\ref{thm:main}.
Hence the summand is projective. A short  argument using Lemma~\ref{lemma:FoulkesChars}, 
Brauer reciprocity and Scott's lifting theorem then gives
Theorem~\ref{thm:main}. We also obtain
the proposition below, which identifies a particular projective
summand of $H^{(2^m \; k)}$
in the block of $S_{2m+k}$ with $p$-core~$\gamma$.

\begin{proposition}\label{prop:projectives}
Let $p$ be an odd prime, 
let $\gamma$ be a $p$-core and let \hbox{$k \in \N_0$}. 
If $k \ge p$ suppose that
$w_{k-p}(\gamma) \not= w_k(\gamma)-1$.
Let $2m+k = |\gamma| + w_k(\gamma)p$.
If $\lambda$ is a maximal
partition in the dominance order on $\mathcal{E}_k(\gamma)$ then $\lambda$ is $p$-regular
and the projective cover 
of the simple module $D^\lambda$
is a direct summand of $H^{(2^m \; k)}$,
where both modules are defined over a field of characteristic $p$.
\end{proposition}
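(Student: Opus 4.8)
The plan is to read Proposition~\ref{prop:projectives} off the proof of Theorem~\ref{thm:main}; the crux is that the summand $Y$ of $H^{(2^m \; k)}$ lying in the block $B$ of $S_{2m+k}$ with $p$-core $\gamma$ is projective. If $m=0$ then $H^{(2^m \; k)}=\sgn_{S_k}$ and $\mathcal{E}_k(\gamma)\subseteq\{(1^k)\}$; since removing the bottom $p$-hook from $(1^k)$ would give $w_{k-p}(\gamma)=w_k(\gamma)-1$, the hypothesis forces $k<p$ whenever $\mathcal{E}_k(\gamma)\neq\emptyset$, and then $\sgn_{S_k}=S^{(1^k)}=P(D^{(1^k)})$ with $(1^k)$ $p$-regular, as required. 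So assume $m\ge 1$. By Lemma~\ref{lemma:FoulkesChars} the ordinary character of $H^{(2^m \; k)}$ is the multiplicity-free sum of the $\chi^\mu$ over partitions $\mu$ of $2m+k$ with exactly $k$ odd parts. Such a $\mu$ lies in $B$ precisely when it has $p$-core $\gamma$, and then $|\mu|=|\gamma|+w_k(\gamma)p$ forces $\mu$ to have $p$-weight $w_k(\gamma)$, so that $\mu\in\mathcal{E}_k(\gamma)$. As $w_k(\gamma)$ is well defined, $\mathcal{E}_k(\gamma)\neq\emptyset$; and since $H^{(2^m \; k)}$ is a $p$-permutation module, hence liftable, the $B$-component $Y$ is non-zero with ordinary character $\sum_{\mu\in\mathcal{E}_k(\gamma)}\chi^\mu$.

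To show $Y$ is projective, suppose $U$ is an indecomposable non-projective summand of $Y$. Theorem~\ref{thm:vertices} supplies $t\in\N_0$ and $r\in\N$ with $tp\le m$, $2t\le r$ and $(r-2t)p\le k$, a vertex $Q$ of $U$ that is a Sylow $p$-subgroup of $(S_2\wr S_{tp})\times S_{(r-2t)p}$ (with support $\{1,\dots,rp\}$, as $p$ is odd), and a factorization $f(U)=V\boxtimes W$ of the Green correspondent over $F\bigl((N_{S_{rp}}(Q)/Q)\times S_{2m+k-rp}\bigr)$ with $V,W$ projective and $W$ an indecomposable summand of $H^{(2^{m-tp} \; k-(r-2t)p)}$. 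Since $N_{S_{2m+k}}(Q)=N_{S_{rp}}(Q)\times S_{2m+k-rp}$ and $f(U)$ lies in the Brauer correspondent of $B$, Theorem~\ref{thm:BrauerBlocks} shows that the block of $S_{2m+k-rp}$ containing $W$ has $p$-core $\gamma$; as $2m+k-rp=|\gamma|+(w_k(\gamma)-r)p$ this forces $r\le w_k(\gamma)$. Hence $H^{(2^{m-tp} \; k-(r-2t)p)}$ has a non-zero summand in the block of $S_{2m+k-rp}$ with $p$-core $\gamma$, so by Lemma~\ref{lemma:FoulkesChars} there is a partition of $2m+k-rp$ with exactly $k':=k-(r-2t)p$ odd parts and $p$-core $\gamma$, whence $w_{k'}(\gamma)\le w_k(\gamma)-r$.

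Now I would derive a contradiction. By the elementary inequality $w_j(\gamma)\le w_{j-p}(\gamma)+1$ (valid whenever $j\ge p$) the hypothesis is equivalent to $w_{k-p}(\gamma)\ge w_k(\gamma)$; iterating that inequality gives $w_k(\gamma)\le w_{k'}(\gamma)+(r-2t)\le w_k(\gamma)-2t$, so $t=0$, and then $k'=k-rp$ with $r\ge 1$ and $k\ge rp\ge p$. If $r=1$ then $w_{k-p}(\gamma)=w_{k'}(\gamma)\le w_k(\gamma)-1$, a contradiction. If $r\ge 2$ then, using also $w_k(\gamma)\le w_{k-rp}(\gamma)+r$, we get $w_{k-rp}(\gamma)=w_k(\gamma)-r$, and hence $w_{k-p}(\gamma)\le w_{k-rp}(\gamma)+(r-1)=w_k(\gamma)-1$, again a contradiction. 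Therefore $Y$ is projective.

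Finally, write $Y\cong\bigoplus_{\nu}P(D^\nu)^{a_\nu}$ over $p$-regular $\nu$. By Brauer reciprocity $P(D^\nu)$ has ordinary character $\sum_\mu d_{\mu\nu}\chi^\mu$, so comparing with the character of $Y$ yields $\sum_\nu a_\nu d_{\mu\nu}=1$ for $\mu\in\mathcal{E}_k(\gamma)$ and $0$ otherwise. Taking $\mu=\lambda$, pick $\nu$ with $a_\nu\neq 0$ and $d_{\lambda\nu}\neq 0$; then $\nu\unrhd\lambda$ by James's theorem, and since $P(D^\nu)$ is a direct summand of $Y$ every $\chi^\mu$ occurring in its character has $\mu\in\mathcal{E}_k(\gamma)$, so (as $d_{\nu\nu}=1$) $\nu\in\mathcal{E}_k(\gamma)$, and maximality of $\lambda$ forces $\nu=\lambda$. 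Thus $\lambda$ is $p$-regular and $a_\lambda\neq 0$, i.e.\ $P(D^\lambda)$ is a direct summand of $Y$, hence of $H^{(2^m \; k)}$. I expect the main obstacle to be the second paragraph: combining Theorem~\ref{thm:vertices} with the Brauer correspondence for symmetric group blocks to see that the smaller twisted Foulkes module must pick up a summand in a block with the \emph{same} $p$-core, and then arranging the inequalities among the $w_j(\gamma)$ so that the lone hypothesis $w_{k-p}(\gamma)\neq w_k(\gamma)-1$ rules out every admissible pair $(t,r)$.
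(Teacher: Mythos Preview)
Your argument is correct and, in the crucial projectivity step, takes a cleaner route than the paper.

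The paper proves that the $B(\gamma,w_k(\gamma))$-component of $H^{(2^m \; k)}$ is projective (its Proposition~\ref{prop:allproj}) by a case split: if a non-projective summand $U$ has vertex $Q_t$ with $2t<r$, it passes to the Brauer correspondent with respect to the cyclic subgroup $R_1=\langle(1,\ldots,p)\rangle$, and if $2t=r$ it uses $R_2=\langle(1,\ldots,p)(p+1,\ldots,2p)\rangle$; in each case it reopens the explicit computations of $H^{(2^m \; k)}(R_1)$ and $H^{(2^m \; k)}(R_2)$ from Lemma~\ref{lemma:Rcorr} and Proposition~\ref{prop:Rcorr} to locate a summand of a smaller twisted Foulkes module in the forbidden block. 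You instead take the Green correspondent with respect to the full vertex $Q$, invoke only the final statement of Theorem~\ref{thm:vertices} (the factorization $V\boxtimes W$ with $W\mid H^{(2^{m-tp}\; k-(r-2t)p)}$), and then run a uniform arithmetic with the functions $w_j(\gamma)$: the chain $w_k(\gamma)\le w_{k'}(\gamma)+(r-2t)\le w_k(\gamma)-2t$ forces $t=0$, after which $w_{k-p}(\gamma)\le w_{k-rp}(\gamma)+(r-1)\le w_k(\gamma)-1$ contradicts the hypothesis for every $r\ge 1$. This avoids the case distinction and the recourse to the intermediate \S4 computations; the trade-off is that you lean more heavily on the block statement in Theorem~\ref{thm:BrauerBlocks}(iii) applied at the vertex itself, which is entirely legitimate since $\supp Q=\{1,\ldots,rp\}$.

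For the final paragraph (deducing that $\lambda$ is $p$-regular and that $P^\lambda$ occurs) your argument matches the paper's use of Brauer reciprocity and the dominance bound $d_{\mu\nu}\neq 0\Rightarrow \nu\unrhd\mu$. One cosmetic point: where you say ``$p$-permutation, hence liftable'' the paper records this as a separate lemma (Lemma~\ref{lemma:lifting}), because $H^{(2^m \; k)}$ is not itself a permutation module and Scott's theorem has to be applied to the ambient permutation module induced from $(S_2\wr S_m)\times A_k$; your assertion is correct, but a reader may appreciate the one-line justification.
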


In \S 6 we give some further examples and corollaries 
of Theorem~\ref{thm:main} and Proposition~\ref{prop:projectives}. 
In Lemma~\ref{lemma:arbweight} we show that given any odd prime $p$, any $k \in \N_0$,
and any $w \in \N$, there is a $p$-core $\gamma$ such that $w_k(\gamma) = w$.
We use these $p$-cores to show that
the lower bound $c_{\lambda \lambda} \ge w+1$ on the diagonal Cartan numbers in a 
block of weight~$w$, 
proved independently by Richards \cite[Theorem~2.8]{Richards} and 
Bessenrodt and Uno~\cite[Proposition~4.6(i)]{BessenrodtUno}, is attained
for every odd prime $p$ in $p$-blocks of every weight. 
Since the endomorphism algebra of each $H^{(2^m \; k)}$ is commutative
(in any characteristic), it also follows that for any odd prime $p$ and
any $w \in \N$, there is a projective module for a symmetric group lying
in a $p$-block of weight $w$ whose endomorphism algebra is commutative.



\section{Preliminaries on the Brauer correspondence}

In this section we summarize the principal results from \cite{BrouePPerm}
on the Brauer correspondence for $p$-permutation modules. We then
recall some key facts on the blocks of symmetric groups.
For background on vertices, sources and blocks we refer the
reader to~\cite{Alperin}. Throughout
this section let $F$ be a field of prime characteristic $p$.

\subsection*{The Brauer morphism}

Let $G$ be a finite group. 
An $FG$-module $V$ is said to be a \emph{$p$-permutation} module
if whenever~$P$ is a $p$-subgroup of $G$,
there exists an $F$-basis~$\mathcal{B}$ of $V$ whose
elements are permuted by $P$. We say that $\mathcal{B}$
is a \emph{$p$-permutation basis} of $V$ with respect to $P$,
and write $V = \langle \mathcal{B} \rangle$. 
It is easily seen that if $V$ has a $p$-permutation basis with
respect to a Sylow $p$-subgroup $P$ of $G$ then $V$ is a $p$-permutation module.

The following proposition characterizing $p$-permutation
modules is proved in \cite[(0.4)]{BrouePPerm}. As usual, if $V$
and $W$ are $FG$-modules we write
$V \mid W$ to indicate that $V$ is isomorphic to a direct summand of $W$.

\begin{proposition}\label{prop:PPermTrivialSouce}
An indecomposable $FG$-module $V$ is a $p$-permutation module 
if and only if there exists a $p$-subgroup $P \le G$ such that $V \mid 
F\ind_P^G$.
\end{proposition}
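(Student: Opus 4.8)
The plan is to prove the two implications separately; in each direction the workhorse is the orbit decomposition of a permutation module on restriction to a $p$-subgroup, combined with the Krull--Schmidt theorem.

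\emph{The ``if'' direction.} Suppose $V \mid F\ind_P^G$ for some $p$-subgroup $P \le G$. I must exhibit, for each $p$-subgroup $Q \le G$, an $F$-basis of $V$ permuted by $Q$. Writing $F\ind_P^G = F[G/P]$ and splitting $G/P$ into $Q$-orbits, the stabilizer of a coset $xP$ is $Q \cap {}^{x}P$, so $F[G/P]\res_Q \cong \bigoplus_i F\ind_{Q\cap {}^{x_i}P}^{Q}$, a direct sum of transitive permutation modules for the $p$-group $Q$. The one non-formal ingredient is the sub-lemma that each transitive permutation module $F\ind_R^Q$ of a $p$-group $Q$ is indecomposable: since $F$ is the unique simple $FQ$-module, Frobenius reciprocity gives $\dim_F \Hom_{FQ}(F\ind_R^Q, F) = \dim_F \Hom_{FR}(F, F) = 1$, so $F\ind_R^Q$ has simple head and hence is indecomposable. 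Now $V\res_Q \mid F[G/P]\res_Q$, so by Krull--Schmidt $V\res_Q$ is isomorphic to a direct sum of some of the modules $F\ind_{Q\cap {}^{x_i}P}^Q$; transporting the union of the corresponding orbit bases along this isomorphism produces a basis of $V$ permuted by $Q$. As $Q$ was arbitrary, $V$ is a $p$-permutation module. (In fact this argument shows that any direct summand of a permutation module is a $p$-permutation module.)

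\emph{The ``only if'' direction.} Let $V$ be an indecomposable $p$-permutation $FG$-module and fix a Sylow $p$-subgroup $S$ of $G$. Since $[G:S]$ is prime to $p$, the module $V$ is relatively $S$-projective, so $V \mid (V\res_S)\ind_S^G$. By hypothesis $V\res_S$ has an $S$-permutation basis, and grouping this basis into $S$-orbits gives $V\res_S \cong \bigoplus_j F[S/S_j]$ for suitable $p$-subgroups $S_j \le S$. Inducing, $V \mid \bigoplus_j F[S/S_j]\ind_S^G = \bigoplus_j F\ind_{S_j}^G$. As $V$ is indecomposable, Krull--Schmidt forces $V \mid F\ind_{S_j}^G$ for some $j$, with $S_j$ a $p$-subgroup of $G$, as required.

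I expect the proof to be short. The only step requiring any thought is the sub-lemma that a transitive permutation module of a $p$-group over a field of characteristic $p$ is indecomposable; it is the crux, because without it an arbitrary indecomposable summand of $F[G/P]\res_Q$ need not carry a $Q$-invariant basis, and the Krull--Schmidt bookkeeping in the ``if'' direction collapses. The remaining ingredients — the orbit decomposition of a restricted permutation module and relative projectivity with respect to a Sylow $p$-subgroup (Higman's criterion) — are entirely standard.
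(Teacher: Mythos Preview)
Your proof is correct. The paper does not actually prove this proposition: it simply cites Brou\'e \cite[(0.4)]{BrouePPerm} and moves on. So there is no ``paper's own proof'' to compare against in detail; you have supplied what the paper omits.

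A couple of remarks. First, your sub-lemma --- that a transitive permutation module $F\!\ind_R^Q$ of a $p$-group $Q$ is indecomposable --- is exactly what the paper later states (without proof) as Lemma~\ref{lemma:perm}, so you are in good company identifying it as the one non-formal ingredient. Your argument via the simple head is the standard one. Second, your ``if'' direction in fact proves the slightly stronger fact recorded just before the proposition in the paper: to check that a module is $p$-permutation it suffices to produce a permutation basis with respect to a single Sylow $p$-subgroup (since every $p$-subgroup is conjugate into it, and conjugating the basis gives what you need). You could shorten your write-up by invoking this reduction rather than treating an arbitrary $p$-subgroup $Q$ directly, but what you have is fine.
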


Thus
an indecomposable $FG$-module is a $p$-permutation
module if and only if it has trivial source. 
It follows that the restriction or induction of a $p$-permutation module
is still $p$-permutation, as is any summand of a $p$-permutation module.

We now recall the definition of the Brauer correspondence for general
$FG$-modules before specializing to $p$-permutation modules.
Let $V$ be an $FG$-module.
Given a $p$-subgroup $Q\le G$ we let $V^Q$ be the set
$\{v\in V : \text{$vg=v$ for all $g \in Q$}\}$ of $Q$-fixed elements. It is easy to see that $V^Q$ is an 
$FN_G(Q)$-module on which $Q$ acts trivially. 
For $R$ a proper subgroup of $Q$, the \emph{relative trace} map 
$\Tr_R^Q:V^R\rightarrow V^Q$ is the linear map defined by 
\[ \Tr_R^Q(v)=\sum_g vg, \]
where the sum is over a set of right-coset 
representatives for $R$ in $Q$. 
We observe that 
\[ \sum_{R<Q}\Tr_R^Q(V^R) \] 
is an $FN_G(Q)$-module contained in $V^Q$.
The \emph{Brauer correspondent} of $V$ with respect to $Q$
is the $FN_G(Q)$-module $V(Q)$ defined by
\[ V(Q) = V^Q / \sum_{R<Q}\Tr_R^Q(V^R). \] 
It follows immediately from the definition of the Brauer
correspondence that if $U$ is another $FG$-module then
$(U \oplus V)(Q) = U(Q) \oplus V(Q)$.

The following theorem is proved in \cite[3.2(1)]{BrouePPerm}.

\begin{theorem}\label{thm:BrauerCorr}
Let $V$ be an indecomposable $p$-permutation $FG$-module and 
let $Q$ be a vertex of $V$. Let $R$ be a $p$-subgroup of $G$. 
Then $V(R)\neq 0$ if and only if $R\le Q^g$ for some $g\in G$.
\end{theorem}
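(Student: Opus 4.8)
\emph{Proof strategy.} Although this is Broué's theorem \cite[3.2(1)]{BrouePPerm}, I would prove it directly, in the language of $p$-permutation bases used throughout this paper. The plan rests on one ingredient: a description of the Brauer correspondent of a $p$-permutation module. Let $W=\langle\mathcal{B}\rangle$ be an $FG$-module with $\mathcal{B}$ permuted by the $p$-subgroup $R$, and set $\mathcal{B}^R=\{b\in\mathcal{B}:bg=b\text{ for all }g\in R\}$. First I would show $W(R)\cong\langle\mathcal{B}^R\rangle$ as $FN_G(R)$-modules: the module $W^R$ has $F$-basis the orbit sums $\widehat{\mathcal{O}}=\sum_{b\in\mathcal{O}}b$ over the $R$-orbits $\mathcal{O}$ on $\mathcal{B}$, and $\sum_{S<R}\Tr_S^R(W^S)$ is exactly the span of the $\widehat{\mathcal{O}}$ with $|\mathcal{O}|>1$ --- if $\mathcal{O}$ is not a singleton and $S<R$ stabilises a point of it then $\Tr_S^R$ sends that point to $\widehat{\mathcal{O}}$, whereas $\Tr_S^R$ sends an $S$-orbit sum lying over a singleton $R$-orbit to $[R:S]$ times a fixed point, hence to $0$ in characteristic $p$. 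So the images of the singleton orbit sums, i.e.\ of the elements of $\mathcal{B}^R$, give a basis of $W(R)$ permuted by $N_G(R)$.

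Granting this ingredient, the forward implication is short. If $V(R)\ne 0$, then since $V$ has trivial source by Proposition~\ref{prop:PPermTrivialSouce} we have $V\mid F\ind_Q^G$; writing $F\ind_Q^G\cong F[G/Q]=V\oplus C$ and using additivity of the Brauer correspondence, $V(R)\mid F[G/Q](R)$, which by the ingredient above (with $\mathcal{B}$ the coset space $G/Q$) equals $F[(G/Q)^R]$. Hence $(G/Q)^R\ne\emptyset$: some coset $xQ$ satisfies $RxQ=xQ$, so $x^{-1}Rx\le Q$ and $R$ lies in a conjugate of $Q$.

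For the converse I would first reduce to the case $R\le Q$: if $R\le Q^g$ then $R$ is conjugate to a subgroup of $Q$, and for any $h\in G$ right multiplication by $h$ is an $F$-isomorphism $V^R\to V^{R^h}$ carrying $\sum_{S<R}\Tr_S^R(V^S)$ onto the corresponding subspace for $R^h$, so $V(R)=0$ if and only if $V(R^h)=0$. The crux is then to show $V(Q)\ne 0$ for $Q$ a vertex of $V$, which I would do by contradiction. Fix a $Q$-invariant basis $\mathcal{B}$ of $V$ (one exists as $V$ is a $p$-permutation module); if $V(Q)=0$ then $\mathcal{B}^Q=\emptyset$ by the ingredient above, so every $Q$-orbit on $\mathcal{B}$ has size $>1$ and $V\res_Q\cong\bigoplus_{\mathcal{O}}F\ind_{Q_{\mathcal{O}}}^Q$ with each point stabiliser $Q_{\mathcal{O}}$ proper in $Q$. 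But $V$ is relatively $Q$-projective (as $Q$ is a vertex), so $V\mid(V\res_Q)\ind_Q^G\cong\bigoplus_{\mathcal{O}}F\ind_{Q_{\mathcal{O}}}^G$, and indecomposability would give $V\mid F\ind_{Q_{\mathcal{O}}}^G$ for some $\mathcal{O}$, contradicting that every vertex of $V$ has order $|Q|>|Q_{\mathcal{O}}|$. With $V(Q)\ne 0$ in hand, and $\mathcal{B}$ also $R$-invariant since $R\le Q$, the inclusion $\mathcal{B}^Q\subseteq\mathcal{B}^R$ and the ingredient above give $\dim V(R)=|\mathcal{B}^R|\ge|\mathcal{B}^Q|\ge 1$, completing the proof.

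The orbit-sum ingredient of the first paragraph and the standard facts about vertices and relative projectivity used above are routine (for the latter, \cite{Alperin} suffices). The step I expect to be the main obstacle is the proof that $V(Q)\ne 0$ when $Q$ is a vertex of $V$: it is the one point at which the minimality built into the notion of a vertex must be exploited, here by turning the vanishing of $V(Q)$ into a restriction that is induced from proper subgroups of $Q$ and then re-inducing to reach a contradiction.
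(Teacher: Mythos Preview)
Your argument is correct, and there is no paper proof to compare against: the paper simply cites Brou\'e \cite[3.2(1)]{BrouePPerm} and moves on. The orbit-sum ingredient you isolate is exactly the computation the paper records (independently of Theorem~\ref{thm:BrauerCorr}) in the paragraph immediately before Corollary~\ref{cor:Brauer}, so your proof slots cleanly into the surrounding text and in fact makes the paper more self-contained.

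One small point of precision in the write-up of the ingredient: with $\mathcal{B}$ assumed only to be $R$-invariant, the subspace $\langle\mathcal{B}^R\rangle\subseteq W$ need not be $N_G(R)$-stable, and the images of the elements of $\mathcal{B}^R$ in $W(R)$ need not be permuted by $N_G(R)$. What you actually prove (and all you use) is that these images form an $F$-basis of $W(R)$; the $N_G(R)$-module structure on $W(R)$ comes from the quotient description, not from $\mathcal{B}$. This does not affect the proof of the theorem, since in the forward direction your basis $G/Q$ is permuted by all of $G$, and in the converse you only need the dimension count $|\mathcal{B}^R|\ge|\mathcal{B}^Q|\ge 1$.
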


If $V$ is an $FG$-module with $p$-permutation basis $\mathcal{B}$ with respect 
to a Sylow $p$-subgroup $P$ of $G$ and $R \le P$, then 
taking for each orbit of $R$ on $\mathcal{B}$ the sum
of the vectors in that orbit, we obtain a basis for $V^R$.
The sums over vectors lying in orbits of size $p$ or more
are relative traces from proper subgroups of~$R$, and so $V(R)$ is equal to the $F$-span of
\[ \mathcal{B}^R = \{ v \in \mathcal{B} : \text{$vg = v$ for all $g\in R$} \}. \]
Thus Theorem~\ref{thm:BrauerCorr} has the following
corollary, which we shall use throughout~\S 4. 

\begin{corollary}\label{cor:Brauer}
Let $V$ be a $p$-permutation $FG$-module
with 
$p$-permu\-tation basis
$\mathcal{B}$ with respect to a Sylow $p$-subgroup $P$ of $G$.
Let $R \le P$. The $FN_G(R)$-module $V(R)$ is equal to 
$\langle \mathcal{B}^R \rangle$ 
and $V$ has an indecomposable summand with a
vertex containing~$R$ if and only if $\mathcal{B}^R \not= \varnothing$.
\end{corollary}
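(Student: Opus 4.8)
The plan is to combine the explicit description of the Brauer quotient in terms of the $p$-permutation basis with Theorem~\ref{thm:BrauerCorr} applied to each indecomposable summand of $V$ separately; the first ingredient is essentially the computation carried out in the paragraph preceding the corollary, so I would begin by making that precise. Since $R \le P$ and $P$ permutes $\mathcal{B}$, the group $R$ permutes $\mathcal{B}$, and the sums of the vectors in the $R$-orbits on $\mathcal{B}$ form an $F$-basis of $V^R$. Because $R$ is a $p$-group, every orbit of size greater than $1$ has size divisible by $p$; if such an orbit $O$ has representative $v$ and stabiliser $S = \mathrm{Stab}_R(v) < R$, then its orbit sum equals $\Tr_S^R(v)$, so the span of the non-trivial orbit sums is contained in $\sum_{S < R}\Tr_S^R(V^S)$. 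For the reverse inclusion I would take $S < R$ and an $S$-orbit sum $w$, note $w = \Tr_{S_0}^S(v)$ for a representative $v$ with $S_0 = \mathrm{Stab}_S(v)$, and use transitivity of relative traces to rewrite $\Tr_S^R(w) = \Tr_{S_0}^R(v) = [\,\mathrm{Stab}_R(v) : S_0\,]\cdot(\text{sum over the $R$-orbit of $v$})$; when that $R$-orbit is trivial the index is $[R:S] \equiv 0 \bmod p$, and otherwise we land in the span of the non-trivial $R$-orbit sums. Hence $\sum_{S<R}\Tr_S^R(V^S)$ is exactly the span of the non-trivial orbit sums, the images of the vectors of $\mathcal{B}^R$ form a basis of the quotient $V(R) = V^R/\sum_{S<R}\Tr_S^R(V^S)$, and identifying $V(R)$ with this complementary span yields $V(R) = \langle \mathcal{B}^R\rangle$ as $FN_G(R)$-modules (the module structure being transported along this identification, which is legitimate since $\sum_{S<R}\Tr_S^R(V^S)$ is $N_G(R)$-stable).

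For the equivalence I would decompose $V = \bigoplus_i V_i$ into indecomposable $FG$-summands; each $V_i$ is a $p$-permutation module, being a summand of one, so has a vertex $Q_i$, well defined up to $G$-conjugacy. Additivity of the Brauer construction gives $V(R) = \bigoplus_i V_i(R)$, so $\langle\mathcal{B}^R\rangle = V(R) \ne 0$ if and only if $V_i(R) \ne 0$ for some $i$. By Theorem~\ref{thm:BrauerCorr}, $V_i(R) \ne 0$ precisely when $R \le Q_i^g$ for some $g \in G$; since $Q_i^g$ is again a vertex of $V_i$, and conversely every vertex of $V_i$ is of this form, this says exactly that $V_i$ has a vertex containing $R$. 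Stringing these equivalences together, $\mathcal{B}^R \ne \varnothing \iff \langle\mathcal{B}^R\rangle \ne 0 \iff V(R)\ne 0 \iff$ some indecomposable summand of $V$ has a vertex containing $R$, which is the assertion of the corollary.

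There is no serious obstacle here: the corollary is a formal consequence of Theorem~\ref{thm:BrauerCorr} together with additivity of $V \mapsto V(R)$. The only step requiring a little care is the identification $V(R) = \langle\mathcal{B}^R\rangle$ — specifically the claim that $\sum_{S<R}\Tr_S^R(V^S)$ is spanned by the non-trivial $R$-orbit sums rather than merely containing them — and it is in the vanishing of the index $[R:S] \bmod p$ that the hypotheses $\mathrm{char}\,F = p$ and \emph{$R$ a $p$-subgroup} are used. Once this is in hand the remaining argument is purely formal.
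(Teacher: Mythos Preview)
Your proposal is correct and follows exactly the route the paper indicates: the paper does not give a formal proof but simply says the corollary follows from Theorem~\ref{thm:BrauerCorr} together with the computation in the preceding paragraph, and your argument is precisely a careful unpacking of those two ingredients. The only addition you make beyond the paper's sketch is the explicit verification that $\sum_{S<R}\Tr_S^R(V^S)$ equals (rather than merely contains) the span of the non-trivial $R$-orbit sums, which the paper leaves implicit; your treatment of this is fine.
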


The following proposition
shows that the Brauer correspondent of a $p$-permutation module is again
a $p$-permutation module.
This remark will be crucial in the proof of Theorem \ref{thm:vertices}.

\begin{proposition}
Let $U$ be a $p$-permutation $FG$-module and let $R$ be a $p$-subgroup of $G$. The Brauer correspondent $U(R)$ of $U$ is a $p$-permutation
$FN_G(R)$-module.
\end{proposition}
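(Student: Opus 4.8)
The plan is to exploit the structural description of $p$-permutation modules provided by Proposition~\ref{prop:PPermTrivialSouce} together with the facts, recorded just after it, that induction, restriction, and taking summands all preserve the $p$-permutation property. It suffices to treat the case where $U$ is indecomposable, since the Brauer construction is additive on direct sums: if $U = \bigoplus_i U_i$ then $U(R) = \bigoplus_i U_i(R)$, and a direct sum of $p$-permutation modules is again a $p$-permutation module. So assume $U$ is indecomposable with vertex $Q$ and trivial source, i.e.\ $U \mid F\!\uparrow_Q^G$.

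First I would reduce to the permutation module $F\!\uparrow_Q^G$ itself. Since $U \mid F\!\uparrow_Q^G$, additivity of the Brauer correspondence gives $U(R) \mid (F\!\uparrow_Q^G)(R)$, so it is enough to show that $(F\!\uparrow_Q^G)(R)$ is a $p$-permutation $FN_G(R)$-module; any summand of a $p$-permutation module is again $p$-permutation. Now $F\!\uparrow_Q^G$ is an honest permutation module, with permutation basis the cosets $G/Q$ on which $G$ (hence $R$, hence $N_G(R)$) acts. For a permutation module $F\Omega$ it is classical (and essentially the computation already carried out in the paragraph preceding Corollary~\ref{cor:Brauer}) that $(F\Omega)(R)$ has as an $F$-basis the set of $R$-fixed points $\Omega^R$, on which $N_G(R)$ acts by permutations; the relative traces $\Tr_S^R$ from proper subgroups $S < R$ span exactly the images of the orbit sums of size divisible by $p$, leaving the fixed points as a transversal. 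Hence $(F\!\uparrow_Q^G)(R) \cong F\Omega^R$ as $FN_G(R)$-modules, which is manifestly a $p$-permutation module (indeed a genuine permutation module).

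Putting these together: $U(R)$ is a direct summand of $(F\!\uparrow_Q^G)(R) \cong F\Omega^R$, a permutation $FN_G(R)$-module, and any summand of a $p$-permutation module is $p$-permutation by the remark following Proposition~\ref{prop:PPermTrivialSouce}. This proves the proposition.

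I do not expect a serious obstacle here; the only point requiring a little care is the identification of $(F\Omega)(R)$ with $F\Omega^R$, i.e.\ verifying that the submodule $\sum_{S<R}\Tr_S^R((F\Omega)^S)$ is precisely the span of the orbit sums over $R$-orbits of size at least $p$ (equivalently, of $p$-power size greater than one). This is the standard argument that $(F\Omega)^R$ decomposes as the span of all $R$-orbit sums and that an orbit sum is a relative trace from the stabilizer of a point in that orbit precisely when the orbit is nontrivial; it is exactly the reasoning used to derive Corollary~\ref{cor:Brauer}, so it can be invoked rather than reproved. An alternative, slightly slicker route avoiding even this is to note that $F\!\uparrow_Q^G \res_{N_G(R)}$ is, by the Mackey formula, a direct sum of modules of the form $F\!\uparrow_{N_G(R)\cap Q^g}^{N_G(R)}$, and to compute the Brauer correspondent summand-by-summand using that $(F\!\uparrow_H^{N_G(R)})(R) = 0$ unless $R \le H$, in which case it is again an induced trivial module; but the fixed-point description is cleaner to state.
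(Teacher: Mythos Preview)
Your argument is correct, but it takes a different route from the paper's. The paper proves the proposition by directly exhibiting a $p$-permutation basis for $U(R)$: choose a Sylow $p$-subgroup $P'$ of $N_G(R)$, extend it to a Sylow $p$-subgroup $P$ of $G$, take a $p$-permutation basis $\mathcal{B}$ of $U$ with respect to $P$, and observe (via Corollary~\ref{cor:Brauer}) that $\mathcal{B}^R$ is a basis of $U(R)$ which is permuted by $P'$ since $P' \le N_G(R)$. Your approach instead passes through the trivial-source characterization of Proposition~\ref{prop:PPermTrivialSouce}, reducing to a genuine permutation module and invoking closure under summands. Both are short and valid; the paper's version has the advantage of yielding the explicit basis $\mathcal{B}^R$, which is exactly what is needed later in Lemma~\ref{lemma:doubleBrauer} to compare $U(Q)$ with $\bigl(U(R)\bigr)(Q)$, whereas your argument is more structural and would require reproving that basis description if one wanted to use it downstream.
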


\begin{proof}
Let $P'$ be a Sylow $p$-subgroup of $N_G(R)$ and let $P$ be a Sylow $p$-subgroup of $G$ containing $P'$. 
Let $\mathcal{B}$ be 
a $p$-permutation basis of $U$ with respect to $P$. By Corollary \ref{cor:Brauer} 
the $FN_G(R)$-module
$U(R)$ has $\mathcal{B}^R$ as a basis. 
Since $R \le N_G(P')$, it follows 
that $\mathcal{B}^R$ is a $p$-permutation basis of $U(R)$ with respect to $P'$.
Therefore $U(R)$ is a $p$-permutation $FN_G(R)$-module.
\end{proof}

Remarkably, the Brauer correspondent of an indecomposable $p$-permut\-ation module
with respect to its vertex 
agrees with its Green correspondent. This is proved in 
\cite[Exercise~27.4]{Thevenaz}. We therefore have
the following theorem (see \cite[3.6]{BrouePPerm}).

\begin{theorem}\label{thm:BrauerBij}
The Brauer correspondence sending $V$ to $V(Q)$ is a
bijection between the set of indecomposable $p$-permutation 
$FG$-modules with vertex $Q$ and the set of indecomposable 
projective $FN_G(Q)/Q$-modules. The $FN_G(Q)$-module $V(Q)$ is the
Green correspondent of $V$.
\end{theorem}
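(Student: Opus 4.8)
The plan is to show that the assignment $V \mapsto V(Q)$ coincides with the Green correspondence, and then to identify the relevant $FN_G(Q)$-modules with the projective $F[N_G(Q)/Q]$-modules. Write $H = N_G(Q)$. First I would observe that the Brauer construction at $Q$ depends only on the restriction to $H$: for any $FG$-module $V$ the space $V^Q$ and the subspaces $\Tr_R^Q(V^R)$ for $R < Q$ involve only the action of $Q$, while $H$ acts on $V^Q$ and merely permutes these subspaces (an element $n \in H$ carries $\Tr^Q_R(V^R)$ to $\Tr^Q_{n^{-1}Rn}(V^{n^{-1}Rn})$), so that $V(Q) = (V\res_H)(Q)$ as $FH$-modules.

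Next I would decompose $V\res_H$ by the Green correspondence as $f(V) \oplus X$, where $f(V)$ is the Green correspondent of $V$, indecomposable with vertex $Q$, and every indecomposable summand of $X$ has vertex contained in some $Q \cap {}^gQ$ with $g \in G \setminus H$, hence of order strictly less than $|Q|$. Since restriction preserves the $p$-permutation property, each such summand $M$ is a $p$-permutation $FH$-module whose vertex is too small to be $H$-subconjugate to $Q$, so $M(Q) = 0$ by Theorem~\ref{thm:BrauerCorr}; by additivity of the Brauer construction, $V(Q) \cong f(V)(Q)$. The point of passing to $H$ is that $Q \trianglelefteq H$. Any $p$-permutation $FH$-module with vertex $Q$ is a summand of $F\ind_Q^H$, and $Q$ acts trivially on $F\ind_Q^H$ (for $q \in Q$ and a coset representative $t$ one has $q\cdot(t\otimes 1) = t\cdot(t^{-1}qt)\otimes 1 = t\otimes 1$). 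Hence $Q$ acts trivially on $f(V)$, so $f(V)^Q = f(V)$, while for $R < Q$ one has $\Tr_R^Q(x) = [Q:R]\,x = 0$ in $F$; therefore $f(V)(Q) = f(V)$, giving $V(Q) \cong f(V)$, which is the asserted identification of $V(Q)$ with the Green correspondent.

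It remains to recognize the target. The module $F\ind_Q^H$ is the inflation of the regular $F[H/Q]$-module, so $f(V)$ is the inflation of an indecomposable (inflation being fully faithful on modules with $Q$ in the kernel) projective $F[H/Q]$-module; viewing $V(Q)$ through this inflation gives the projective $F[N_G(Q)/Q]$-module in the statement. Finally, the Green correspondence is a bijection between indecomposable $FG$-modules with vertex $Q$ and indecomposable $FH$-modules with vertex $Q$, and it restricts to a bijection on the $p$-permutation ones (forwards since $f(V) \mid V\res_H$, backwards since $f^{-1}(M) \mid M\ind_H^G$, using that both restriction and induction preserve the $p$-permutation property); composing with the inflation bijection between indecomposable $p$-permutation $FH$-modules with vertex $Q$ and indecomposable projective $F[H/Q]$-modules — whose surjectivity uses that an inflated projective $\tilde P$ satisfies $\tilde P(Q) = \tilde P \neq 0$, forcing vertex $Q$ by Theorem~\ref{thm:BrauerCorr} — yields the desired bijection.

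The step I expect to be the main obstacle is the reduction to the Green correspondent: one must check both that the non-correspondent summands of $V\res_H$ genuinely have vertices of order smaller than $|Q|$ (so that their Brauer constructions at $Q$ vanish) and that the Green correspondence is compatible with the $p$-permutation property in both directions. The remaining computations — the triviality of the $Q$-action on $F\ind_Q^H$, the vanishing of the relative traces in characteristic $p$, and the identification of summands of $F\ind_Q^H$ with inflated projectives — are routine. One could instead argue throughout with the explicit $p$-permutation bases furnished by Corollary~\ref{cor:Brauer}, but the route via the Green correspondence involves less bookkeeping.
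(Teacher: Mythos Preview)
Your proof is correct, but you should know that the paper does not actually prove this theorem: it states the result with references to Brou\'e \cite[3.6]{BrouePPerm} and to \cite[Exercise~27.4]{Thevenaz} for the identification of $V(Q)$ with the Green correspondent, and then moves on. So there is nothing in the paper to compare your argument against.

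That said, the route you take is the standard one, and each step checks out. The only place I would tighten the exposition is your handling of the non-correspondent summands $M$ of $V\!\res_H$: you say their vertices lie in some $Q \cap {}^gQ$ with $g \notin H$, hence have order strictly less than $|Q|$, and then invoke Theorem~\ref{thm:BrauerCorr} to get $M(Q)=0$. This is fine, but note that Theorem~\ref{thm:BrauerCorr} is being applied to $M$ as an $FH$-module, so the relevant conjugacy is in $H$; since no $H$-conjugate of $Q$ can sit inside a group of smaller order, the conclusion follows. Likewise, in the final paragraph the surjectivity of the inflation bijection is slightly overargued: once you know $\tilde P \mid F\!\ind_Q^H$ you already have that a vertex of $\tilde P$ is contained in $Q$, and combined with $\tilde P(Q)=\tilde P\neq 0$ giving a vertex containing $Q$, you get vertex exactly $Q$. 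These are cosmetic points; the argument is sound.
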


\subsection*{Blocks of the symmetric group}

The blocks of symmetric groups are described
combinatorially
by Naka\-yama's Conjecture, first proved by
Brauer and Robinson
in  two connected papers \cite{RobinsonNakayama} and \cite{BrauerNakayama}.
In order to state this result, we must 
recall some definitions. 

Let $\lambda$ be a partition. 
A \emph{$p$-hook} in $\lambda$ is a connected part of the 
rim of the Young diagram of $\lambda$ consisting of exactly $p$ boxes,
whose removal leaves the diagram of a partition. 
By repeatedly removing $p$-hooks from~$\lambda$
we obtain the \emph{$p$-core} of $\lambda$; the number
of hooks we remove is the \emph{weight} of $\lambda$.

\begin{theorem}[Nakayama's Conjecture]\label{thm:Nakayama} 
Let $p$ be prime.
The $p$-blocks of  $S_n$ are labelled by pairs $(\gamma, w)$, where
$\gamma$ is a $p$-core and $w \in \mathbf{N}_0$ is the associated weight,
such that $\left|\gamma\right| + wp = n$.
Thus the Specht module $S^\lambda$ lies
in the block labelled by $(\gamma, w)$ if and only if $\lambda$
has $p$-core~$\gamma$ and weight $w$.\hfill$\Box$
\end{theorem}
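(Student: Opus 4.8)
The plan is to prove this by comparing central characters. Write $\omega_{\lambda}(\widehat{C}) = |C|\,\chi^{\lambda}(g_{C})/\chi^{\lambda}(1)$ for the value on the class sum $\widehat{C}$ (with representative $g_{C}$) of the central character attached to $\chi^{\lambda}$; this is an algebraic integer, and, being rational, it lies in $\Z$. The basic criterion is that $S^{\lambda}$ and $S^{\mu}$ lie in the same $p$-block if and only if $\omega_{\lambda}(\widehat{C}) \equiv \omega_{\mu}(\widehat{C}) \pmod{p}$ for every conjugacy class $C$; and, by Osima's theorem that the primitive central idempotents of $\F_{p}S_n$ are supported on $p$-regular classes, it suffices to test this on the classes of $p$-regular permutations, that is, those all of whose cycle lengths are coprime to $p$. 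So the theorem will follow once we show: (a) the residue modulo $p$ of $\omega_{\lambda}(\widehat{C})$, for $C$ running over $p$-regular classes, depends only on the $p$-core of $\lambda$; and (b) distinct $p$-cores yield distinct such data.

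The engine for both statements is the Murnaghan--Nakayama rule, with the $p$-abacus supplying the bookkeeping. First I would record the divisibility facts that control the denominators: the $p$-core of $\lambda$ does not depend on the order in which rim $p$-hooks are stripped off (so it is genuinely an invariant, readable from the abacus display), and, by the hook-length formula, $v_{p}\bigl(\chi^{\lambda}(1)\bigr) = v_{p}(n!) - \sum_{x}v_{p}\bigl(h_{\lambda}(x)\bigr)$, where $x$ runs over the cells of $\lambda$; here precisely $w$ cells of $\lambda$ have hook length divisible by $p$, and the finer valuations are governed by the $p$-quotient. With these in hand, given a $p$-regular class $C$ of cycle type $\rho$, I would iterate the Murnaghan--Nakayama rule, peeling off the parts of $\rho$ one at a time. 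Since no part of $\rho$ is divisible by $p$, each peeling step perturbs the $p$-core only in a combinatorially controlled way, and organizing the resulting signed sum of products of symmetric-group degrees through the $p$-quotient shows that, after multiplication by $|C|$ and division by $\chi^{\lambda}(1)$, the residue modulo $p$ is a function of the $p$-core alone. This proves (a), hence ``same $p$-core $\Rightarrow$ same block''; it also shows that every $p$-block of $S_n$ is a $B_{(\gamma,w)}$ with $|\gamma| + wp = n$, since such a pair always supports a partition (add $w$ rim $p$-hooks to $\gamma$ in generic position, or raise $w$ beads on the abacus of $\gamma$).

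For (b) I would evaluate central characters on the class $K_{j}$ of permutations of cycle type $(p^{j}, 1^{n - jp})$. The Murnaghan--Nakayama rule gives $\chi^{\lambda}(g_{K_{j}}) = 0$ whenever $\lambda$ has weight less than $j$, and, when $\lambda$ has weight exactly $w$ and $p$-core $\gamma$, it gives $\chi^{\lambda}(g_{K_{w}}) = \pm\,\delta_{p}(\lambda)\,\chi^{\gamma}(1)$, where $\delta_{p}(\lambda)$ is a positive integer counting the ordered rim-$p$-hook removal sequences from $\lambda$ down to $\gamma$ (all of which have total leg length of the same parity). A valuation count using the two facts above shows that $\omega_{\lambda}(\widehat{K_{w}})$ is a $p$-local unit whose residue modulo $p$ is determined by $\gamma$ and $w$ but not by the particular partition of that core and weight. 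Hence two partitions of $n$ with different $p$-cores are separated either at the first $j$ at which their weights differ, or --- when the weights agree --- by comparing the whole family of residues $\omega_{\lambda}(\widehat{K_{j}}) \bmod p$, $j \ge 0$, from which the core can be recovered. Combining (a) and (b) gives the asserted bijection.

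The main obstacle is the middle paragraph: turning the iterated Murnaghan--Nakayama rule into clean congruences demands careful control of the $p$-adic valuations of $\chi^{\lambda}(1)$, of $|C|$, and of the partial sums occurring along the way --- this is exactly where the abacus and the multiplicativity of the $p$-quotient earn their keep --- and it is the point at which the original argument divides into the companion papers of Robinson~\cite{RobinsonNakayama} and Brauer~\cite{BrauerNakayama}. I would expect ``same $p$-core $\Rightarrow$ same block'' to be the more delicate half to write out, with the distinctness statement comparatively routine once the character values on the classes $K_{j}$ are known.
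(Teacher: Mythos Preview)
The paper gives no proof of this theorem: it is stated with a terminal $\Box$ and attributed to Brauer and Robinson \cite{BrauerNakayama,RobinsonNakayama}, then used as a black box. There is therefore nothing in the paper to compare your proposal against.

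On its own merits, your outline follows the classical Brauer--Robinson strategy (central characters together with the Murnaghan--Nakayama rule), and the architecture is sound. You already flag step~(a) as the crux and as not worked out; that is accurate. Step~(b) is also thinner than it looks. The classes $K_j$ you use are not $p$-regular, so they sit outside your Osima reduction; that is permissible for \emph{separating} blocks, but the assertion that $\omega_{\lambda}(\widehat{K_w}) \bmod p$ depends only on $(\gamma,w)$ is itself a nontrivial congruence of exactly the kind~(a) is meant to supply, and you have not established~(a) for non-$p$-regular classes. Note too that distinct $p$-cores of partitions of the same $n$ may have equal weight (for $n=6$, $p=3$, the cores $(3)$, $(2,1)$, $(1,1,1)$ all have weight~$1$), so the final clause --- recovering $\gamma$ from the family of residues $\omega_{\lambda}(\widehat{K_j}) \bmod p$ --- is carrying real content that is not spelled out.
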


We denote the block of weight $w$ corresponding to the $p$-core $\gamma$
by $B(\gamma, w)$.
The following description of 
the Brauer correspondence for blocks of symmetric groups
is critical to the proof of Proposition~\ref{prop:allproj} below.

\begin{theorem}\label{thm:BrauerBlocks}
Let $V$ be
an indecomposable $p$-permutation module lying in 
the block $B(\gamma, w)$ of $S_n$. Suppose 
that $R$ is contained in a vertex of $V$ and that
$R$ moves exactly the first~$rp$ elements of 
$\{1,\ldots,n\}$; that is~$\supp(R) = \{1,\ldots,rp\}$.
Then $N_{S_n}(R) \cong N_{S_{rp}}(R) \times S_{n-rp}$. Moreover,

\begin{thmlist}
\item $N_{S_{rp}}(R)$ has a unique block, $b$ say.

\item The blocks $b \otimes B(\gamma,w-r)$ and $B(\gamma,w)$ are Brauer correspondents.

\item As an $FN_{S_n}(R)$-module, $V(R)$ lies 
in $b \otimes B(\gamma,w-r)$. 
\end{thmlist}
\end{theorem}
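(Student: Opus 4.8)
The plan is to establish the three assertions in sequence, relying on the hypothesis $\supp(R)=\{1,\ldots,rp\}$ to decompose the normaliser and then invoking Nakayama's Conjecture to track blocks through the Brauer correspondence. First I would prove the structural statement $N_{S_n}(R)\cong N_{S_{rp}}(R)\times S_{n-rp}$. Since $R$ fixes every point in $\{rp+1,\ldots,n\}$ and acts on $\{1,\ldots,rp\}$, any element of $S_n$ normalising $R$ must preserve the set of fixed points of $R$ and the set of moved points (these are canonically associated to $R$), so it decomposes as a product of a permutation of $\{1,\ldots,rp\}$ normalising $R$ and an arbitrary permutation of $\{rp+1,\ldots,n\}$; conversely every such product normalises $R$. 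This gives the claimed internal direct product.

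Next, for part~(i), I would argue that $N_{S_{rp}}(R)$ has a normal Sylow $p$-subgroup, whence it has a unique block. The key point is that $R$ itself, being a transitive-on-its-support $p$-group moving exactly $rp$ points — more precisely, $R$ must be (up to conjugacy in $S_{rp}$) a Sylow $p$-subgroup of a product of $r$ copies of a cyclic group of order $p$, i.e.\ $R$ has order at least $p^r$ and, because it is contained in a vertex of a $p$-permutation module in a weight-$w$ block of $S_n$ with $rp$ moved points, $R$ is conjugate to a subgroup whose normaliser quotient $N_{S_{rp}}(R)/R$ is a $p'$-group times something controlled — the cleanest route is to note that any $p$-subgroup of $S_{rp}$ with no fixed points on $\{1,\ldots,rp\}$ has a normaliser in which $O_p(N_{S_{rp}}(R)) \supseteq R$ contains its own centraliser modulo the relevant structure, so $N_{S_{rp}}(R)$ has a normal $p$-subgroup with $p'$ quotient acting, forcing a single block by Brauer's theory of blocks with normal defect group. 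I should be a little careful here: the statement only needs that $R$ lies in a vertex of a $p$-permutation module in $B(\gamma,w)$, and the relevant fact, which can be extracted from the literature on $p$-permutation modules for symmetric groups (Broué, and the explicit analysis of vertices via Sylow subgroups of Young subgroups), is that such $R$ has no fixed points on its support precisely when $\supp(R)=\{1,\ldots,rp\}$, and then $N_{S_{rp}}(R)$ is $p$-nilpotent.

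For parts~(ii) and~(iii), I would use Theorem~\ref{thm:BrauerBij} together with Nakayama's Conjecture. Let $b$ be the unique block of $N_{S_{rp}}(R)$. The block idempotents of $N_{S_n}(R)=N_{S_{rp}}(R)\times S_{n-rp}$ are the products $b\otimes B(\delta,w')$ over $p$-cores $\delta$ with $|\delta|+w'p = n-rp$. Brauer's first main theorem, in the form relating the block of $FS_n$ with defect group containing $R$ to blocks of $FN_{S_n}(R)$, sends $B(\gamma,w)$ to a sum of blocks of $N_{S_n}(R)$; I would identify which one by a weight/core count. Since $|\supp(R)| = rp$, passing to the normaliser removes exactly weight $r$ worth of the defect, and the $p$-core is unchanged because removing $p$-hooks corresponding to the support of $R$ does not alter the core — this is exactly the combinatorial content of Nakayama's Conjecture combined with the fact that $R$ acts freely on its support. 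Hence the Brauer correspondent of $B(\gamma,w)$ is $b\otimes B(\gamma,w-r)$, giving~(ii). Finally~(iii) follows because $V(R)$ is the Brauer correspondent of the indecomposable $p$-permutation module $V\in B(\gamma,w)$, and Brauer correspondence of modules respects Brauer correspondence of blocks: $V(R)$ lies in the block of $N_{S_n}(R)$ that corresponds to the block of $V$, namely $b\otimes B(\gamma,w-r)$.

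The main obstacle I anticipate is part~(i): cleanly proving that $N_{S_{rp}}(R)$ has a unique block requires knowing that $R$ has no fixed points on $\{1,\ldots,rp\}$, which in turn needs the precise relationship between the hypothesis $\supp(R)=\{1,\ldots,rp\}$ and the structure forced on $R$ by being inside a vertex of a $p$-permutation module in a weight-$w$ block. If $R$ could have a fixed point in its notional support the count in~(ii) would also be thrown off, so this point is load-bearing for the whole theorem. I would handle it by first reducing to the case where $R$ is (conjugate to) a subgroup acting without fixed points — using that the block $B(\gamma,w)$ has a defect group that is a Sylow $p$-subgroup of $S_{wp}$ embedded on $\{1,\ldots,wp\}$, so any $R$ inside such a vertex that moves exactly $rp$ points must, after conjugation, act freely on those $rp$ points — and then citing or reproving that a fixed-point-free $p$-subgroup of $S_{rp}$ has $p$-nilpotent normaliser, hence a single block.
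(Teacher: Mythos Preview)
The paper does not prove this theorem at all: it simply cites Brou\'e \cite{BroueBlocks} for (i) and (ii) and \cite{MWvertices} for (iii). So you are attempting something more ambitious than the paper, and your outline for the normaliser factorisation and for part~(ii) via Brauer's first main theorem plus Nakayama is broadly reasonable. There are, however, two genuine gaps.

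For part~(i), both structural claims you make are false. Take $p=3$, $r=3$ and $R=\langle (1,2,3),(4,5,6),(7,8,9)\rangle$: then $N_{S_9}(R)\cong S_3\wr S_3$, which is neither $3$-nilpotent nor has a normal Sylow $3$-subgroup (the quotient by $R$ is $C_2\wr S_3$, and already $C_2\wr C_3$ has non-normal Sylow $3$-subgroup). The uniqueness of the block is nonetheless true, but the correct mechanism is that $R$ itself (or more generally $O_p(N_{S_{rp}}(R))$) is a normal $p$-subgroup that contains its own centraliser in $N_{S_{rp}}(R)$; this forces a single block by the standard criterion. Your worry about whether $R$ has fixed points on $\{1,\ldots,rp\}$ is misplaced: the hypothesis $\supp(R)=\{1,\ldots,rp\}$ \emph{means} precisely that $R$ moves every one of those points, so there is nothing to extract from the vertex hypothesis here.

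For part~(iii), your argument invokes Theorem~\ref{thm:BrauerBij}, but that bijection applies only when $R$ is \emph{equal to} a vertex of $V$, whereas here $R$ is merely contained in one. In that situation $V(R)$ is typically decomposable and is not the Green correspondent of $V$, so you cannot simply read off its block from the module-level Brauer correspondence. What is needed is the compatibility of the Brauer construction with block idempotents via the Brauer homomorphism $\mathrm{br}_R:(FG)^R\to FN_G(R)$, showing that $e_{B(\gamma,w)}$ is sent to the idempotent of $b\otimes B(\gamma,w-r)$; this is exactly what the paper imports from \cite[Lemma~7.4]{MWvertices}.
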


\begin{proof}
Part (i) is an immediate corollary of Lemma 2.6 and the following
sentence of \cite{BroueBlocks}. Part (ii)
is stated in (2) on page 166 of \cite{BroueBlocks},
and then proved as a corollary of the characterisation of 
maximal Brauer pairs
given in Proposition 2.12 of \cite{BroueBlocks}. Part (iii)
follows from Lemma 7.4 
of \cite{MWvertices}.
\end{proof}

%

\section{Foulkes modules and twisted Foulkes modules}
Throughout this
section let $F$ be a field and let $m \in \N$, $k \in \N_0$. 
We define~$\Omega^{(2^m)}$ to be the collection of all 
set partitions of $\{1,\ldots,2m\}$ into~$m$ sets each of size two. 
The symmetric group~$S_{2m}$ acts
on $\Omega^{(2^m)}$ in an obvious way. We have already defined the Foulkes module
$H^{(2^m)}$ to be the permutation module 
with $F$-basis $\Omega^{(2^m)}$, and the twisted Foulkes module
$H^{(2^m\; k)}$ to be
$\bigl( H^{(2^m)} \boxtimes \sgn_{S_k} \bigr) \ind^{S_{2m+k}}_{S_{2m} \times S_k}$.

Let $\chi^\lambda$ denote the irreducible character of $S_n$ corresponding
to the partition $\lambda$ of $n$.
When $F$ has characteristic zero the ordinary character of $H^{(2^m)}$ was
found by Thrall \cite[Theorem~III]{Thrall} to be $\sum_\mu \chi^{2\mu}$
where the sum is over all partitions~$\mu$ of $m$ and $2\mu$ is the partition
obtained from~$\mu$ by doubling  each part. 

\begin{lemma}\label{lemma:FoulkesChars}
The ordinary character 
of $H^{(2^m \; k)}$ is $\sum_\lambda \chi^\lambda$, where
the sum is over all partitions $\lambda$ of $2m+k$ with exactly $k$ odd parts.
\end{lemma}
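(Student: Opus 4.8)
The plan is to compute the ordinary character of $H^{(2^m\,;\,k)}$ by combining Thrall's formula for the character of the Foulkes module $H^{(2^m)}$ with the classical Littlewood--Richardson rule governing induction from a Young subgroup, and then to simplify the resulting sum using the theory of $2$-quotients (or equivalently, the combinatorics of adding horizontal strips and conjugation). Concretely, since $H^{(2^m)}$ has character $\sum_{\mu \vdash m} \chi^{2\mu}$ and $\sgn_{S_k} = \chi^{(1^k)}$, the character of $H^{(2^m\,;\,k)}$ is
\[
\Bigl( \sum_{\mu \vdash m} \chi^{2\mu} \boxtimes \chi^{(1^k)} \Bigr) \ind^{S_{2m+k}}_{S_{2m}\times S_k} = \sum_{\mu \vdash m} \sum_{\lambda \vdash 2m+k} c^{\lambda}_{2\mu,\,(1^k)}\, \chi^{\lambda},
\]
where $c^{\lambda}_{2\mu,(1^k)}$ is a Littlewood--Richardson coefficient. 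Because the second partition is a single column $(1^k)$, the Pieri rule (for columns) says $c^{\lambda}_{2\mu,(1^k)} \in \{0,1\}$, and it equals $1$ precisely when $\lambda/2\mu$ is a vertical strip of size $k$, i.e.\ when $2\mu \subseteq \lambda$ and $\lambda$ and $2\mu$ differ by at most one box in each row.

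The heart of the argument is then the purely combinatorial identity: a partition $\lambda$ of $2m+k$ has exactly $k$ odd parts if and only if there is a (necessarily unique) partition $\mu$ of $m$ such that $\lambda/2\mu$ is a vertical strip of size $k$. For the forward direction, given $\lambda$ with exactly $k$ odd parts, I would define $\mu_i = \lfloor \lambda_i / 2 \rfloor$; then $2\mu$ is obtained from $\lambda$ by deleting the final box of each odd row, which is exactly a vertical strip of size $k$, and one checks $\mu$ is a genuine partition (the $\mu_i$ are weakly decreasing since the $\lambda_i$ are) with $|\mu| = (|\lambda| - k)/2 = m$. For the converse, if $\lambda/2\mu$ is a vertical strip of size $k$ then in each row $\lambda_i - 2\mu_i \in \{0,1\}$, so $\lambda_i$ is odd exactly when $\lambda_i - 2\mu_i = 1$, and there are exactly $k$ such rows; moreover this forces $\mu_i = \lfloor \lambda_i/2\rfloor$, giving uniqueness. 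Hence each $\chi^\lambda$ with $k$ odd parts appears with coefficient exactly $1$ and all others with coefficient $0$, which is the claim.

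I expect the only mild subtlety to be bookkeeping around the degenerate cases $m = 0$ or $k = 0$ (where the paper has already fixed the conventions, so these are immediate) and making sure the ``vertical strip of size $k$'' condition from the column Pieri rule is stated in the form that makes the row-by-row analysis transparent; these are routine. An alternative, essentially equivalent, route would avoid Littlewood--Richardson coefficients entirely: one can observe directly that $H^{(2^m\,;\,k)}$ is induced from the trivial-tensor-sign module for $(S_2 \wr S_m) \times S_k$, apply the formula for the induced character in terms of the cycle-type data, and recognise the answer via the hook/$2$-quotient description of partitions by parity of their parts; but the Pieri-rule argument above is cleaner and I would present that one.
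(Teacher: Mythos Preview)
Your proposal is correct and follows essentially the same route as the paper: the paper's proof is the one-line remark that the result follows from Pieri's rule applied to Thrall's character formula for $H^{(2^m)}$, and your argument simply spells out the combinatorial verification (that adding a vertical $k$-strip to a partition with all even parts yields exactly the partitions with $k$ odd parts, and conversely) that the paper leaves to the reader.
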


\begin{proof}
This follows from Pieri's rule (see \cite[7.15.9]{StanleyII}) applied to the
ordinary character of~$H^{(2^m)}$.
\end{proof}

We remark that an alternative proof of
Lemma~\ref{lemma:FoulkesChars} with minimal pre-requisites
can be found in \cite{IRS}; the main result of \cite{IRS} uses the
characters of twisted Foulkes modules to construct a `model' character
for each symmetric group containing each irreducible character exactly once.

In the remainder of this section we suppose that $F$
has odd characteristic $p$ and define a module
isomorphic to $H^{(2^m \; k)}$ that will be used in the calculations in \S 4.
Let $S_X$ denote the symmetric group on the set $X$.
Let $\Delta^{(2^m \; k)}$ be 
the set of all elements of the form
\[ 
\bigl\{ \{i_1, i'_1\}, \ldots, \{ i_m, i'_m \} ,
(j_1, \ldots, j_k)\bigr\}
\]
where 
$\{i_1, i'_1, \ldots, i_m, i'_m,  
j_1, \ldots, j_k\} = \{1,\ldots, 2m+k\}$. 
Given $\delta \in \Delta^{(2^m \; k)}$
of the form above, we define 
\begin{align*}
\mathcal{S}(\delta) &= 
\bigl\{ \{i_1, i'_1\}, \ldots, \{ i_m, i'_m \} \bigr\}, \\
\mathcal{T}(\delta) &= \{ j_1, \ldots, j_k\}.
\end{align*}
The symmetric group
$S_{2m+k}$ acts transitively on $\Delta^{(2^m \; k)}$ 
by
\[
\delta g =
\bigl\{ \{i_1g, i'_1g\}, \ldots, \{ i_mg, i'_mg \} ,
(j_1g, \ldots, j_kg)\bigr\}
\] 
for $g \in S_{2m+k}$.
Let $F\Delta^{(2^m \; k)}$ be the 
permutation
module for $FS_{2m+k}$ with $F$-basis $\Delta^{(2^m \; k)}$. Let
$K^{(2^m \; k)}$ be the subspace of $F\Delta^{(2^m \; k)}$  spanned by 
\[ \bigl\{ \delta - \sgn(g) \delta g : \delta \in \Delta^{(2^m \; k)}, 
g\in S_{\mathcal{T}(\delta)} \bigr\}. \]
Since this set is permuted by $S_{2m+k}$, it is clear that
$K^{(2^m \; k)}$ is an $FS_{2m+k}$-submodule of~$F\Delta^{(2^m \; k)}$.
For $\delta \in \Delta^{(2^m \; k)}$, let $\overline{\delta} \in
F\Delta^{(2^m \; k)} / K^{(2^m \; k)}$ denote the image 
$\delta + K^{(2^m \; k)}$ of~$\delta$
under the quotient map. 
Let $\Omega^{(2^m \; k)}$
be the subset of~$\Delta^{(2^m \; k)}$ consisting of those
elements of the form above such that $j_1 < \ldots < j_k$. 
In the next lemma we use  $\Omega^{(2^m \; k)}$
to identify $F\Delta^{(2^m \; k)}/ 
K^{(2^m \; k)}$
with~$H^{(2^m \; k)}$.

\begin{lemma}{\ }\label{lemma:DeltaKIso}

\begin{thmlist}
\item For each $\delta \in \Delta^{(2^m \; k)}$ there
exists a unique $\omega \in \Omega^{(2^m \; k)}$ 
such that $\overline{\delta} \in \{\overline{\omega}, -\overline{\omega}\}$.
Moreover, for this $\omega$ we have 
$\mathcal{S}(\delta) = \mathcal{S}(\omega)$ and
$\mathcal{T}(\delta) = \mathcal{T}(\omega)$ and there
exists a unique
$h \in S_{\mathcal{T}(\delta)}$  such that $\delta h = \omega$.

\item The set $\{ \overline{\omega} :
\omega \in \Omega^{(2^m \; k)} \}$
is an $F$-basis for $F\Delta^{(2^m \; k)} / K^{(2^m \; k)}$.

\item The $FS_{2m+k}$-modules $H^{(2^m \; k)}$ and $F\Delta^{(2^m \; k)} / K^{(2^m \; k)}$
are isomorphic. 
\end{thmlist}
\end{lemma}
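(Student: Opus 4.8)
The plan is to prove the three parts of Lemma~\ref{lemma:DeltaKIso} in order, since (ii) and (iii) both rest on (i).

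\textbf{Part (i).} Fix $\delta \in \Delta^{(2^m\;k)}$ written in the standard form above, so that $\mathcal{T}(\delta) = \{j_1,\ldots,j_k\}$ and the second component is the ordered tuple $(j_1,\ldots,j_k)$. First I would observe that the set-partition component $\mathcal{S}(\delta)$ and the underlying set $\mathcal{T}(\delta)$ are unchanged when we act on $\delta$ by any $g \in S_{\mathcal{T}(\delta)}$, so any $\omega$ in the same orbit under $S_{\mathcal{T}(\delta)}$ automatically satisfies $\mathcal{S}(\delta)=\mathcal{S}(\omega)$ and $\mathcal{T}(\delta)=\mathcal{T}(\omega)$. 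Now let $\omega$ be the unique element of $\Delta^{(2^m\;k)}$ with $\mathcal{S}(\omega)=\mathcal{S}(\delta)$ and whose tuple is the increasing rearrangement of $(j_1,\ldots,j_k)$; by definition $\omega \in \Omega^{(2^m\;k)}$, and there is a unique $h \in S_{\mathcal{T}(\delta)}$ with $\delta h = \omega$, namely the permutation sorting the tuple. From the defining relations of $K^{(2^m\;k)}$ applied to $\delta$ and the element $h$, we get $\overline{\delta} = \sgn(h)\,\overline{\omega} \in \{\overline{\omega},-\overline{\omega}\}$. It remains to prove uniqueness of $\omega$: if $\omega' \in \Omega^{(2^m\;k)}$ also has $\overline{\delta} \in \{\overline{\omega'},-\overline{\omega'}\}$, then $\overline{\omega} = \pm\overline{\omega'}$; I would then use part (ii), or rather a direct linear-independence argument, to conclude $\omega = \omega'$. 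To avoid circularity I would instead argue directly here: the relations spanning $K^{(2^m\;k)}$ only ever relate basis vectors $\delta_1,\delta_2 \in \Delta^{(2^m\;k)}$ with $\mathcal{S}(\delta_1)=\mathcal{S}(\delta_2)$ and $\mathcal{T}(\delta_1)=\mathcal{T}(\delta_2)$, so $K^{(2^m\;k)}$ decomposes as a direct sum over the (finitely many) pairs $(\mathcal{S},\mathcal{T})$, and within the block indexed by $(\mathcal{S},\mathcal{T})$ the quotient is the sign representation of $S_{\mathcal{T}} \cong S_k$ induced up, which is one-dimensional as an $F$-space after picking the sorted representative. Hence $\overline{\omega}$ and $\overline{\omega'}$ can be proportional only if they lie in the same block, i.e. $\mathcal{S}(\omega)=\mathcal{S}(\omega')$ and $\mathcal{T}(\omega)=\mathcal{T}(\omega')$, and then both tuples being increasing forces $\omega=\omega'$.

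\textbf{Part (ii).} Part (i) shows that every $\delta \in \Delta^{(2^m\;k)}$ has $\overline{\delta} = \pm\overline{\omega}$ for some $\omega \in \Omega^{(2^m\;k)}$, so the set $\{\overline{\omega} : \omega \in \Omega^{(2^m\;k)}\}$ spans $F\Delta^{(2^m\;k)}/K^{(2^m\;k)}$. For linear independence I would use the block decomposition just described: $F\Delta^{(2^m\;k)} = \bigoplus_{(\mathcal{S},\mathcal{T})} F\Delta_{(\mathcal{S},\mathcal{T})}$ and correspondingly $K^{(2^m\;k)} = \bigoplus_{(\mathcal{S},\mathcal{T})} K_{(\mathcal{S},\mathcal{T})}$, where the summand indexed by $(\mathcal{S},\mathcal{T})$ has an $F$-basis given by the $\delta$ with that data, and $K_{(\mathcal{S},\mathcal{T})}$ has codimension exactly $1$ in it (being the kernel of the alternating-sum functional $\delta \mapsto \sgn$ relative to the sorted tuple; here we use that $p$ is odd, so $\sgn$ is a nontrivial character and the fixed space is genuinely a complement). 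The single $\omega \in \Omega^{(2^m\;k)}$ with data $(\mathcal{S},\mathcal{T})$ gives a nonzero vector $\overline{\omega}$ in this $1$-dimensional quotient, so the images are a basis block by block, hence globally. A dimension count also confirms this: $|\Omega^{(2^m\;k)}|$ equals the number of pairs $(\mathcal{S},\mathcal{T})$, which matches $\dim H^{(2^m\;k)} = |\Omega^{(2^m)}|\binom{2m+k}{k}$.

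\textbf{Part (iii).} To identify the quotient with $H^{(2^m\;k)}$, recall that $H^{(2^m\;k)} = (H^{(2^m)} \boxtimes \sgn_{S_k})\ind^{S_{2m+k}}_{S_{2m}\times S_k}$, which by the definition of induction has an $F$-basis indexed by $\Omega^{(2^m)} \times (\text{cosets of } S_{2m}\times S_k)$, equivalently by the pairs $(\mathcal{S},\mathcal{T})$, i.e. exactly by $\Omega^{(2^m\;k)}$. I would define the linear map $\Phi : H^{(2^m\;k)} \to F\Delta^{(2^m\;k)}/K^{(2^m\;k)}$ on this basis by sending the basis vector associated to $\omega \in \Omega^{(2^m\;k)}$ to $\overline{\omega}$; by part (ii) this is an $F$-linear isomorphism. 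The content is checking $FS_{2m+k}$-equivariance. For $g \in S_{2m+k}$ and $\omega \in \Omega^{(2^m\;k)}$, the element $\omega g \in \Delta^{(2^m\;k)}$ has $\overline{\omega g} = \sgn(h)\,\overline{\omega'}$ where $\omega'$ is its sorted representative and $h$ the sorting permutation from part (i); one checks this sign $\sgn(h)$ is precisely the sign twist that appears when $g$ is pushed through the induced module construction $(H^{(2^m)}\boxtimes\sgn_{S_k})\ind$ — namely, writing the action of $g$ on a transversal element introduces an element of $S_{2m}\times S_k$ whose $S_k$-component acts on $\sgn_{S_k}$ by exactly $\sgn(h)$. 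Matching these two sign bookkeeping computations is the crux: it is the one place where the twist by $\sgn$ in the definition of $K^{(2^m\;k)}$ has to line up with the twist by $\sgn_{S_k}$ in the definition of $H^{(2^m\;k)}$. I would organize this by choosing the transversal of $S_{2m}\times S_k$ in $S_{2m+k}$ compatibly with the "sorted tuple" normal form, so that the transversal element for $\omega$ and the sorting permutation $h$ for $\omega g$ interact transparently.

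\textbf{Main obstacle.} The only real work is the equivariance check in part (iii): tracking the sign that appears when a group element $g$ disturbs the "increasing tuple" normalization and verifying it agrees with the sign character acting in the induced module. Everything else — the block decomposition of $K^{(2^m\;k)}$, spanning, linear independence, dimension count — is routine once one notes that $K^{(2^m\;k)}$ is a direct sum over the $(\mathcal{S},\mathcal{T})$-data and that $p$ odd makes $\sgn$ a nontrivial linear character so that the "alternating" relations cut out a codimension-one (not the whole) subspace in each block.
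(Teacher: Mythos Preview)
Your proposal is correct in outline, but it differs from the paper's argument in two places worth noting.

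For (i) and (ii) you use a block decomposition of $F\Delta^{(2^m\;k)}$ and $K^{(2^m\;k)}$ over pairs $(\mathcal{S},\mathcal{T})$ and argue each block quotient is one-dimensional. The paper instead proves existence in (i), then rewrites the spanning set of $K^{(2^m\;k)}$ as $\{\omega - \sgn(y)\,\omega y : \omega \in \Omega^{(2^m\;k)},\ y \in S_{\mathcal{T}(\omega)}\}$ and runs a global dimension count ($\dim K \le |\Omega^{(2^m\;k)}|(k!-1)$, $\dim F\Delta/K \le |\Omega^{(2^m\;k)}|$, and these add to $\dim F\Delta$), deducing (ii) and then the uniqueness in (i). Both arguments are fine; your block decomposition is a little more structural, the paper's is a little shorter. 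One correction: your parenthetical ``here we use that $p$ is odd'' is wrong and should be dropped. The lemma is stated for an arbitrary field $F$ (the odd-characteristic hypothesis is imposed only \emph{after} this lemma), and indeed your own functional $\delta \mapsto \sgn(h_\delta)$ is nonzero in any characteristic (in characteristic~$2$ it is simply the augmentation map), so the codimension-one conclusion holds regardless.

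For (iii) the paper takes a cleaner route than your explicit equivariance check. Rather than building a map $H^{(2^m\;k)} \to F\Delta^{(2^m\;k)}/K^{(2^m\;k)}$ basis-element by basis-element and tracking signs, the paper picks the single element $\omega = \{\{1,2\},\ldots,\{2m-1,2m\},(2m+1,\ldots,2m+k)\}$, observes directly that the $F(S_{2m}\times S_k)$-submodule it generates is $H^{(2^m)} \boxtimes \sgn_{S_k}$ (using (ii) to see the vectors $\overline{\omega h}$ for $h \in S_{2m}$ are independent), and then concludes by a dimension count that the quotient is the full induced module. This sidesteps the transversal-and-sign bookkeeping you identify as the main obstacle; your approach works too, but the paper's is less error-prone.
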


\begin{proof}
For brevity we write $K$ for $K^{(2^m \; k)}$.
Let 
\[ \delta = \bigl\{ \{i_1, i'_1\}, \ldots, \{ i_m, i'_m \},
(j_1, \ldots, j_k)\bigr\} \in \Delta^{(2^m \; k)}. \]
The 
unique $h \in S_{\mathcal{T}(\delta)}$ such that $\delta h \in \Omega^{(2^m\; k)}$
is the permutation such that 
$j_1 h < \ldots < j_k h$. Set $\omega = \delta h$. Since $S(\omega) = S(\delta)$ 
and $T(\omega) = T(\delta)$ we have the existence part of (i).
Since $\delta - \sgn(h)\delta h \in K$, 
it follows that $\overline{\delta} = \sgn(h) \overline{\omega}$, and that 
$F\Delta^{(2^m \; k)} / K$ is spanned by \hbox{$\{ \overline{\omega} :
\omega \in \Omega^{(2^m \; k)} \}$}. 
Set~$x = h^{-1}$.
If $g \in S_{\mathcal{T}(\delta)}$  then
\[ \delta - \sgn(g) \delta g = -\sgn(x) \bigl( \omega - \sgn(x) \omega x \bigr)
+ \sgn(x) \bigl( \omega - \sgn(xg) \omega xg \bigr). \]
Since $\mathcal{T}(\delta) = \mathcal{T}(\omega)$ we have $x$, $xg \in S_{\mathcal{T}(\omega)}$.
It follows that
$K$ is spanned by 
\[ \{ \omega - \sgn(y) \omega y : \omega
 \in \Omega^{(2^m \; k)}, y \in S_{\mathcal{T}(\omega)} \}. \] 
Hence $\dim F\Delta^{(2^m \; k)} / K \le | \Omega^{(2^m \; k)}|$
and $\dim K \le | \Omega^{(2^m \; k)}| (k! -1)$. Since 
$\dim F\Delta^{(2^m \; k)}
= |\Omega^{(2^m \; k)}|\hskip1pt k!$ 
we have equality in both cases.
This proves part~(ii). Moreover, if $\overline{\omega}
= \pm \overline{\omega'}$ for $\omega, \omega \in \Omega^{(2^m \; k)}$ then
$S(\omega) = S(\omega')$ and~$T(\omega) = T(\omega')$, and so $\omega
= \omega'$. This proves the uniqueness in (i).

For (iii), let
\[ \omega = \big\{ \{1,2\}, \ldots, \{2m-1,2m\}, (2m+1,\ldots, 2m+k) \bigr\} \in
\Omega^{(2^m \; k)}. \]
Write $S_{2m} \times S_k$ for $S_{\{1,\ldots, 2m\}} 
\times S_{\{2m+1,\ldots, 2m+k\}}$, thought of as a subgroup of $S_{2m+k}$.
Given $h \in S_{\{1,\ldots, 2m\}}$ 
and $x\in S_{\{2m+1, \ldots, 2m+k\}}$ we have $\overline{\omega} hx = 
\sgn(x) \overline{\omega h}$.
By (ii) the set $\{ \overline{\omega h} : h \in S_{2m} \}$
is linearly independent.
Hence the $F(S_{2m} \times S_k)$-submodule of $F\Delta^{(2^m \; k)} / K$ generated 
by~$\overline{\omega}$ is isomorphic to $H^{(2^m)} \boxtimes \sgn_{S_k}$. 
Since \[ \dim F\Delta^{(2^m \; k)} / K = 
|\Omega^{(2^m \; k)}| = \binom{2m+k}{k} \dim H^{(2^m)} \]
and the index of $S_{2m} \times S_k$ in $S_{2m+k}$ is $\binom{2m+k}{k}$,
it follows that
\[ F\Delta^{(2^m \; k)} / K \cong \bigl( H^{(2^m)} \boxtimes \sgn_{S_k} 
\bigr) \Ind^{S_{2m+k}}_{S_{2m} \times S_k} \]
as required.
\end{proof}
Since $p$ is odd we have that 
\[ F\uparrow_{(S_2\wr S_m)\times A_k}^{(S_2\wr S_m)\times S_k}=F_{(S_2\wr S_m)\times S_k}\oplus \big(F_{S_2\wr S_m}\boxtimes \mathrm{sgn}_{S_k}\big)\]
where~$A_k$ denotes the alternating group on 
$\{2k+1,\ldots, 2k+n\}$. Therefore $H^{(2^m \; k)}$ is a direct summand of
the module induced from the trivial \hbox{$F\bigl( (S_2 \wr S_m) \times A_k \bigr)$-module},
and so $H^{(2^m \; k)}$ is a $p$-permutation module.

In the following lemma we construct a $p$-permutation basis for~$H^{(2^m \; k)}$.

\medskip
\begin{lemma}\label{lemma:PPermBasis}
Let $P$ be a $p$-subgroup of $S_{2m+k}$. 

\begin{thmlist}
\item There 
is a choice of signs $s_\omega \in \{+1, -1\}$ for $\omega \in 
\Omega^{(2^m \; k)}$ such that $\{ s_\omega \overline{\omega} :
\omega \in \Omega^{(2^m \; k)} \}$ is 
a $p$-permutation basis for $H^{(2^m \; k)}$ with respect to~$P$.

\item Let $\omega = \bigl\{ \{i_1, i'_1\}, 
\ldots, \{i_m, i'_m\}, (j_1, \ldots, j_k) \} \in \Omega^{(2^m \; k)}$
and let $g \in P$.
Then $\overline{\omega}$ is fixed by~$g$ if and only 
if $\bigl\{
 \{i_1, i'_1\}, \ldots, \{i_m, i'_m\} \bigr\}$ is fixed by $g$.
\end{thmlist}
\end{lemma}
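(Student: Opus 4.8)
The plan is to fix a $p$-subgroup $P \le S_{2m+k}$ and to choose, for each $P$-orbit on $\Omega^{(2^m\;k)}$, a single orbit representative together with a consistent sign, so that the $P$-action on the resulting basis vectors $s_\omega\overline\omega$ is by genuine permutation (no stray sign $-1$ appears). The starting point is part~(i) of Lemma~\ref{lemma:DeltaKIso}: for each $\delta \in \Delta^{(2^m\;k)}$ there is a unique $\omega \in \Omega^{(2^m\;k)}$ and a unique $h_\delta \in S_{\mathcal{T}(\delta)}$ with $\delta h_\delta = \omega$, and then $\overline\delta = \sgn(h_\delta)\,\overline\omega$. I would first prove~(ii): given $\omega$ and $g\in P$, the element $\omega g$ lies in $\Delta^{(2^m\;k)}$ with $\mathcal{S}(\omega g) = \mathcal{S}(\omega)g$ and $\mathcal{T}(\omega g) = \mathcal{T}(\omega)g$, and the unique $\omega' \in \Omega^{(2^m\;k)}$ associated to $\omega g$ has $\mathcal{S}(\omega') = \mathcal{S}(\omega)g$ and $\mathcal{T}(\omega') = \mathcal{T}(\omega)g$. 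Hence $\overline\omega g = \pm\overline{\omega'}$, and this is $+\overline\omega$ (i.e. $\omega g$ is already ordered, $\omega' = \omega$, and the straightening permutation is trivial) precisely when the unordered data $\bigl\{\{i_1,i_1'\},\ldots,\{i_m,i_m'\}\bigr\} = \mathcal{S}(\omega)$ is fixed by $g$; if $\mathcal{S}(\omega)$ is not fixed then $\omega g$ corresponds to a different $\omega' \ne \omega$, so certainly $\overline\omega g \ne \overline\omega$. This gives~(ii) cleanly and also shows that the $P$-orbit of $\omega$ in $\Omega^{(2^m\;k)}$ is indexed by the $P$-orbit of $\mathcal{S}(\omega)$ under the action on set partitions of pairs.

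For part~(i) I would build the sign function orbit by orbit. Pick a set of representatives $\omega_1,\ldots,\omega_\ell$ for the $P$-orbits on $\Omega^{(2^m\;k)}$, set $s_{\omega_i} = +1$, and for an arbitrary $\omega$ in the orbit of $\omega_i$ choose $g \in P$ with $\omega_i g$ mapping to $\omega$ (via the straightening of Lemma~\ref{lemma:DeltaKIso}(i), say $\omega_i g \cdot h = \omega$ with $h \in S_{\mathcal{T}(\omega_i g)}$ the unique ordering permutation), and define $s_\omega = \sgn(h)$. The content is that this is well-defined: if $g' \in P$ is another such element with straightening $h'$, I must check $\sgn(h) = \sgn(h')$. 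Here $g^{-1}g' \in P$ fixes $\omega$ (since both produce the same $\omega$ after straightening, and the straightened image together with $\mathcal{S},\mathcal{T}$ determines the element by the uniqueness in Lemma~\ref{lemma:DeltaKIso}(i)), so by part~(ii) it fixes $\overline\omega$; tracking the straightening identities $\overline{\omega_i}g = \sgn(h)^{-1}\overline\omega$ and $\overline{\omega_i}g' = \sgn(h')^{-1}\overline\omega$ and using that $\overline{\omega_i}(g^{-1}g') $-type manipulations preserve the sign, I get $\sgn(h) = \sgn(h')$. With $s_\omega$ so defined, for any $\omega$ and any $g \in P$ the vector $s_\omega\overline\omega \cdot g$ equals $s_{\omega''}\overline{\omega''}$ where $\omega''$ is the image of $\omega$ under $g$ in $\Omega^{(2^m\;k)}$: indeed both $s_\omega$ and $s_{\omega''}$ are computed along paths from the same orbit representative, and concatenating a path to $\omega$ with $g$ gives a path to $\omega''$, so the signs are compatible by the cocycle-type identity $\sgn(h_{\omega\to\omega''}\text{ along }g) s_\omega = s_{\omega''}$. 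Thus $P$ permutes $\{s_\omega\overline\omega\}$, which is what~(i) asserts.

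The main obstacle is the well-definedness check in the construction of the signs $s_\omega$: one must verify that the sign assigned to $\omega$ does not depend on the chosen path through $P$ from the orbit representative, which amounts to showing that every $g \in P$ stabilising $\omega$ contributes a straightening permutation of sign $+1$ --- and this is exactly the stabiliser version of part~(ii), namely that $g$ fixes $\overline\omega$ (equivalently fixes $\mathcal{S}(\omega)$) forces the ordering permutation on $\mathcal{T}(\omega)$ induced by $g$ to be trivial, hence of even sign. So in fact~(ii) is not just a consequence but the engine of~(i), and I would prove~(ii) first and then feed it into the orbit-wise construction. Everything else is bookkeeping with the uniqueness statement of Lemma~\ref{lemma:DeltaKIso}(i).
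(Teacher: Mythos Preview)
Your overall strategy---prove (ii) first and then build the signs orbit by orbit for (i)---is sound and essentially equivalent to the paper's (which proves (i) first and deduces (ii) from it). However, there is a genuine gap in your proof of~(ii), and it is precisely the place where the hypothesis that $p$ is \emph{odd} must enter.

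You assert that if $g \in P$ fixes $\mathcal{S}(\omega)$ then ``$\omega g$ is already ordered \ldots\ and the straightening permutation is trivial'', and later that fixing $\mathcal{S}(\omega)$ ``forces the ordering permutation on $\mathcal{T}(\omega)$ induced by $g$ to be trivial''. This is false. If $g$ fixes $\mathcal{S}(\omega)$ then $g$ permutes $\mathcal{T}(\omega)$ as a set, but it may permute it nontrivially; the tuple $(j_1 g,\ldots,j_k g)$ need not be increasing, and the straightening permutation $h$ is exactly (the inverse of) $g|_{\mathcal{T}(\omega)}$ under the obvious identification. What you actually need is only that $\sgn(h)=+1$, and this holds because $g$ has $p$-power order with $p$ odd, so every cycle of $g|_{\mathcal{T}(\omega)}$ has odd length and $g|_{\mathcal{T}(\omega)}$ is an even permutation. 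Without this observation the argument breaks down (and indeed the lemma is false for $p=2$). The paper handles the same point slightly differently: it observes that $\langle\overline{\omega}\rangle$ is a one-dimensional representation of the cyclic $p$-group $\langle g\rangle$ over a field of characteristic $p$, hence trivial. Either fix works; once you insert one of them, your well-definedness check for the signs in~(i) goes through exactly as you describe.
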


\begin{proof}
For $\omega \in \Omega^{(2^m \; k)}$ let
$\gamma(\omega) = \bigl( \mathcal{S}(\omega), \mathcal{T}(\omega) \bigr)$.
Let 
\[ \Gamma^{(2^m \; k)} = \bigl\{ \gamma(\omega) : \omega \in \Omega^{(2^m \; k)} \bigr\}. \]
Notice that the map $$\gamma:\Omega^{(2^m;k)}\longrightarrow \Gamma^{(2^m \; k)}$$
associating to each $\omega\in\Omega^{(2^m;k)}$ the element $\gamma(\omega)\in\Gamma^{(2^m \; k)}$
is a bijection. 
The set $\Gamma^{(2^m \; k)}$ is acted on by $S_{2m+k}$ in the obvious way. 
Let $\gamma_1, \ldots, \gamma_c \in \Gamma^{(2^m \; k)}$ be
representatives for the orbits of $P$ on $\Gamma^{(2^m \; k)}$. 
For each $b \in \{1,\ldots, c\}$, let 
$\omega_b \in \Omega^{(2^m \; k)}$ be the unique element such that
$\gamma_b = \gamma(\omega_b)$. Given any $\omega \in \Omega^{(2^m\;k)}$
there exists a unique $b$ such that $\gamma(\omega)$ is in the orbit of $P$
on $\Gamma^{(2^m \; k)}$
containing~$\gamma_b$. Choose $g \in P$
such that $\gamma(\omega) = \gamma_b g$. Then
$\omega$ and $\omega_b g$ are equal up to the order of the numbers
in their $k$-tuples, and so there exists $h \in S_{\mathcal{T}(\omega)}$ 
such that  $\omega_b g h = \omega$. By Lemma~\ref{lemma:DeltaKIso}(i)
we have
\[ \overline{\omega_b} g = s_\omega \overline{\omega} \]
for some $s_\omega \in \{+1, -1\}$. 
If $\tilde{g} \in P$ is another permutation such that $\gamma(\omega)
= \gamma_b \tilde{g}$ then 
$\overline{\omega_b} g{\tilde{g}}^{-1} = \pm \overline{\omega_b}$.
Hence the $F$-span of $\overline{\omega_b}$ is a $1$-dimensional
representation
of the cyclic $p$-group generated by $g{\tilde{g}^{-1}}$. The unique
such representation is the trivial one, so $\overline{\omega_b}
g = \overline{\omega_b}\tilde{g}$. The sign $s_\omega$ is therefore well-defined.
Now suppose that $\omega$, $\omega' \in \Omega^{(2^m \; k)}$
and $h \in P$ are such that~$s_{\omega}\overline{\omega}h = \pm s_{\omega'}\overline{\omega'}$.
By construction of the basis there exists $\omega_b \in \Omega^{(2^m \; k)}$ and
$g$, $g' \in P$ such that $s_\omega \overline{\omega} = \overline{\omega_b} g$
and $s_{\omega'} \overline{\omega'} = \overline{\omega_b} g'$.
Therefore
\[ \overline{\omega_b} g h = s_\omega \overline{\omega} h = \pm s_{\omega'}
\overline{\omega'} = \pm \overline{\omega_b} g'\]
and so $\overline{\omega_b} gh{g'}^{-1} = \pm \overline{\omega_b}$.
As before, the plus sign must be correct. This proves (i).


For (ii), 
suppose that $\overline{\omega}g = \overline{\omega}$.
Setting $\delta = \omega g$, and noting that $\overline{\delta} =
\overline{\omega}$,
it follows from Lemma~\ref{lemma:DeltaKIso}(i)
that $\mathcal{S}(\omega g) = \mathcal{S}(\delta) = \mathcal{S}(\omega)$.
Hence the condition in (ii)
is necessary.
Conversely, if
$\bigl\{ \{i_1, i'_1\}, \ldots, \{i_m, i'_m\} \bigr\}$
is fixed by~$g$ then~$g$ permutes $\{j_1,\ldots, j_k\}$ 
and so
$\overline{\omega} g \in \{ \overline{\omega}, -\overline{\omega} \}$.
Since $g \in P$, it now follows from~(i) that
$\overline{\omega} g = \overline{\omega}$, as required.
\end{proof}

In applications of Lemma~\ref{lemma:PPermBasis}(ii) 
it will be useful to
note that there is an isomorphism of $S_{2m}$-sets
between $\Omega^{(2^m)}$ and the set of fixed-point free
involutions in $S_{2m}$, where the symmetric group acts
by conjugacy. 
Given $\omega \in \Omega^{(2^m \; k)}$ with
$\mathcal{S}(\omega) = \bigl\{ \{i_1, i'_1 \}, \ldots, \{i_m, i'_m\} \bigr\}$, 
we define 
\[ \mathcal{I}(\omega) = (i_1,i'_1) \cdots (i_m, i'_m) \in S_{2m+k}. \]
By Lemma~\ref{lemma:PPermBasis}(ii), if $g \in S_{2m+k}$ is a $p$-element
then $g$ fixes $\overline{\omega}$ if and only if~$g$ commutes with~$\mathcal{I}(\omega)$.
 Corollary~\ref{cor:Brauer} and Lemma~\ref{lemma:PPermBasis}
 therefore implies the following proposition,
 which we shall use repeatedly in the next section.

\begin{proposition}\label{prop:Brauer}
Let $R$ be a $p$-subgroup of $S_{2m+k}$ and let $P$ be a Sylow $p$-subgroup
of $S_{2m+k}$ containing a Sylow $p$-subgroup of $N_G(R)$.
There is a choice of signs $s_\omega \in \{+1,-1\}$ for $\omega \in \Omega^{(2^m \; k)}$
such that
\[ \bigl\{ s_\omega \overline{\omega} : \omega \in \Omega^{(2^m \; k)}, \,
\mathcal{I}(\omega) \in \C_{S_{2m+k}}(R) \bigr\}. \]
is a $p$-permutation basis for the
Brauer correspondent $H^{(2^m \; k)}(R)$ with respect to $P \cap N_G(R)$.
\end{proposition}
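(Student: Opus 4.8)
The plan is to obtain the result as a more or less immediate consequence of Corollary~\ref{cor:Brauer} and the two parts of Lemma~\ref{lemma:PPermBasis}, together with a little bookkeeping about normalizers; I do not expect a genuine obstacle. Write $G = S_{2m+k}$ and let $P'$ be a Sylow $p$-subgroup of $N_G(R)$ with $P' \le P$, as supplied by the hypothesis. Two elementary observations come first. Since $R$ is normal in $N_G(R)$, the $p$-group $R$ is contained in $O_p(N_G(R))$, hence in every Sylow $p$-subgroup of $N_G(R)$, and in particular $R \le P' \le P$. Moreover $P \cap N_G(R)$ is a $p$-subgroup of $N_G(R)$ containing the Sylow $p$-subgroup $P'$, so $P \cap N_G(R) = P'$, which is therefore itself a Sylow $p$-subgroup of $N_G(R)$.

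Next I would apply Lemma~\ref{lemma:PPermBasis}(i) with the $p$-subgroup taken to be $P$ itself, producing signs $s_\omega \in \{+1,-1\}$ for which $\mathcal{B} = \{ s_\omega \overline{\omega} : \omega \in \Omega^{(2^m\;k)}\}$ is a $p$-permutation basis of $H^{(2^m\;k)}$ with respect to $P$. Since $R \le P$ and $P$ is a Sylow $p$-subgroup of $G$, Corollary~\ref{cor:Brauer} identifies $H^{(2^m\;k)}(R)$ with $\langle \mathcal{B}^R\rangle$, where $\mathcal{B}^R$ is the set of those $s_\omega \overline{\omega}$ fixed by every element of $R$. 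Now $s_\omega\overline{\omega}$ is fixed by $g \in R$ exactly when $\overline{\omega}g = \overline{\omega}$; since each $g \in R$ is a $p$-element, Lemma~\ref{lemma:PPermBasis}(ii) and the remark just before the proposition (that a $p$-element $g$ fixes $\overline{\omega}$ iff $g$ commutes with the involution $\mathcal{I}(\omega)$) show this holds iff $g$ commutes with $\mathcal{I}(\omega)$. Letting $g$ range over $R$ gives $s_\omega\overline{\omega} \in \mathcal{B}^R$ if and only if $\mathcal{I}(\omega) \in \C_{S_{2m+k}}(R)$, so $\mathcal{B}^R$ is precisely the set displayed in the statement, and it is a basis of $H^{(2^m\;k)}(R)$.

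It then remains to verify that $\mathcal{B}^R$ is a \emph{$p$-permutation} basis of $H^{(2^m\;k)}(R)$ with respect to $P \cap N_G(R) = P'$. As $P' \le P$, the group $P'$ permutes the set $\mathcal{B}$; and for $v \in \mathcal{B}^R$, $g \in P'$ and $h \in R$ we have $(vg)h = v\bigl((ghg^{-1})g\bigr) = \bigl(v(ghg^{-1})\bigr)g = vg$, since $ghg^{-1} \in R$ (as $g$ normalizes $R$) and $v$ is fixed by $R$. Hence $vg \in \mathcal{B}^R$, so $P'$ permutes $\mathcal{B}^R$, completing the proof. This final step is the only point at which the hypothesis is used in an essential way — namely that $P$ contains a Sylow $p$-subgroup of the \emph{normalizer} of $R$, not merely of some arbitrary subgroup — but even here the verification is routine, and I anticipate no real difficulty.
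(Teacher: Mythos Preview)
Your argument is correct and is exactly the approach the paper takes: the proposition is stated there as an immediate consequence of Corollary~\ref{cor:Brauer} and Lemma~\ref{lemma:PPermBasis} (together with the remark on $\mathcal{I}(\omega)$), and you have simply written out the routine verifications that the paper leaves implicit. In particular your checks that $R \le P' = P \cap N_G(R)$ and that $P'$ permutes $\mathcal{B}^R$ are the right bookkeeping to justify the claim about the $p$-permutation basis with respect to $P \cap N_G(R)$.
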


\section{The local structure of $H^{(2^m\;k)}$}

In this section we  prove Theorem~\ref{thm:vertices}. 
Throughout we let $F$ be a field of odd characteristic $p$
and fix $m \in \N$, $k\in \N_0$.
Any vertex of an indecomposable non-projective summand of $H^{(2^m \; k)}$ must
contain, up to conjugacy,
one of the subgroups
\[ R_r = \langle z_1z_2\cdots z_r \rangle \]
where $z_j$ is the $p$-cycle $(p(j-1)+1, 
\ldots,pj)$ and $rp \le 2m+k$,
so we begin by calculating $H^{(2^m \; k)}(R_r)$. 
In the second step
we show that, for any $t \in \N$ such that $2t \le r$,
the Brauer correspondent $H^{(2^{tp} \; (r-2t)p)}(R_r)$ is indecomposable
as an $FN_{S_{rp}}(R_r)$-module 
and determine its vertex; in the third step 
we combine these results to complete the proof.

In the second step we shall require the basic lemma below; its proof
is left to the reader.

\begin{lemma}\label{lemma:perm} If $P$ is a $p$-group and $Q$ is a subgroup of $P$ then
the permutation module $F\ind_Q^P$ is indecomposable with vertex $Q$.
\end{lemma}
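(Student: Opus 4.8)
The plan is to treat indecomposability and the identification of the vertex in turn, in each case using that $P$ is a $p$-group, so that the trivial module is its only simple $FP$-module.

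\emph{Indecomposability.} Write $M = F\ind_Q^P$, with its natural $F$-basis the set $P/Q$ of cosets, on which $P$ acts transitively by left multiplication. First I would observe that the fixed space $M^P$ is one-dimensional: a $P$-fixed vector has coefficients constant on the $P$-orbits of $P/Q$, and there is only one such orbit, so $M^P$ is spanned by the sum of all cosets. Now suppose $M = M_1 \oplus M_2$ with $M_1, M_2 \neq 0$. Each $M_i$ is a nonzero $FP$-module, hence has nonzero socle; since the only simple $FP$-module is the trivial one, $M_i^P \neq 0$. Then $\dim_F M^P = \dim_F M_1^P + \dim_F M_2^P \geq 2$, contradicting the previous sentence. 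Hence $M$ is indecomposable.

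\emph{Vertex.} Since $M$ is equal to, and in particular a summand of, $F\ind_Q^P$, it is relatively $Q$-projective, so a vertex of $M$ is contained in a conjugate of $Q$ and therefore has order at most $|Q|$. For the reverse inequality I would invoke Corollary~\ref{cor:Brauer} with the ambient group taken to be $P$ itself: as $P$ is a $p$-group it is its own Sylow $p$-subgroup, and the coset basis $\mathcal{B} = P/Q$ of $M$ is permuted by $P$, so it is a $p$-permutation basis with respect to this Sylow subgroup. Taking $R = Q$, the set $\mathcal{B}^Q$ of $Q$-fixed cosets is nonempty — it contains the coset $Q$, which is fixed by $Q$ acting by left multiplication — so Corollary~\ref{cor:Brauer} tells us that $M$ has an indecomposable summand, necessarily $M$ itself by the first part, possessing a vertex that contains $Q$. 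Putting the two containments together, a vertex $V$ of $M$ satisfies $Q \leq V \leq Q^g$ for some $g \in P$; comparing orders gives $V = Q$, as claimed.

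There is no substantial obstacle here: the argument is short once one recalls that a nonzero module for a $p$-group over a field of characteristic $p$ has nonzero fixed points, and that Corollary~\ref{cor:Brauer} applies with the acting group equal to the $p$-group $P$. The only point needing a moment's attention is verifying the hypothesis of that corollary, namely that the chosen basis is stable under a Sylow $p$-subgroup of the acting group, which is automatic here. Alternatively, the lower bound on the vertex can be obtained by computing the Brauer quotient $M(Q)$ directly and applying Theorem~\ref{thm:BrauerCorr}, or from Higman's criterion, but the route via Corollary~\ref{cor:Brauer} is the most economical given what has been set up.
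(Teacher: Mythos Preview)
Your argument is correct. The paper itself does not give a proof of this lemma, stating only that ``its proof is left to the reader'', so there is no authors' proof to compare against; your approach via the one-dimensional fixed space and Corollary~\ref{cor:Brauer} is exactly the kind of short verification the authors had in mind. One cosmetic point: the paper works with right modules throughout (so the natural basis is the set of right cosets $Q\backslash P$ with $P$ acting by right multiplication), whereas you phrase things with left cosets and left multiplication; the argument is of course unaffected, since the coset $Q$ is fixed by $Q$ in either convention.
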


\subsection*{First step: Brauer correspondent with respect to $R_r$}

Let $r \in \N$ be such that $rp \le 2m+k$. We define
\[ T_r = \{ t \in \N_0 : tp\leq m,\ 2t \le r,\ (r-2t)p \le k \}. \]
For $t \in T_r$ let
\[ \mathcal{A}_{2t} 
=
\Bigl\{ \overline{\omega} \,\, : \,\, 
\ttfrac{\omega \in \Omega^{(2^m \; k)},\mathcal{I}(\omega) \in \C_{S_{2m+k}}(R_r) \hfill}
{\text{$\supp \mathcal{I}(\omega)$ contains exactly $2t$ orbits of $R_r$ of length $p$ 
}} \Bigr\}. \]

\begin{lemma}\label{lemma:Rcorr}
There is a direct sum decomposition of $FN_{S_{2m+k}}(R_r)$-modules
\[ H^{(2^m \; k)}(R_r) \cong \bigoplus_{t\in T_r} \langle \mathcal{A}_{2t}
\rangle. \] 
\end{lemma}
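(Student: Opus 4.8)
The plan is to use Proposition~\ref{prop:Brauer}, which tells us that $H^{(2^m \; k)}(R_r)$ has a $p$-permutation basis indexed by those $\omega \in \Omega^{(2^m \; k)}$ with $\mathcal{I}(\omega) \in \C_{S_{2m+k}}(R_r)$. The key observation is that for such an $\omega$, the involution $\mathcal{I}(\omega)$ commutes with $R_r$, and since $R_r$ is generated by a product of $r$ disjoint $p$-cycles on the points $\{1,\ldots,rp\}$, the support of $\mathcal{I}(\omega)$ restricted to each of these $r$ orbits of $R_r$ is either empty or the whole orbit (because an involution centralizing a $p$-cycle $z$ either fixes $\supp z$ pointwise — impossible for a fixed-point-free involution — or acts on $\supp z$ as a power of $z$, which is only an involution when it is trivial; so $\mathcal{I}(\omega)$ must either fix each orbit pointwise or, by pairing that orbit with another $R_r$-orbit via a $z$-equivariant bijection, swap two orbits). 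Thus $\supp \mathcal{I}(\omega)$ contains a well-defined even number $2t$ of the length-$p$ orbits of $R_r$, and this $t$ satisfies $tp \le m$ (those $2tp$ points are partitioned into $2$-sets, so $tp \le m$), $2t \le r$ (there are only $r$ such orbits), and $(r-2t)p \le k$ (the remaining $(r-2t)p$ points of $\supp R_r$ are not in $\supp \mathcal{I}(\omega)$, hence lie in $\mathcal{T}(\omega)$, which has size $k$); that is, $t \in T_r$. Hence the $p$-permutation basis partitions as the disjoint union of the sets $\mathcal{A}_{2t}$ over $t \in T_r$.

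Next I would observe that this partition of the basis is respected by the action of $N_{S_{2m+k}}(R_r)$: conjugation by an element $g \in N_{S_{2m+k}}(R_r)$ permutes the $r$ length-$p$ orbits of $R_r$, so it sends $\mathcal{I}(\omega)$ to $\mathcal{I}(\omega g) = \mathcal{I}(\omega)^g$, and the number of length-$p$ $R_r$-orbits inside $\supp \mathcal{I}(\omega)^g = \supp(\mathcal{I}(\omega))^g$ is unchanged. Since the basis elements $s_\omega \overline{\omega}$ are permuted up to sign by $N_{S_{2m+k}}(R_r) \cap P$ (by Proposition~\ref{prop:Brauer}) and, more generally, the action of any $g \in N_{S_{2m+k}}(R_r)$ sends $\langle \mathcal{A}_{2t}\rangle$ into itself (each $\overline{\omega g}$ is $\pm$ a basis vector lying in $\mathcal{A}_{2t}$ for the same $t$), each $\langle \mathcal{A}_{2t}\rangle$ is an $FN_{S_{2m+k}}(R_r)$-submodule. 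Summing over $t \in T_r$ then gives the claimed direct sum decomposition.

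The main point requiring care — the step I expect to be the real content — is the structural claim that an involution commuting with $R_r$ must interact with the $R_r$-orbits in the rigid "fix pointwise or swap in pairs" fashion described above; once that is nailed down, the index $t$ is manifestly well-defined and $N_{S_{2m+k}}(R_r)$-invariant, and the rest is bookkeeping. I would prove that claim by a direct argument: if $\sigma = \mathcal{I}(\omega)$ centralizes the $p$-cycle $z_j$, then $\sigma$ normalizes $\langle z_j \rangle$ and hence permutes $\supp z_j$ as an element of $\langle z_j\rangle \rtimes C$ for the relevant centralizer; restricting $\sigma$ to $\supp z_j$, if it is not the identity there then it maps $\supp z_j$ bijectively onto some other orbit $\supp z_{j'}$ commuting with $z_{j'} = z_j^{\pm 1}$-action, and an involution forces these to come in pairs. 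I would spell this out carefully but expect it to take only a few lines, after which the verification that $T_r$ is exactly the set of admissible $t$, and that the decomposition is as $FN_{S_{2m+k}}(R_r)$-modules, follows as above.
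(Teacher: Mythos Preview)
Your proposal is correct and takes essentially the same route as the paper: use Proposition~\ref{prop:Brauer} to obtain the basis, observe that $\mathcal{I}(\omega)$ permutes the length-$p$ orbits of $R_r$ as blocks so that the number of them contained in $\supp\mathcal{I}(\omega)$ is even and determines a well-defined $t \in T_r$, and then verify that this count is preserved by $N_{S_{2m+k}}(R_r)$ since $\mathcal{I}(\omega g) = \mathcal{I}(\omega)^g$. One small slip worth cleaning up: $\mathcal{I}(\omega)$ is not fixed-point-free on $\{1,\ldots,2m+k\}$ (it fixes $\mathcal{T}(\omega)$), so your parenthetical should simply say that if an $R_r$-orbit is fixed setwise then $\mathcal{I}(\omega)$ restricts there to a power of $z_j$, hence (as $p$ is odd) to the identity, so that orbit lies in $\mathcal{T}(\omega)$ --- this is exactly the dichotomy you want, and the rest of your argument goes through unchanged.
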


\begin{proof}
By Proposition~\ref{prop:Brauer}
the $FN_{S_{2m+k}}(R_{r})$-module $H^{(2^m;k)}(R_r)$ 
has as a basis 
\[ \mathcal{A} = 
\bigl\{ \overline{\omega} : \omega \in \Omega^{(2^m \; k)},\ 
\mathcal{I}(\omega) \in \C_{S_{2m+k}}(R_r) \bigr\}. \]
Let $\omega \in \Omega^{(2^m \; k)}$ be such that $\mathcal{I}(\omega)
\in \C_{S_{2m+k}}(R_r)$. Then
$\mathcal{I}(\omega)$ permutes, as blocks for its action,
the orbits of $R_r$. It follows that the number of orbits of $R_r$ of length $p$
contained in 
 $\supp \mathcal{I}(\omega)$ is even. Suppose this number is~$2t$. Clearly $2t \le r$
 and $2tp \le 2m$.
The remaining $r-2t$ orbits of length $p$ are contained in $\mathcal{T}(\omega)$.
Thus $(r-2t)p \le k$, and so $t \in T_r$ and 
$\overline{\omega} \in \mathcal{A}_{2t}$.

Let the $p$-cycles corresponding to the $2t$ orbits of $R_r$
that are contained in $\supp \mathcal{I}(\omega)$ be
 $z_{j_1}$, \ldots, $z_{j_{2t}}$.
Let $g\in N_{S_{2m+k}}(R_{r})$. Let 
$\omega^\star \in \Omega^{(2^m \; k)}$ be such 
that $\overline{\omega^\star} = \pm \, \overline{\omega g}$.
The $p$-cycles $z_{j_1}^g$, \ldots, $z_{j_{2t}}^g$
correspond precisely to the orbits of $R_r$ contained in $\supp \mathcal{I}(\omega^\star)$.
Hence 
$\overline{\omega^\star} \in \mathcal{A}_{2t}$,
and so the vector space $\langle \mathcal{A}_{2t} \rangle$ 
is invariant under $g$. 
Since $\mathcal{A} = \bigcup_{t \in T_r} \mathcal{A}_{2t}$
the lemma follows.
\end{proof}

There is an obvious factorization
$N_{S_{2m+k}}(R_r) = N_{S_{rp}}(R_r) \times S_{\{rp+1,\ldots, 2m+k\}}$.
The next proposition establishes a corresponding tensor
factorization of the $N_{S_{2m+k}}(R_r)$-module
$\langle \mathcal{A}_{2t} \rangle$. The 
shift required to make the second factor $H^{(2^{m-tp} \; k-(r-2t)p)}$ a module
for $FS_{\{rp+1,\ldots, 2m+k\}}$ is made explicit in the proof.

\begin{proposition}\label{prop:Rcorr}
If 
$t \in T_r$ then
there is an isomorphism 
\[ \langle \mathcal{A}_{2t} \rangle \cong 
H^{(2^{tp} \; (r-2t)p)}(R_{r}) \boxtimes H^{(2^{m-tp} \; k-(r-2t)p)}
\]
of $F(N_{S_{rp}}(R_r) \times S_{\{rp+1, \ldots, 2m+k\}})$-modules.
\end{proposition}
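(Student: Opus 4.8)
The plan is to exhibit an explicit bijection between $\mathcal{A}_{2t}$ and a product of two bases, one for each tensor factor, and then check that this bijection is compatible with the group action. First I would fix, once and for all, the $2t$ orbits of $R_r$ that are to be contained in $\supp\mathcal{I}(\omega)$; by relabelling it costs nothing to assume these are the orbits of $z_1,\ldots,z_{2t}$, so the support of these orbits is $\{1,\ldots,2tp\}$, the remaining $r-2t$ orbits of $R_r$ occupy $\{2tp+1,\ldots,rp\}$, and the points moved by none of the $z_j$ are $\{rp+1,\ldots,2m+k\}$. The key combinatorial observation is that if $\overline{\omega}\in\mathcal{A}_{2t}$, then $\mathcal{I}(\omega)$ commutes with $R_r$ and permutes the $R_r$-orbits as blocks; since a $p$-cycle pairs a point to another point and $\mathcal{I}(\omega)$ is a fixed-point-free involution on $\supp\mathcal{I}(\omega)$, the $2t$ orbits it covers must be matched in $t$ pairs, each pair $\{z_a\text{-orbit}, z_b\text{-orbit}\}$ being swapped by $\mathcal{I}(\omega)$ via a bijection that intertwines the two $p$-cycles. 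Meanwhile the two-element sets of $\omega$ lying entirely in $\{rp+1,\ldots,2m+k\}$, together with the $k-(r-2t)p$ singletons of $\mathcal{T}(\omega)$ there, are completely unconstrained, and the $(r-2t)p$ points in $\{2tp+1,\ldots,rp\}$ must all be singletons of $\mathcal{T}(\omega)$ (they cannot be paired inside $\mathcal{S}(\omega)$, since $\mathcal{I}(\omega)$ restricted to them would have to be an $R_r$-equivariant involution, impossible for a single orbit).

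This analysis shows that the data of $\omega$ (up to the sign ambiguity already handled by Proposition~\ref{prop:Brauer}) splits as a pair: the part living on $\{1,\ldots,rp\}$, which is exactly an element of $\Omega^{(2^{tp}\;(r-2t)p)}$ whose associated involution centralizes $R_r$ and whose support meets every $R_r$-orbit, i.e.\ a basis element of $H^{(2^{tp}\;(r-2t)p)}(R_r)$ as described in Proposition~\ref{prop:Brauer}; and the part living on $\{rp+1,\ldots,2m+k\}$, which after the shift $i\mapsto i-rp$ is an arbitrary element of $\Omega^{(2^{m-tp}\;k-(r-2t)p)}$, i.e.\ a basis element of $H^{(2^{m-tp}\;k-(r-2t)p)}$. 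Thus the assignment
\[ \overline{\omega}\longmapsto \overline{\omega\!\restriction_{\{1,\ldots,rp\}}} \otimes \overline{\omega\!\restriction_{\{rp+1,\ldots,2m+k\}}^{\,\text{shifted}}} \]
is a bijection from $\mathcal{A}_{2t}$ onto a basis of $H^{(2^{tp}\;(r-2t)p)}(R_r)\boxtimes H^{(2^{m-tp}\;k-(r-2t)p)}$, and it extends to an $F$-linear isomorphism of the spans.

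It remains to check equivariance under $N_{S_{rp}}(R_r)\times S_{\{rp+1,\ldots,2m+k\}}$. This is where the sign bookkeeping must be done carefully rather than waved away: the basis on each side carries a choice of signs $s_\omega$ coming from Proposition~\ref{prop:Brauer}, and I would argue that these can be chosen compatibly — the sign needed to straighten $\omega$ into $\Omega$-normal form depends only on how the permutation reorders the $k$-tuple, which itself decomposes as the reordering inside $\mathcal{T}(\omega)\cap\{2tp+1,\ldots,rp\}$ (the business of the first factor) and the reordering inside $\mathcal{T}(\omega)\cap\{rp+1,\ldots,2m+k\}$ (the business of the second), so $s_\omega$ factors as a product of the two local signs. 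Given that, an element $(g,h)$ with $g\in N_{S_{rp}}(R_r)$ and $h\in S_{\{rp+1,\ldots,2m+k\}}$ acts on $\overline{\omega}$ by acting separately on the two halves, and the two halves do not interact because $g$ fixes $\{rp+1,\ldots,2m+k\}$ pointwise and $h$ fixes $\{1,\ldots,rp\}$ pointwise; comparing with the action on the tensor product gives the result. The main obstacle, as indicated, is verifying that the sign choices on the two sides can be made to match — this is the only genuinely delicate point, and I expect it to follow from the uniqueness part of Lemma~\ref{lemma:DeltaKIso}(i) applied to each half independently, exactly as in the proof of Lemma~\ref{lemma:PPermBasis}(i).
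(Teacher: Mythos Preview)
Your approach --- split each $\omega$ into its $\{1,\ldots,rp\}$-part and its $\{rp+1,\ldots,2m+k\}$-part, show this gives a linear bijection of bases, and verify equivariance by factoring the straightening sign --- is exactly the paper's. One point of confusion to correct: you write that you would ``fix, once and for all, the $2t$ orbits of $R_r$ that are to be contained in $\supp\mathcal{I}(\omega)$'', but this cannot be done. Different elements $\overline{\omega}\in\mathcal{A}_{2t}$ have different sets of $2t$ covered orbits, and $N_{S_{rp}}(R_r)$ permutes all $\binom{r}{2t}$ such choices; the basis of $H^{(2^{tp}\;(r-2t)p)}(R_r)$ likewise ranges over all of them. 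Fortunately the map you actually write down (restrict to $\{1,\ldots,rp\}$ and to its complement) does not depend on any such fixing and is correct as stated; simply delete that opening sentence and the phrase ``whose support meets every $R_r$-orbit'', which is not what characterizes the target basis.

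On the signs: the paper sidesteps the compatibility issue you anticipate by not using the sign-adjusted $p$-permutation bases of Proposition~\ref{prop:Brauer} at this stage. It works directly in the quotient model $F\Delta/K$ of Lemma~\ref{lemma:DeltaKIso}, defining $f(\omega+K) = (\alpha+J)\otimes(\alpha^+ + J^+)$ on cosets, where $\alpha$ and $\alpha^+$ are the two halves of $\omega$. Equivariance under $g\in N_{S_{2m+k}}(R_r)$ then reduces to observing that the unique permutation $h\in S_{\mathcal{T}(\omega g)}$ straightening $\omega g$ back into $\Omega$-form factors as $h = xx^+$ with $x\in S_{\mathcal{T}(\alpha g)}$ and $x^+\in S_{\mathcal{T}(\alpha^+ g)}$, so that $\sgn(h)=\sgn(x)\sgn(x^+)$. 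This is precisely your proposed sign factorization, but executed without ever choosing the~$s_\omega$ --- so the ``delicate point'' you identify dissolves once you work with $\overline{\omega}$ rather than $s_\omega\overline{\omega}$.
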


\newcommand{\+}{{\raisebox{0.5pt}{$\scriptscriptstyle +$}}}

\begin{proof}
In order to simplify the notation we shall write $K$ for 
the $FS_{2m+k}$-submodule $K^{(2^{m} \; k)}$ of 
$F\Delta^{(2^{m} \; k)}$
defined just before Lemma~\ref{lemma:DeltaKIso}. Recall
that if $\omega \in \Omega^{(2^{m} \; k)}$ 
then, by  definition, $\overline{\omega} =
\omega + K$.
Let $J = K^{(2^{tp} \; (r-2t)p)}$. 
It follows from Proposition~\ref{prop:Brauer}, in the same way as in
 Lemma~\ref{lemma:Rcorr},
 that
$H^{(2^{tp} \; (r-2t)p)}(R_r)$
has as a basis
\[ \{\omega + J : \omega \in \Omega^{(2^{tp} \;
(r-2t)p)},\ \mathcal{I}(\omega) \in \C_{S_{rp}}(R_r)\}. \]
Define $\Delta^\+$ by shifting  the entries in each of the elements
of $\Delta^{(2^{m-tp},k-(r-2t)p)}$ by $rp$, so that
$F\Delta^\+$ is an $FS_{\{rp+1,\ldots, 2m+k\}}$-module,
and similarly
define $\Omega^\+ \subseteq \Delta^\+$ by
shifting 
$\Omega^{(2^{m-tp}, k-(r-2t)p)}$
and  $J^\+ \subseteq F\Delta^\+$ 
by shifting the basis elements of $K^{(2^{m-tp},k-(r-2t)p)}$. Then, by
Lemma~\ref{lemma:DeltaKIso}, $H^\+ = F\Delta^\+ / J^\+$ 
is an $FS_{\{rp+1,\ldots, 2m+k\}}$-module
with basis 
\[ \{ \omega^\+ + J^\+ : \omega^\+ \in \Omega^\+\}. \] 

We shall  define a linear map
$f : \langle \mathcal{A}_{2t} \rangle \rightarrow 
H^{(2^{tp} \; (r-2t)p)} \boxtimes H^\+$. 
Given $\omega + K \in \mathcal{A}_{2t}$ where 
\[ \omega =  \bigl\{ \{i_1, i'_1\}, \ldots, \{i_m,i'_m\}, (j_1, \ldots, j_k )\}
 \in \Omega^{(2^m \; k)} 
\]
and the notation is chosen so that 
\[ \{ i_1, i_1', \ldots, i_{tp}, i'_{tp},
j_1, \ldots, j_{(r-2t)p} \} = \{1,\ldots,rp \}, \]
we define
$(\omega + K)f = (\alpha + J) \otimes (\alpha^\+ + J^\+)$ 
where 
\begin{align*}
\alpha  &= \bigl\{ \{i_1, i'_1\}, \ldots, \{i_{tp},i'_{tp}\},
(j_1,\ldots,j_{(r-2t)p}) \bigr\} \\
\alpha^\+ &= \bigl\{ \{i_{tp+1}, i'_{tp+1}\}, \ldots, \{i_{mp},i'_{mp}\},
(j_{(r-2t)p+1},\ldots,j_k) \bigr\} .
\end{align*}
This defines a bijection between
$\mathcal{A}_{2t}$
and the basis for
$H^{(2^{tp} \; (r-2t)p)}(R_r) \boxtimes H^\+$ 
afforded by the bases for $H^{(2^{tp} \; (r-2t)p)}(R_r)$ and $H^\+$ 
just defined.
The map~$f$ is therefore a well-defined linear isomorphism.

Suppose that $\omega \in \Omega^{(2^m \; k)}$ is as above and let $g \in N_{S_{2m+k}}(R_r)$.
Let $h \in S_{\mathcal{T}(\omega g)}$ be the unique permutation such
that $(j_1 gh, \ldots, j_k gh)$ is increasing.
Let $\omega^\star = \omega g h$, 
so $\omega^\star \in \Omega^{(2^m \; k)}$ and
$\overline{\omega} g = \sgn(h) \overline{\omega^\star}$.
Since
$g$ permutes $\{1,\ldots,rp\}$ we may factorize $h$ as $h=xx^\+$ where
$x \in S_{\mathcal{T}(\alpha)}$ and $x^\+ \in S_{\mathcal{T}(\alpha^\+)}$. 
By definition of $f$ we have
\[ (\omega^\star + K) f = (\alpha gx + J) \otimes (\alpha^\+ gx^\+ + J^\+). \]
Hence
\begin{align*}  (\omega + K)gf &= \sgn(h) (\omega^\star + K)f  \\
&= \sgn(h)\sgn(x)\sgn(x^\+) (\alpha g + J) \otimes (\alpha^+g + J^\+)  \\ &=
(\omega + K)f g.
\end{align*}
The map $f$ is therefore a homomorphism of $FN_{S_{2m+k}}(R_r)$-modules. Since 
$f$ is a linear isomorphism, the proposition follows.
\end{proof}

\subsection*{Second step:~the vertex of $H^{(2^{tp} \; (r-2t)p)}(R_r)$}
Fix $r \in \N$ and $t \in \N_0$ such that $2t \le r$.
In the second step  we
show that the $FN_{S_{rp}}(R_r)$-module $H^{(2^{tp} \; (r-2t)p)}(R_r)$
is indecomposable and that it has the subgroup $Q_t$ defined below
as a vertex.

To simplify the notation, we denote $H^{(2^{tp}\; (r-2t)p)}(R_r)$ by $M$.
Let $C$ and~$E_t$ be the elementary abelian $p$-subgroups of $N_{S_{rp}}(R_r)$
defined by
\begin{align*} 
C &= \langle z_1 \rangle \times \langle z_2 \rangle \times 
\cdots \times\langle z_r \rangle, \\
E_t &= \langle z_1z_{t+1} \rangle \times \cdots
\times \langle z_{t}z_{2t} \rangle \times \langle
z_{2t+1} \rangle \times \cdots \times \langle z_r \rangle,
\end{align*} 
where the $z_j$ are the $p$-cycles defined at the start of this section.
For $i \in \{1,\ldots, tp\}$, let $i' = i+tp$, and for $g \in S_{\{1,\ldots, tp\}}$,
let $g' \in S_{\{tp+1,\ldots, 2tp\}}$ be the permutation defined by $i'g' = (ig)'$.
Note that if $1 \le j \le t$ then $z_j' = z_{j+t}$.
Let $L$ be the group consisting of all permutations $gg'$ 
where~$g$ lies in a Sylow $p$-subgroup
of $S_{\{1,\ldots, tp\}}$ with base group $\langle z_1, \ldots, z_t \rangle$, chosen 
so that $z_1\cdots z_t$ is in its centre.
Let $L^\+$ be a Sylow $p$-subgroup
of $S_{\{2tp+1,\ldots, rp\}}$ with base group $\langle z_{2t+1}, \ldots, z_r \rangle$,
chosen so that $z_{2t+1}\cdots z_r$ is in its centre. 
(The existence of such Sylow $p$-subgroups follows from
the construction of Sylow $p$-subgroups of symmetric groups
as iterated wreath products in \cite[4.1.19 and 4.1.20]{JK}.)
Let
\[ Q_t = L \times L^\+. \]
Observe that $Q_t$ normalizes $C$ and so $\langle C, Q_t \rangle$
is a $p$-group contained in $\C_{S_{rp}}(R_r)$. 
Let $P$ be a Sylow $p$-subgroup of $\C_{S_{rp}}(R_r)$
containing $\langle Q_t, C\rangle$. 
Since there is a Sylow $p$-subgroup of~$S_{rp}$ containing
$R_r$ in its centre,~$P$ is also a Sylow $p$-subgroup of $S_{rp}$.
Clearly $E_t \le C$ and
\[ R_r \le E_t \le Q_t \le P \le \C_{S_{rp}}(R_r). \]

If $t=0$ then $M$
is the sign representation of $N_{S_{rp}}(R_r)$, with $p$-permutation
basis $\mathcal{B} = \{ \omega \}$ where $\omega$ is the unique
element of $\Omega^{(2^0 \; rp)}$. It is then clear
that $M$ has the Sylow $p$-subgroup~$Q_0$ of $\C_{S_{rp}}(R_r)$ as a vertex.
We may therefore assume that $t \in \N$ for the rest of this step.

By Proposition~\ref{prop:Brauer} there is a choice of signs
$s_\omega \in \{+1,-1\}$ for $\omega \in \Omega^{(2^{tp} \; (r-2t)p)}$ such that
\[ \mathcal{B} = \{s_\omega \overline{\omega} : \omega \in \Omega^{(2^{tp} \;
(r-2t)p)}, \  
\mathcal{I}(\omega) \in \C_{S_{rp}}(R_{r}) \} \]
is a $p$-permutation basis for $M$ with respect to $P$.
Let 
\[ \mathcal{O}_j = \{(j-1)p+1,\ldots, jp \} \] 
be the orbit
of $z_j$ on $\{1,\ldots,rp\}$ of length $p$. 
If $\mathcal{I}(\omega) \in \C_{S_{rp}}(R_{r})$
then
$\mathcal{I}(\omega)$ permutes these
orbits as blocks for its action; let 
\[ \mathcal{I}_\mathcal{O}(\omega) \in S_{\{ \mathcal{O}_1, \ldots, \mathcal{O}_r \}} \]
be the involution induced by the action of $\mathcal{I}(\omega)$
on the set of orbits. 

\begin{proposition}\label{prop:Evertex}
The $FN_{S_{rp}}(R_r)$-module $M$ is indecomposable and has a vertex containing $E_t$.
\end{proposition}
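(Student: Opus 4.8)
The plan is to prove the two assertions separately. For the vertex, I would use the $p$-permutation basis $\mathcal{B}$ furnished by Proposition~\ref{prop:Brauer} together with Corollary~\ref{cor:Brauer}: to show that some indecomposable summand of $M$ has a vertex containing $E_t$, it suffices to exhibit an element $s_\omega\overline{\omega}\in\mathcal{B}$ fixed by $E_t$, i.e.\ (via Lemma~\ref{lemma:PPermBasis}(ii) and the $\mathcal{I}$-dictionary) an $\omega\in\Omega^{(2^{tp}\;(r-2t)p)}$ with $\mathcal{I}(\omega)\in\C_{S_{rp}}(R_r)$ and $\mathcal{I}(\omega)$ commuting with every generator of $E_t$. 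The natural candidate is the $\omega$ whose $\mathcal{I}(\omega)$ pairs up the orbits $\mathcal{O}_j$ with $\mathcal{O}_{j+t}$ for $1\le j\le t$ (matching $(j-1)p+i$ with $(j+t-1)p+i$ for each $i$), so that $\mathcal{I}_\mathcal{O}(\omega)$ is exactly the involution $(\mathcal{O}_1\,\mathcal{O}_{t+1})\cdots(\mathcal{O}_t\,\mathcal{O}_{2t})$ underlying the definition of $E_t$; the remaining points $\{2tp+1,\dots,rp\}$ form the $k$-tuple $\mathcal{T}(\omega)$. One checks directly that $\mathcal{I}(\omega)$ commutes with each $z_jz_{j+t}$ and each $z_\ell$ for $\ell>2t$, hence with all of $E_t$, and also with $R_r=\langle z_1\cdots z_r\rangle$ so that $\overline{\omega}$ really lies in the basis $\mathcal{B}$. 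Corollary~\ref{cor:Brauer} then gives an indecomposable summand of $M$ with a vertex containing $E_t$.

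For indecomposability, the cleanest route is to identify $M$ explicitly as a module for $N_{S_{rp}}(R_r)$ and recognise it (up to a summand-free situation) as a transitive permutation module, then invoke a lemma in the spirit of Lemma~\ref{lemma:perm}. Concretely, $\mathcal{B}$ is a $p$-permutation basis with respect to $P$, but in fact I expect $N_{S_{rp}}(R_r)$ to act transitively on the set $\mathcal{B}$ up to sign: the orbits $\mathcal{O}_1,\dots,\mathcal{O}_r$ are permuted transitively by $N_{S_{rp}}(R_r)$, which acts as (a subgroup surjecting onto) the full symmetric group $S_r$ on the set of orbits, and within this the $\mathcal{I}_\mathcal{O}(\omega)$ range over all fixed-point-free-on-$\{1,\dots,2t\}$ involutions of the expected shape; any two such are conjugate. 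Thus $M$ is, as an $FN_{S_{rp}}(R_r)$-module, a twist by a linear (sign) character of a transitive permutation module $F\!\ind_{H}^{N_{S_{rp}}(R_r)}(\lambda)$ for a suitable point stabiliser $H$ and linear character $\lambda$ of $H$. Since $p$ is odd, such a monomial module is indecomposable precisely when the relevant $\mathrm{Hom}$-space is one-dimensional; here $\mathrm{End}(M)\cong \lambda^{H}\!\!\downarrow\!\cap\!\uparrow$ and a Mackey-formula computation should show this endomorphism ring is local (indeed that $M$ has simple head), because the stabiliser $H$ contains a Sylow $p$-subgroup of $N_{S_{rp}}(R_r)$ modulo the sign obstruction and the double cosets contributing to $\mathrm{End}(M)$ collapse.

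The main obstacle I anticipate is the indecomposability argument: making precise the claim that $M$ is a (sign-twisted) transitive permutation module for $N_{S_{rp}}(R_r)$ requires carefully unwinding the structure of $N_{S_{rp}}(R_r)\cong (C_p\wr S_r)$-type group, describing the stabiliser $H$ of the chosen basis vector $\overline{\omega}$ together with the sign character $\lambda$ by which $H$ acts, and then running the Mackey/double-coset computation to show $\mathrm{End}_{FN_{S_{rp}}(R_r)}(M)$ is local. An alternative, possibly smoother, is to argue that $M$ has a simple head directly: $N_{S_{rp}}(R_r)$ contains the $p$-group $Q_t$, one shows $M\!\downarrow_{Q_t}$ has a unique trivial-quotient composition factor (using Lemma~\ref{lemma:PPermBasis}(ii) to see which $\overline{\omega}$ are $Q_t$-fixed — this anticipates the later use of $Q_t$ as the vertex), and then lifts indecomposability from the restriction. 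Either way, once indecomposability is in hand the vertex-contains-$E_t$ statement from the first paragraph completes the proof; the sharper identification of the vertex as exactly $Q_t$ is deferred to the subsequent results in this step.
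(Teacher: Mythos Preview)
Your exhibition of the explicit $\omega^\star$ with $\mathcal{I}(\omega^\star)=(1,tp+1)\cdots(tp,2tp)$ fixed by $E_t$ is correct and is exactly the element the paper uses. The gap is in the indecomposability argument: neither of your two proposed routes is carried out, and both are harder than what the paper actually does. The Mackey/endomorphism-ring computation on the full group $N_{S_{rp}}(R_r)$ would require identifying the point-stabiliser and its sign character and then controlling the double cosets, which is feasible but messy. The $Q_t$-restriction route is more problematic: there is no reason to expect $M\!\downarrow_{Q_t}$ to be indecomposable (equivalently, that $\mathcal{B}^{Q_t}$ is a singleton), so ``unique trivial quotient'' is not available in general.

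The paper's approach is quite different and avoids both difficulties by reversing your order of argument: it \emph{first} strengthens the vertex statement from ``some summand'' to ``every summand'', and \emph{then} deduces indecomposability almost for free via the Brauer correspondent $M(E_t)$. Concretely: restrict $M$ to the elementary abelian subgroup $C=\langle z_1\rangle\times\cdots\times\langle z_r\rangle$ and decompose $\mathcal{B}=\bigsqcup_h\mathcal{B}(h)$ according to the induced orbit-involution $h=\mathcal{I}_\mathcal{O}(\omega)$. Each $\langle\mathcal{B}(h)\rangle$ is a single $C$-orbit of size $p^t$, isomorphic to $F\!\uparrow_{E_t}^C$ and hence indecomposable with vertex $E_t$ by Lemma~\ref{lemma:perm}. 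The pieces for different $h$ are permuted by $N_{S_{rp}}(R_r)$, so \emph{every} indecomposable $FN_{S_{rp}}(R_r)$-summand $U$ of $M$ has $U\!\downarrow_C$ a sum of such pieces, hence $U(E_t)\neq 0$, hence $U$ has a vertex containing $E_t$. Now compute $M(E_t)$: a basis element $s_\omega\overline{\omega}$ is $E_t$-fixed if and only if $\mathcal{I}_\mathcal{O}(\omega)=h^\star$, so $M(E_t)=\langle\mathcal{B}(h^\star)\rangle$, which we already know is indecomposable as an $FC$-module, a fortiori as a module for $N_{N_{S_{rp}}(R_r)}(E_t)$. Since every summand of $M$ contributes nontrivially to $M(E_t)$, and $M(E_t)$ is indecomposable, $M$ is indecomposable.

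This is the missing idea: use the \emph{abelian} subgroup $C$ (not $Q_t$, not the full normaliser) to get uniform control over the $C$-restriction of every summand, and then let the Brauer correspondence at $E_t$ do the counting. It sidesteps any Mackey or head computation.
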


\begin{proof}
For each involution
$h \in S_{\{\mathcal{O}_1,\ldots,\mathcal{O}_r\}}$ 
that fixes exactly $r-2t$ of the orbits $\mathcal{O}_j$, and so moves the other $2t$, define
\[ \mathcal{B}(h) = \{ s_\omega \overline{\omega} \in \mathcal{B} :
\mathcal{I}_\mathcal{O}(\omega) = h \}. \]
Clearly there is a vector space decomposition
\[  M = \bigoplus_h \langle \mathcal{B}(h) \rangle. \]
If $g \in C$ then $\mathcal{I}_{\mathcal{O}}(\omega g) = \mathcal{I}_{\mathcal{O}}(\omega)$ since
$g$ acts trivially on the set of orbits $\{ \mathcal{O}_1, \ldots, \mathcal{O}_r\}$.
Therefore~$C$ permutes the elements of each $\mathcal{B}(h)$.  

Let 
\[ h^\star = (\mathcal{O}_1, \mathcal{O}_{t+1}) \cdots (\mathcal{O}_{t},
\mathcal{O}_{2t} ) \in S_{\{\mathcal{O}_1,\ldots,\mathcal{O}_r\}} \]
and let $s_{\omega^\star} \overline{\omega^{\star}} \in \mathcal{B}(h^\star)$ be the 
unique basis element such that 
\[ \mathcal{I}(\omega^\star) = (1, tp+1)(2, tp+2)
\cdots (tp,2tp)\]
(Equivalently, $\mathcal{I}(\omega^\star)$ is the unique involution in $S_{2tp}$
that preserves
the relative orders of the elements in $\mathcal{O}_j$ for $1\le j \le 2t$
and satisfies $\mathcal{I}_\mathcal{O}(\omega^\star) = h^\star$.)
By Lemma~\ref{lemma:PPermBasis}(ii) we
see that the stabiliser of $\overline{\omega^\star}$ in~$C$ is the subgroup~$E_t$.
Let $s_\delta\overline{\delta}\in B(h^\star)$, 
then $\mathcal{I}(\delta)=\mathcal{I}(\omega^\star)=h^\star.$
Without loss of generality we have that $$\mathcal{I}(\delta)=(1, i_1)(2, i_2)\cdots (tp, i_{tp}),$$
where $\{i_{(j-1)p+1},i_{(j-1)p+2},\ldots, i_{jp}\}=\mathcal{O}_{t+j}$ for all 
$j\in\{1,\ldots,t\}$.
Hence, there exist $k_1,k_2,\ldots, k_t\in\{0,1,\ldots,p-1\}$ and a permutation 
\[ g=z_{t+1}^{k_1}z_{t+2}^{k_2}\cdots z_{2t}^{k_t}, \]
such that 
$(tp+(j-1)p+1)g = i_{(j-1)p+1}$
for all 
$j \in \{1,\ldots, t\}$.
Since $s_\delta\overline{\delta}$ is fixed by $R_r$, it follows that
 $s_{\omega^\star}\overline{\omega^\star}g=s_\delta\overline{\delta}$.
Therefore any basis element in $\mathcal{B}(h^\star)$ can be obtained from~$\omega^\star$ by permuting
the members of $\mathcal{O}_{t+1}$, \ldots,~$\mathcal{O}_{2t}$ by an element of $C$. 
It follows that $\mathcal{B}(h^\star)$ has size $p^t$ and is equal to 
the orbit of $s_{\omega^\star} \overline{\omega^\star}$ under~$C$.
Therefore
there is an isomorphism of $FC$-modules $\langle \mathcal{B}(h^\star) \rangle
\cong  F\!\ind_{E_t}^C$. By Lemma~\ref{lemma:perm}, $\langle \mathcal{B}(h^\star)
\rangle$ is an indecomposable $FC$-module with vertex~$E_t$.

For each involution $h \in S_{\{\mathcal{O}_1,\ldots,\mathcal{O}_r\}}$, the 
$FC$-submodule $\langle \mathcal{B}(h) \rangle$ of~$M$ is sent to
$\langle \mathcal{B}(h^\star) \rangle$
by an element of~$N_{S_{rp}}(R_r)$ normalizing~$C$. 
It follows that if~$U$ is any
summand of~$M$, now considered as an $FN_{S_{rp}}(R_r)$-module,
then the restriction of $U$ to~$C$ is isomorphic to a direct sum
of indecomposable $p$-permutation $FC$-modules with vertices conjugate
in~$N_{S_{rp}}(R_r)$ to~$E_t$. Applying Theorem~\ref{thm:BrauerCorr}
to these summands, 
we see that
there exists $g \in N_{S_{rp}}(R_r)$ such that $U(E_t^g) \not= 0$.
Now by Theorem~\ref{thm:BrauerCorr}, this time applied to
the $FN_{S_{rp}}(R_r)$-module $U$, we see that~$U$ has a
vertex containing~$E_t^g$. Hence every indecomposable summand of $M$
has a vertex containing~$E_t$.

We now calculate the Brauer correspondent $M(E_t)$.
Let $s_\omega \overline{\omega} \in \mathcal{B}$. It follows
from Lemma~\ref{lemma:PPermBasis}(ii)
that $\overline{\omega}$ is fixed by $E_t$
if and only if $\mathcal{I}_\mathcal{O}(\omega)$ is 
the involution~$h^\star$. Hence, by Corollary~\ref{cor:Brauer}
and Lemma~\ref{lemma:PPermBasis},
we have $M(E_t) = \langle \mathcal{B}(h^\star) \rangle$. 
We have already seen that $\langle \mathcal{B}(h^\star)\rangle$ 
is indecomposable as an $FC$-module. Since~$C$ normalizes
$E_t$ and centralizes $R_r$, it follows that~$M(E_t)$ is indecomposable as a module
for the normalizer of~$E_t$ in~$N_{S_{rp}}(R_r)$. 
We already know that every indecomposable summand of $M$
has a vertex containing $E_t$, so
it 
follows from Corollary~\ref{cor:Brauer} that $M$ is indecomposable.
\end{proof}

Note that if $\omega^\star$ is as defined in the proof of Proposition~\ref{prop:Evertex}, then
$Q_t$ is a Sylow $p$-subgroup of $\C_{S_{rp}}\bigl(\mathcal{I}(\omega^\star)\bigr)\cong (S_2\wr S_{tp})\times S_{(r-2t)p}$.
Using this observation and
the $p$-permutation basis $\mathcal{B}$ for~$M$ it is now straightforward to
prove the following proposition.

\begin{proposition}\label{prop:Qvertex}
The indecomposable $FN_{S_{rp}}(R_{r})$-module $M$ has $Q_t$ as a vertex.
\end{proposition}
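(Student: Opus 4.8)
The plan is to determine the vertex of $M = H^{(2^{tp}\;(r-2t)p)}(R_r)$ by a squeeze argument. By Proposition~\ref{prop:Evertex} the module $M$ is indecomposable, so it has a vertex $Q$, well defined up to conjugacy in $N_{S_{rp}}(R_r)$, and it suffices to show $Q$ is conjugate to $Q_t$. I would prove the two facts: \emph{(a)} $Q_t$ is contained in a conjugate of $Q$; and \emph{(b)} $|Q|\le |Q_t|$. Together these force $|Q|=|Q_t|$ and hence $Q$ conjugate to $Q_t$. The bridge between vertices and the $p$-permutation basis $\mathcal{B}$ of $M$ (with respect to the Sylow $p$-subgroup $P\le\C_{S_{rp}}(R_r)$ fixed above) is Corollary~\ref{cor:Brauer}, and the key combinatorial input is that, by Lemma~\ref{lemma:PPermBasis}(ii) and the remark preceding Proposition~\ref{prop:Brauer}, the stabiliser in $P$ of a basis vector $\overline{\omega}$ equals $P\cap\C_{S_{rp}}(\mathcal{I}(\omega))$. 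The case $t=0$, where $M$ is the sign representation of $N_{S_{rp}}(R_r)$ with vertex $Q_0$, has already been settled, so I take $t\ge 1$; the argument in fact runs uniformly.

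For \emph{(a)} I would invoke the remark following Proposition~\ref{prop:Evertex}: $Q_t$ is a Sylow $p$-subgroup of $\C_{S_{rp}}(\mathcal{I}(\omega^\star))\cong (S_2\wr S_{tp})\times S_{(r-2t)p}$, and in particular $Q_t$ centralises $\mathcal{I}(\omega^\star)$. Hence every element of $Q_t$ fixes $\overline{\omega^\star}$, so $s_{\omega^\star}\overline{\omega^\star}\in\mathcal{B}^{Q_t}$. Since $Q_t\le P$ by construction, Corollary~\ref{cor:Brauer} shows that $M$, being its own unique indecomposable summand, has a vertex containing $Q_t$; that is, $Q_t\le {}^{g}Q$ for some $g\in N_{S_{rp}}(R_r)$.

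For \emph{(b)} I would first note that $P$ is a Sylow $p$-subgroup of $N_{S_{rp}}(R_r)$: it is Sylow in $\C_{S_{rp}}(R_r)$, and $N_{S_{rp}}(R_r)/\C_{S_{rp}}(R_r)$ embeds in $\mathrm{Aut}(R_r)$, a group of order $p-1$ coprime to $p$. Replacing $Q$ by a conjugate we may assume $Q\le P$, and then Corollary~\ref{cor:Brauer} applied with $R=Q$ gives $\mathcal{B}^Q\neq\varnothing$; choose $\overline{\omega_0}\in\mathcal{B}$ fixed by $Q$, so that $Q\le\C_{S_{rp}}(\mathcal{I}(\omega_0))$. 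As $\omega_0\in\Omega^{(2^{tp}\;(r-2t)p)}$, the involution $\mathcal{I}(\omega_0)$ is a product of $tp$ disjoint transpositions, whence $\C_{S_{rp}}(\mathcal{I}(\omega_0))\cong (S_2\wr S_{tp})\times S_{(r-2t)p}$; since $p$ is odd the $p$-part of its order is $|L|\cdot|L^\+|=|Q_t|$ (the factor $S_2\wr S_{tp}$ contributes only the $p$-part of $(tp)!$). Therefore $|Q|\le|Q_t|$.

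Combining \emph{(a)} and \emph{(b)} gives $|Q_t|\le|{}^{g}Q|=|Q|\le|Q_t|$, so ${}^{g}Q=Q_t$ and $Q_t$ is a vertex of $M$. I do not anticipate a real obstacle beyond two points that need care: the conjugacy bookkeeping around Corollary~\ref{cor:Brauer} — ensuring that the single Sylow subgroup $P$ simultaneously contains $Q_t$, a suitable conjugate of $Q$, and supports the $p$-permutation basis $\mathcal{B}$ — and the order computation for the centraliser of a fixed-point-free-on-$2tp$-points involution, where the oddness of $p$ is exactly what makes the wreath-product factor harmless.
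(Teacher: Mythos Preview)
Your proposal is correct and follows essentially the same approach as the paper: use Corollary~\ref{cor:Brauer} together with Lemma~\ref{lemma:PPermBasis}(ii) to translate the vertex question into stabilisers of basis vectors in $\mathcal{B}$, observe that $Q_t$ fixes $\overline{\omega^\star}$ so a vertex contains $Q_t$, and bound the order of any vertex by $|Q_t|$ since every centraliser $\C_{S_{rp}}(\mathcal{I}(\omega))$ is conjugate to $\C_{S_{rp}}(\mathcal{I}(\omega^\star))$ and $Q_t$ is Sylow there. The paper's proof is just a more compressed version of the same squeeze argument; your extra remarks about $P$ being Sylow in $N_{S_{rp}}(R_r)$ and the explicit $p$-part calculation are sound elaborations of details the paper leaves implicit.
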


\begin{proof}
By Corollary~\ref{cor:Brauer}, if $Q$ is subgroup
of $P$ maximal subject to $\mathcal{B}^Q \not= \varnothing$ then $Q$ is a vertex of $M$.
By Lemma~\ref{lemma:PPermBasis}(ii), 
a basis element $s_{\omega} \overline{\omega} \in B$ is fixed by a $p$-subgroup $Q$
of $P$ if and only if
$Q \le \C_{S_{rp}}\bigl( \mathcal{I}(\omega) \bigr)$. 
Taking $\omega = \omega^\star$ we see that there is a vertex of $M$ containing $Q_t$.
On the other hand, $\C_{S_{rp}}\bigl( \mathcal{I}(\omega) \bigr)$ is conjugate
in $S_{rp}$ to $\C_{S_{rp}}\bigl( \mathcal{I}(\omega^\star) \bigr)$, and so
if $Q \le \C_{S_{rp}}\bigl( \mathcal{I}(\omega) \bigr)$ then
$|Q| \le |Q_t|$. It follows that $Q_t$ is a vertex of $M$.
\end{proof}

\subsection*{Third step:~proof of Theorem~\ref{thm:vertices}}
For the remainder of the proof we shall 
regard $S_{(r-2t)p}$ as acting
on $\{2tp+1,\ldots, rp\}$. We  denote by $D_t$ the $p$-group $C\cap N_{S_{rp}}(Q_t)$.
Notice that $\langle D_t, Q_t\rangle$ is a $p$-group since is a subgroup of $\langle C, Q_t\rangle\leq P$.
We shall need the following lemma to work with modules for $N_{S_{rp}}(Q_t)$.

\begin{lemma}\label{lemma:Mark1}
The unique Sylow $p$-subgroup of $N_{S_{rp}}(Q_t)$ is the subgroup
$\langle D_t, Q_t \rangle$ of $P$.
\end{lemma}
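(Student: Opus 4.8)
The plan is to identify $N_{S_{rp}}(Q_t)$ explicitly using the direct product structure $Q_t = L \times L^\+$, where $L \le S_{\{1,\ldots,2tp\}}$ and $L^\+ \le S_{\{2tp+1,\ldots,rp\}}$ are the Sylow $p$-subgroups fixed earlier. First I would note that since $L$ and $L^\+$ act on disjoint sets whose union is $\{1,\ldots,rp\}$, any element of $N_{S_{rp}}(Q_t)$ must preserve the partition of the support into the $L$-part $\{1,\ldots,2tp\}$ and the $L^\+$-part $\{2tp+1,\ldots,rp\}$ (these are the unions of $Q_t$-orbits of the two possible sizes $2t$-orbit-structure versus singleton-structure, or more simply the supports of the two transitive constituents are distinguishable by size when $2t < r$, and when $2t = r$ one checks directly). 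Hence $N_{S_{rp}}(Q_t) = N_{S_{\{1,\ldots,2tp\}}}(L) \times N_{S_{\{2tp+1,\ldots,rp\}}}(L^\+)$, and it suffices to compute the Sylow $p$-subgroup of each normalizer of a Sylow $p$-subgroup of a symmetric group.

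Next I would invoke the standard fact (from the iterated wreath product description of Sylow $p$-subgroups in \cite[4.1.19--4.1.20]{JK}) that if $P_n$ is a Sylow $p$-subgroup of $S_n$ then $P_n$ is self-normalizing in $S_n$, i.e. $N_{S_n}(P_n) = P_n$. Applying this to $L$ inside $S_{\{1,\ldots,2tp\}}$ (here $2tp$ need not be a power of $p$, but the self-normalizing property still holds for the Sylow $p$-subgroup, which is a direct product of iterated wreath products according to the base-$p$ digits of $2tp$) gives $N_{S_{\{1,\ldots,2tp\}}}(L) = L$, and likewise $N_{S_{\{2tp+1,\ldots,rp\}}}(L^\+) = L^\+$. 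Therefore $N_{S_{rp}}(Q_t)$ has order exactly $|L|\,|L^\+| = |Q_t|$ times a $p'$-number — wait, in fact self-normalizing gives $N_{S_{rp}}(Q_t) = Q_t$ directly if we use the full symmetric groups; but the lemma asserts the Sylow $p$-subgroup is $\langle D_t, Q_t\rangle$, which is strictly larger than $Q_t$ in general. So the subtlety is that the relevant normalizer is taken after the first-step reductions force us into $\C_{S_{rp}}(R_r)$ rather than all of $S_{rp}$: one must recheck what $N_{S_{rp}}(Q_t)$ actually is, and the point is that $C$ normalizes $Q_t$ (since $C$ normalizes each $\langle z_j\rangle$ and $Q_t$ has base group $\langle z_1,\ldots,z_t,z_{2t+1},\ldots,z_r\rangle$-ish), so $D_t = C \cap N_{S_{rp}}(Q_t)$ contributes genuinely. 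Thus I would argue: $\langle D_t, Q_t\rangle$ is a $p$-subgroup of $N_{S_{rp}}(Q_t)$ (it lies in $\langle C, Q_t\rangle \le P$, as noted), and for the reverse containment I would bound $|N_{S_{rp}}(Q_t)|_p$ by analyzing the action on $Q_t$-orbits, showing any $p$-element normalizing $Q_t$ differs from an element of $Q_t$ by an element of $D_t$.

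The key steps in order: (1) decompose $N_{S_{rp}}(Q_t)$ as a direct product over the two $Q_t$-invariant blocks $\{1,\ldots,2tp\}$ and $\{2tp+1,\ldots,rp\}$; (2) on each block, describe the normalizer of the relevant Sylow $p$-subgroup of the ambient symmetric group, using that a point stabilizer / wreath structure forces any normalizing $p$-element to lie in a prescribed $p$-group; (3) observe $C \le N_{S_{rp}}(Q_t)$ so that $D_t$ is nontrivial in general, and that $\langle D_t, Q_t\rangle$ is a $p$-group by the earlier observation $\langle C, Q_t\rangle \le P$; (4) match orders to conclude $\langle D_t, Q_t\rangle$ is a Sylow $p$-subgroup, and uniqueness follows since it is normal in $N_{S_{rp}}(Q_t)$ — indeed $D_t$ centralizes $L/\langle z_1\cdots z_t\rangle$-type quotients appropriately, or more directly one shows $N_{S_{rp}}(Q_t)$ normalizes $\langle D_t, Q_t\rangle$.

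The main obstacle I anticipate is step (4): proving that $\langle D_t, Q_t\rangle$ is actually \emph{normal} in $N_{S_{rp}}(Q_t)$ (hence the \emph{unique} Sylow $p$-subgroup), rather than merely a Sylow $p$-subgroup. This requires understanding how the $p'$-part of $N_{S_{rp}}(Q_t)$ acts: on the $L^\+$-block this $p'$-part permutes the $r-2t$ cycles $z_{2t+1},\ldots,z_r$ and acts on each $\langle z_j\rangle \cong \Z/p$ by scalars, and one must check it normalizes $D_t^\+ \cdot L^\+$; on the $L$-block the structure is governed by the wreath-product automorphisms of the iterated wreath product $L$, where the relevant $p'$-part comes from $\Aut(\Z/p) \cong \Z/(p-1)$ acting diagonally and from permutations among the $z_j z_{j+t}$ pairs — here one must verify it preserves $\langle D_t, Q_t\rangle$. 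I would handle this by the explicit wreath-product description of $\C_{S_{rp}}(R_r)$ and $N_{S_{rp}}(R_r)$ and tracking the normalizer of $Q_t$ within it, which is somewhat intricate but elementary; I would state the group-theoretic facts about normalizers of Sylow subgroups of symmetric groups as known and reference \cite{JK}.
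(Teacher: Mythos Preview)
Your proposal has a genuine gap stemming from a misreading of the definition of $L$. The group $L$ is \emph{not} a Sylow $p$-subgroup of $S_{\{1,\ldots,2tp\}}$; it is the diagonal copy $\{gg' : g \in P_0\}$ of a fixed Sylow $p$-subgroup $P_0$ of $S_{\{1,\ldots,tp\}}$, where $g\mapsto g'$ is the shift by $tp$. Thus $|L| = |S_{tp}|_p$, which is strictly smaller than $|S_{2tp}|_p$, and the self-normalizing property of Sylow subgroups of symmetric groups simply does not apply to $L$ inside $S_{2tp}$. This is exactly why $\langle D_t, Q_t\rangle$ is larger than $Q_t$: the extra $p$-part comes from elements of $C$ acting on $\{1,\ldots,2tp\}$. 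Your attempted fix, that the normalizer is secretly taken inside $\C_{S_{rp}}(R_r)$, is incorrect: the lemma really is about $N_{S_{rp}}(Q_t)$. Your claim that $C \le N_{S_{rp}}(Q_t)$ is also false; for instance $(gg')^{z_1} = g^{z_1}g'$, which lies in $L$ only when $g^{z_1}=g$, and $z_1$ is not central in $P_0$ once $t>1$.

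The missing idea is the one the paper uses. After the factorization $N_{S_{rp}}(Q_t) = N_{S_{2tp}}(L)\times N_{S_{(r-2t)p}}(L^+)$ (your step~(1), which is fine), the $L^+$-factor is handled by self-normalization. For the $L$-factor one takes a $p$-element $g \in N_{S_{2tp}}(L)$, observes that $g$ permutes the paired blocks $\mathcal{O}_j\cup\mathcal{O}_{j+t}$ for $1\le j\le t$, and corrects by an element of $L$ so that $g$ fixes each such block setwise. Then on each block of size $2p$ the element $g$ together with $z_jz_{j+t}$ generates a $p$-subgroup of $S_{2p}$; since $p$ is odd, the unique Sylow $p$-subgroup of $S_{2p}$ containing the product of two disjoint $p$-cycles is $\langle z_j\rangle\times\langle z_{j+t}\rangle$, forcing $g\in C$ and hence $g\in D_t$. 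Showing that \emph{every} $p$-element of $N_{S_{rp}}(Q_t)$ lies in the $p$-group $\langle D_t,Q_t\rangle$ gives uniqueness for free, so your anticipated obstacle in step~(4) is not where the difficulty lies.
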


\begin{proof}
Let $x \in N_{S_{rp}}(Q_t)$. If $2t+1 \le j \le r$ then the conjugate $z_j^x$
of the $p$-cycle $z_j \in E_t$ is a $p$-cycle in $Q_t$.
Since $Q_t$ normalizes $E_t$, it permutes the orbits $\mathcal{O}_1$, \ldots, $\mathcal{O}_r$
of $E_t$ as blocks for its action.
 No $p$-cycle can
act non-trivially on these blocks, so 
$z_j^x \in 
\langle z_{2t+1}, \ldots, z_r \rangle$.
Hence 
if $1 \le j \le t$ then $(z_{j}z_{j+t})^x \in \langle
z_1z_{t+1}, \ldots, z_{t}z_{2t} \rangle$.
It follows that $N_{S_{rp}}(Q_t)$ factorizes as
\[ N_{S_{rp}}(Q_t) = N_{S_{2tp}}(\L) \times N_{S_{(r-2t)p}}(L^+) \]
where $L$ and $L^\+$ are as defined at the start of the second step.
Moreover, we see that $N_{S_{rp}}(Q_t)$ permutes, as blocks for
its action, the sets 
$\mathcal{O}_1 \hskip1pt\cup\hskip1pt \mathcal{O}_{t+1}, \ldots, 
\mathcal{O}_t \hskip1pt\cup\hskip1pt \mathcal{O}_{2t}$
and $\mathcal{O}_{2t+1}, \ldots, \mathcal{O}_{r}$.

Let $h \in N_{S_{rp}}(Q_t)$ be a $p$-element.
We may factorize $h$ as
$gg^\+$ where $g \in N_{S_{2tp}}(\L)$ and $g^\+ \in N_{S_{(r-2t)p}}(L^\+)$ are $p$-elements.
Since $\langle L^\+, g^\+\rangle$ is a $p$-group and
$L^\+$ is a Sylow $p$-subgroup of 
$S_{(r-2t)p}$, we have $g^\+ \in L^\+$. 
Let
\[ X = \{ \mathcal{O}_1 \cup \mathcal{O}_{t+1}, \ldots, \mathcal{O}_{t}
\cup \mathcal{O}_{2t} \}. \]
The group
$\langle L, g \rangle$ permutes the sets in $X$ as blocks for its action. Let
\[ \pi : \langle \L, g \rangle \rightarrow S_X \]
be the corresponding group homomorphism. 
By construction $L$ acts on the sets $\mathcal{O}_1, \ldots, \mathcal{O}_t$
as a Sylow $p$-subgroup of $S_{\{ \mathcal{O}_1, \ldots, \mathcal{O}_t \}}$; hence
$L \pi$ is a Sylow $p$-subgroup of $S_X$.
Since $\langle \L, g\rangle$
is a $p$-group, there exists $\tilde{g} \in L$ such that $g \pi = \tilde{g} \pi$. Let $y =
g\tilde{g}^{-1}$. Since $y$ acts trivially on $X$, we may write
\[ y = g_1 \ldots g_t \]
where $g_j \in S_{\mathcal{O}_{j} \hskip1pt\cup\hskip1pt \mathcal{O}_{j+t}}$ for each $j$.
The $p$-group $\langle L, y \rangle$ has as a subgroup $\langle z_{j}z_{j+t}, y 
\rangle$. The permutation group induced by this subgroup on $\mathcal{O}_{j}
\cup \mathcal{O}_{j+t}$, namely $\langle z_{j}z_{j+t}, g_j\rangle$,
is a $p$-group acting on a set of size $2p$. Since $p$ is odd, the unique Sylow
$p$-subgroup of $S_{\mathcal{O}_{j} \hskip1pt\cup\hskip1pt \mathcal{O}_{j+t}}$ 
containing $z_{j}z_{j+t}$
is $\langle z_{j}, z_{j+t} \rangle$. Hence $g_j \in \langle z_{j}, z_{j+t} \rangle$ for
each $j$. Therefore $y \in \langle z_1, \ldots, z_{t}, z_{t+1}, \ldots, z_{2t} \rangle \le C$.
We also know that $y\in \langle Q_t,g\rangle\leq N_{S_{2tp}}(Q_t)\leq N_{S_{rp}}(Q_t)$. Therefore $y\in D_t$, 
and 
since $\tilde{g} \in Q_t$, it follows that $g \in \langle D_t, Q_t \rangle$.
Hence $h = gg^\+ \in \langle D_t, Q_t \rangle\leq \langle C, Q_t\rangle\leq P$.

Conversely, the subgroup $\langle D_t, Q_t\rangle$ is contained in $N_{S_{rp}}(Q_t)$ because both $D_t$ and $Q_t$ are.
It follows that $\langle D_t, Q_t \rangle$ is the unique Sylow $p$-subgroup of $N_{S_{rp}}(Q_t)$.
\end{proof}

We also need the following two general lemmas.

\begin{lemma}\label{lemma:doubleBrauer}
Let $Q$ and $R$ be $p$-subgroups of a finite group $G$ and let 
$U$ be a $p$-permutation $FG$-module. Let $K = N_G(R)$.
If $R$ is normal in $Q$ then the Brauer correspondents $U(Q)$ and 
$\bigl( U(R) \bigr)(Q)$ are isomorphic as $FN_{K}(Q)$-modules.
\end{lemma}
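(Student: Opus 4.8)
Let $Q$ and $R$ be $p$-subgroups of a finite group $G$ and let $U$ be a $p$-permutation $FG$-module. Let $K = N_G(R)$. If $R$ is normal in $Q$ then the Brauer correspondents $U(Q)$ and $\bigl( U(R) \bigr)(Q)$ are isomorphic as $FN_{K}(Q)$-modules.

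The plan is to prove the isomorphism by realising both $U(Q)$ and $\bigl(U(R)\bigr)(Q)$ concretely, using a single $p$-permutation basis of $U$, and then checking that the two induced $FN_K(Q)$-actions literally coincide. Note first that since $R$ is normal in $Q$ we have $Q \le N_G(R) = K$, so $\bigl(U(R)\bigr)(Q)$ is defined, and $N_K(Q) = N_{N_G(R)}(Q)$ is a subgroup of $N_G(Q)$. First I would choose a Sylow $p$-subgroup $P'$ of $K$ containing $Q$ (possible as $Q$ is a $p$-subgroup of $K$), then a Sylow $p$-subgroup $P$ of $G$ containing $P'$; thus $R \le Q \le P' \le P$. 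Fix a $p$-permutation basis $\mathcal{B}$ of $U$ with respect to $P$.

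By Corollary~\ref{cor:Brauer}, applied in $G$, we have $U(Q) = \langle \mathcal{B}^Q \rangle$ as $FN_G(Q)$-modules. Next I would argue, as in the proof that the Brauer correspondent of a $p$-permutation module is again a $p$-permutation module, that $U(R) = \langle \mathcal{B}^R \rangle$ and that $\mathcal{B}^R$ is a $p$-permutation basis of the $FK$-module $U(R)$ with respect to $P'$, because $P' \le P$ permutes $\mathcal{B}$ and $P' \le K = N_G(R)$ normalises $R$. The key point is that, because $R$ is normal in $Q$, the group $Q$ permutes the set $\mathcal{B}^R$: if $b \in \mathcal{B}$ is fixed by $R$ and $x \in Q$ then $bx$ is fixed by $x^{-1}Rx = R$. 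Moreover the $U(R)$-action of any element of $P' \supseteq Q$ on $\mathcal{B}^R$ is the genuine permutation action, and since $R \le Q$ a vector of $\mathcal{B}^R$ is $Q$-fixed if and only if it lies in $\mathcal{B}^Q$. Hence Corollary~\ref{cor:Brauer}, applied this time in $K$ to the module $U(R)$ with its $P'$-permutation basis $\mathcal{B}^R$, gives $\bigl(U(R)\bigr)(Q) = \langle (\mathcal{B}^R)^Q \rangle = \langle \mathcal{B}^Q \rangle$. Thus $U(Q)$ and $\bigl(U(R)\bigr)(Q)$ have the same underlying space $\langle \mathcal{B}^Q \rangle$.

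It then remains to check that the two $FN_K(Q)$-module structures on $\langle \mathcal{B}^Q \rangle$ agree. Both are described in terms of projections. For $b \in \mathcal{B}^Q$ and $g \in N_K(Q)$ the vector $bg$ lies in $U^Q$ (as $b$ is $Q$-fixed and $g$ normalises $Q$), and the $U(Q)$-action sends $b$ to the component of $bg$ on $\langle \mathcal{B}^Q \rangle$ in the decomposition of $U^Q$ into $Q$-orbit sums on $\mathcal{B}$; the orbit sums of size at least $p$ span $\sum_{S < Q} \Tr_S^Q(U^S)$. For $\bigl(U(R)\bigr)(Q)$ one first replaces $bg$ by its component on $\langle \mathcal{B}^R \rangle$ relative to the decomposition of $U^R$ into $R$-orbit sums, and then takes the component of the result on $\langle \mathcal{B}^Q \rangle$ relative to the $Q$-orbit sum decomposition of $\bigl(U(R)\bigr)^Q$. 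Expanding $bg$ as an $F$-linear combination of $Q$-orbit sums on $\mathcal{B}$ and using that $\mathcal{B}^R$ is a union of $Q$-orbits (again because $R$ is normal in $Q$), one sees that the $Q$-orbits meeting $\mathcal{B}^R$ are exactly those contained in it, while every singleton $Q$-orbit automatically lies in $\mathcal{B}^R$; hence the two successive projections compose to the single projection onto $\langle \mathcal{B}^Q \rangle$, and the two actions of $g$ on $b$ coincide. The identity map on $\langle \mathcal{B}^Q \rangle$ is therefore the required isomorphism of $FN_K(Q)$-modules.

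The step I expect to be the main obstacle is exactly this last bookkeeping: one must keep careful track of the fact that the action on each Brauer quotient is the action induced on the quotient rather than the restriction of the action on $U$, transport it along the identification with the complement $\langle \mathcal{B}^Q \rangle$, and verify the two descriptions match for every $g \in N_K(Q)$ — including elements $g$ that are not $p$-elements, for which $bg$ need not be a single basis vector. The conceptual point that makes it go through, and that also reveals where the hypothesis is used, is the normality of $R$ in $Q$: it forces $Q$ to preserve $\mathcal{B}^R$, so that passing to the $R$-Brauer quotient first leaves the $Q$-orbit structure on the $R$-fixed basis vectors undisturbed.
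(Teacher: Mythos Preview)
Your proof is correct and follows essentially the same route as the paper: choose a Sylow $p$-subgroup $P'$ of $K=N_G(R)$ containing $Q$, extend to a Sylow $P$ of $G$, take a $p$-permutation basis $\mathcal{B}$ with respect to $P$, and then apply Corollary~\ref{cor:Brauer} twice to obtain $U(Q)=\langle\mathcal{B}^Q\rangle$ and $\bigl(U(R)\bigr)(Q)=\langle(\mathcal{B}^R)^Q\rangle=\langle\mathcal{B}^Q\rangle$. The paper stops there, treating the equality of $FN_K(Q)$-module structures as implicit in the module-level statement of Corollary~\ref{cor:Brauer}; your final paragraph makes this explicit by checking that the two ``act then project'' descriptions agree, using that $\mathcal{B}^R$ is $Q$-stable because $R\trianglelefteq Q$. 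That extra bookkeeping is correct and arguably more careful than the paper, but it is not a genuinely different argument.
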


\begin{proof}
Let $P$ be a Sylow $p$-subgroup of $N_G(R)$ containing $Q$
and let $\mathcal{B}$ be a $p$-permutation basis for $U$ with respect to 
a Sylow $p$-subgroup of $G$ containing~$P$.
By Corollary~\ref{cor:Brauer} we have $U(Q) = \langle \mathcal{B}^Q \rangle$ as an 
$FN_G(Q)$-module. 
In particular 
\[ U(Q)\res_{N_K(Q)}\! {}={} \langle \mathcal{B}^Q \rangle \]
as an $FN_{K}(Q)$-module.
On the other hand $U(R) = \langle \mathcal{B}^R \rangle$ as an $FN_G(R)$-module. Now $\mathcal{B}^R$ is a
$p$-permutation basis for~$U(R)$ with respect to $K \cap P$.
Since this subgroup contains $Q$ we have $\bigl( U(R) \bigr) (Q) =
\langle \mathcal{B}^R \rangle (Q) = \langle (\mathcal{B}^R)^Q \rangle
= \langle \mathcal{B}^Q \rangle$ as $FN_{K}(Q)$-modules, as required.
\end{proof}

\begin{lemma}\label{lemma:tensorBrauer}
Let $G$ and $G'$ be finite groups and let $U$ and $U'$ be $p$-permutation
modules for $FG$ and $FG'$, respectively. 
If $Q \le G$ is a $p$-subgroup then $(U \boxtimes U')(Q) = U(Q) \boxtimes U'$,
where on the left-hand side
$Q$ is regarded as a subgroup of $G \times G'$ in the obvious way.
\end{lemma}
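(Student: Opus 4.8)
The plan is to prove Lemma~\ref{lemma:tensorBrauer} by exhibiting an explicit $p$-permutation basis for $U \boxtimes U'$ and computing its $Q$-fixed points directly, exactly in the spirit of Corollary~\ref{cor:Brauer}. First I would fix a Sylow $p$-subgroup $S$ of $G$ containing $Q$, and choose a $p$-permutation basis $\mathcal{B}$ of $U$ with respect to $S$; let $\mathcal{B}'$ be any $F$-basis of $U'$. Then $\mathcal{B} \otimes \mathcal{B}' = \{ b \otimes b' : b \in \mathcal{B}, b' \in \mathcal{B}' \}$ is an $F$-basis of $U \boxtimes U'$, and since $Q \le G \times \{1\} \le G \times G'$ acts only on the first tensor factor, $Q$ permutes this basis. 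Hence $\mathcal{B} \otimes \mathcal{B}'$ is a $p$-permutation basis of $U \boxtimes U'$ with respect to $Q$ (or, enlarging $\mathcal{B}'$ harmlessly, with respect to a Sylow $p$-subgroup of $G \times G'$, though for the Brauer-correspondent computation with respect to $Q$ only the $Q$-action matters).

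Next I would apply the description of the Brauer correspondent in terms of $p$-permutation bases given in the paragraph preceding Corollary~\ref{cor:Brauer}: $(U \boxtimes U')(Q)$ is the $F$-span of those basis vectors fixed by $Q$. A basis vector $b \otimes b'$ is fixed by an element $g \in Q$ if and only if $bg \otimes b' = b \otimes b'$, which (since $\mathcal{B} \otimes \mathcal{B}'$ is a basis and the action on $b'$ is trivial) holds if and only if $bg = b$. Therefore $(\mathcal{B} \otimes \mathcal{B}')^Q = \mathcal{B}^Q \otimes \mathcal{B}'$, and so $(U \boxtimes U')(Q) = \langle \mathcal{B}^Q \otimes \mathcal{B}' \rangle = \langle \mathcal{B}^Q \rangle \otimes U' = U(Q) \boxtimes U'$ as $F$-vector spaces. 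The identification of the $FN_{G \times G'}(Q)$-module structure is then immediate: $N_{G \times G'}(Q) = N_G(Q) \times G'$, the first factor acts on $\langle \mathcal{B}^Q \rangle = U(Q)$ in the prescribed way, and the second factor acts on $U'$; since the decomposition $\langle \mathcal{B}^Q \otimes \mathcal{B}' \rangle$ respects both actions, the isomorphism $(U \boxtimes U')(Q) \cong U(Q) \boxtimes U'$ is one of $FN_{G \times G'}(Q)$-modules.

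There is essentially no obstacle here: the only point requiring a word of care is that the relative-trace submodule $\sum_{R < Q} \Tr_R^Q((U \boxtimes U')^R)$ is spanned by sums over $Q$-orbits of $\mathcal{B} \otimes \mathcal{B}'$ of size at least $p$, and such an orbit has the form $(\text{$Q$-orbit of }b) \otimes b'$ for a $Q$-orbit of $b \in \mathcal{B}$ of size at least $p$; quotienting out exactly kills the non-fixed basis vectors and leaves $\mathcal{B}^Q \otimes \mathcal{B}'$ as a basis of the quotient. This is precisely the mechanism already used to deduce Corollary~\ref{cor:Brauer}, so I would simply invoke that corollary (applied to $U \boxtimes U'$ as an $F(G \times G')$-module with the basis $\mathcal{B} \otimes \mathcal{B}'$) rather than rederiving it. The whole argument is a couple of lines once the basis $\mathcal{B} \otimes \mathcal{B}'$ and the observation $(\mathcal{B} \otimes \mathcal{B}')^Q = \mathcal{B}^Q \otimes \mathcal{B}'$ are in place.
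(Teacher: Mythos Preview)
Your proposal is correct and follows essentially the same approach as the paper: both take the tensor product of $p$-permutation bases (the paper takes a $p$-permutation basis for $U'$ as well, which you note is optional since $Q$ acts trivially on the second factor) and invoke Corollary~\ref{cor:Brauer} or the discussion preceding it. The paper's proof is a single sentence to this effect; your write-up simply unpacks the same idea in more detail.
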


\begin{proof}
This follows from Corollary~\ref{cor:Brauer} by taking
$p$-permutation bases for $U$ and $U'$ such that the $p$-permutation basis for $U$
is permuted by a Sylow $p$-subgroup of $G$ containing~$Q$.
\end{proof}

We are now ready to prove Theorem~\ref{thm:vertices}. We repeat the
statement below for the reader's convenience.

\setcounter{section}{1}
\setcounter{theorem}{1}
\begin{theorem}
\thmVertices
\end{theorem}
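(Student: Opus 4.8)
The plan is to combine the three-step analysis developed in this section: Lemma~\ref{lemma:Rcorr} and Proposition~\ref{prop:Rcorr} (first step) give the decomposition of the Brauer correspondent $H^{(2^m \; k)}(R_r)$ into tensor products, while Propositions~\ref{prop:Evertex} and~\ref{prop:Qvertex} (second step) identify the first tensor factor $H^{(2^{tp} \; (r-2t)p)}(R_r)$ as an indecomposable module with vertex $Q_t$. First I would let $U$ be an indecomposable non-projective summand of $H^{(2^m \; k)}$, and let $R$ be a vertex of $U$. Since any nontrivial $p$-subgroup of $S_{2m+k}$ is conjugate to one containing some $R_r$ with $rp \le 2m+k$, after replacing $U$ by a conjugate I may assume $R_r \le R$ for some such $r$, chosen maximal so that $R_r$ is conjugate into a vertex; equivalently, by Corollary~\ref{cor:Brauer} applied to the $p$-permutation basis of Lemma~\ref{lemma:PPermBasis}, $U(R_r) \neq 0$ and this $r$ is as large as possible. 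By the first step, $U(R_r)$ is a nonzero summand of
\[ H^{(2^m \; k)}(R_r) \cong \bigoplus_{t \in T_r} \bigl( H^{(2^{tp} \; (r-2t)p)}(R_r) \boxtimes H^{(2^{m-tp}\; k-(r-2t)p)} \bigr) \]
as $F\bigl(N_{S_{rp}}(R_r) \times S_{\{rp+1,\ldots,2m+k\}}\bigr)$-modules. Since each $H^{(2^{tp}\; (r-2t)p)}(R_r)$ is indecomposable (second step), the Krull--Schmidt theorem forces $U(R_r)$ to lie in a single summand indexed by some $t \in T_r$, and to have the form $H^{(2^{tp}\;(r-2t)p)}(R_r) \boxtimes W_0$ where $W_0$ is an indecomposable summand of $H^{(2^{m-tp}\; k-(r-2t)p)}$.

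Next I would pass from $R_r$ to the full vertex $R$ of $U$. By Theorem~\ref{thm:BrauerBij} the Brauer correspondent $U(R)$ is the Green correspondent of $U$ and is an indecomposable projective $F(N_{S_{2m+k}}(R)/R)$-module. The key point is to compute $U(R)$ by a two-stage Brauer construction: since $R_r \trianglelefteq R$ is not automatic, I would instead argue via vertices. Applying $U(-)$ to the first tensor factor: the indecomposable summand $U(R_r)$ has a vertex of $N_{S_{rp}}(R_r) \times S_{\{rp+1,\ldots,2m+k\}}$, which by Proposition~\ref{prop:Qvertex} and Lemma~\ref{lemma:tensorBrauer}-type reasoning is $Q \times Q_W$ where $Q_t$ is the vertex of the first factor (a Sylow $p$-subgroup of $(S_2 \wr S_{tp}) \times S_{(r-2t)p}$, extended appropriately inside $S_{rp}$) and $Q_W$ is a vertex of $W_0$. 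Since $U(R_r)$ is, up to Green correspondence between $N_{S_{2m+k}}(R_r)$ and $S_{2m+k}$, a summand of $U \!\downarrow$, a vertex of $U(R_r)$ is also a vertex of $U$ — so I may take $R = Q \times Q_W$ with $Q$ conjugate to $Q_t$. This identifies $R$ as a Sylow $p$-subgroup of $(S_2\wr S_{tp}) \times S_{(r-2t)p}$ sitting inside $S_{rp}$, proving the first assertion of the theorem (with the product being $(S_2\wr S_{tp}) \times S_{(r-2t)p}$ inside $S_{rp} \le S_{2m+k}$, and the residual degree $2m+k-rp$ absorbed into $W$).

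For the tensor factorization of the Green correspondent, I would use Lemma~\ref{lemma:doubleBrauer}: taking $R' = Q_t$ (so $R_r \trianglelefteq Q_t$ in the relevant sense, or more carefully working inside $N_{S_{rp}}(R_r)$ where $Q_t$ normalizes things appropriately), we get $U(Q_t) \cong \bigl(U(R_r)\bigr)(Q_t)$ as modules for the relevant normalizer. Combined with Lemma~\ref{lemma:tensorBrauer} applied to $U(R_r) \cong H^{(2^{tp}\;(r-2t)p)}(R_r) \boxtimes W_0$, this gives
\[ U(R) \cong \bigl(H^{(2^{tp}\;(r-2t)p)}(R_r)\bigr)(Q_t) \,\boxtimes\, W_0(Q_W), \]
and since $H^{(2^{tp}\;(r-2t)p)}(R_r)$ has vertex $Q_t$, its Brauer correspondent $\bigl(H^{(2^{tp}\;(r-2t)p)}(R_r)\bigr)(Q_t)$ is, by Theorem~\ref{thm:BrauerBij}, an indecomposable projective module for $F\bigl(N_{S_{rp}}(Q_t)/Q_t\bigr)$ — this is the factor $V$. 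The factor $W = W_0$ is an indecomposable summand of $H^{(2^{m-tp}\;k-(r-2t)p)}$; that it is projective follows because $U(R)$ is projective as an $F(N_{S_{2m+k}}(R)/R)$-module and a tensor product $V \boxtimes W$ over the direct product $\bigl(N_{S_{rp}}(Q_t)/Q_t\bigr) \times S_{2m+k-rp}$ is projective if and only if both factors are (using that over a direct product, projectives are exactly outer tensor products of projectives). Finally, by Lemma~\ref{lemma:Mark1} the Sylow $p$-subgroup of $N_{S_{rp}}(Q_t)$ is $\langle D_t, Q_t\rangle$, so $N_{S_{rp}}(Q_t)/Q_t$ has Sylow $p$-subgroup $D_t Q_t / Q_t \cong D_t/(D_t\cap Q_t)$; I would identify $N_{S_{rp}}(Q)/Q$ with $N_{S_{rp}}(Q_t)/Q_t$ via the conjugating element and note this is consistent with the theorem's statement. \emph{The main obstacle} I anticipate is the bookkeeping around conjugacy and normality: $R_r$ need not literally be normal in the vertex $Q_t$, so Lemma~\ref{lemma:doubleBrauer} must be applied with care — most cleanly by first conjugating everything so that $R_r \le Q_t \le P$ with $Q_t$ normalizing $C \ge R_r$, and then verifying that $R_r$ being central in a Sylow $p$-subgroup of $S_{rp}$ lets the hypothesis "$R$ normal in $Q$" of Lemma~\ref{lemma:doubleBrauer} be met (or replaced by a direct basis argument using Corollary~\ref{cor:Brauer}, which is really what Lemma~\ref{lemma:doubleBrauer} encodes). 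Making sure the $r$ and $t$ produced satisfy exactly $tp \le m$, $2t \le r$, $(r-2t)p \le k$ — i.e. $t \in T_r$ — is immediate from the first step, but tracking that the Green-correspondent degree $2m+k-rp$ decomposes as $2(m-tp) + (k-(r-2t)p)$ and matches $H^{(2^{m-tp}\;k-(r-2t)p)}$ requires one short arithmetic check.
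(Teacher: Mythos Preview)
Your overall strategy is right, but there is a genuine gap at the Krull--Schmidt step. You assert that because each $M_t = H^{(2^{tp}\;(r-2t)p)}(R_r)$ is indecomposable, Krull--Schmidt forces $U(R_r)$ to lie in a \emph{single} summand indexed by one value of $t$. This does not follow: $R_r$ is in general strictly smaller than the vertex of $U$, so $U(R_r)$ need not be indecomposable, and nothing prevents it from being a direct sum $\bigoplus_{t \in T'} M_t \boxtimes W_t$ over several values of $t$. The paper's proof confronts exactly this point: it lets $T'$ be the set of $t$ that occur, chooses $\ell = \min T'$, and then uses the crucial observation that for $t > \ell$ the vertex $Q_t$ of $M_t$ contains no conjugate of $E_\ell \le Q_\ell$, so $M_t(Q_\ell) = 0$. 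Only after passing to the further Brauer correspondent at $Q_\ell$ (via Lemmas~\ref{lemma:doubleBrauer} and~\ref{lemma:tensorBrauer}) does one isolate a single term $U(Q_\ell) \cong M_\ell(Q_\ell) \boxtimes W_\ell$. Without this minimality trick your argument does not select a well-defined $t$.

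There is a second, related gap in how you pin down the vertex. You write $R = Q \times Q_W$ with $Q_W$ a vertex of $W_0$, but the theorem asserts the vertex is just $Q = Q_t$; equivalently, $W$ is projective. You try to deduce projectivity of $W$ from projectivity of $U(R)$ as an $F(N_{S_{2m+k}}(R)/R)$-module, but this is circular: that projectivity holds only once you know $R$ is the vertex, and your candidate $R = Q_t \times Q_W$ is the vertex only if $Q_W$ is already trivial. The paper instead argues directly: having shown $U(Q_\ell) \neq 0$, so that some vertex $Q$ of $U$ contains $Q_\ell$, it supposes $Q \supsetneq Q_\ell$, picks a $p$-element $g \in N_Q(Q_\ell) \setminus Q_\ell$, factorises $g = h h^+$ across $S_{rp} \times S_{2m+k-rp}$, uses Lemma~\ref{lemma:Mark1} and Corollary~\ref{cor:Brauer} to force $h \in Q_\ell$, and then observes that the nontrivial $h^+$ yields an $R_{r+1}$ (up to conjugacy) inside the vertex, contradicting the maximality of $r$. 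This contradiction argument is the missing piece that simultaneously shows the vertex is exactly $Q_\ell$ and that $W_\ell$ is projective.

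Your remarks about normality of $R_r$ in $Q_\ell$ are fine: $R_r$ is central in the chosen Sylow $P$ and $Q_\ell \le P$, so Lemma~\ref{lemma:doubleBrauer} applies directly once you have the correct setup.
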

\setcounter{section}{4}

\begin{proof} 
Let $r\in \N$ be maximal such that the subgroup $R_r$ is contained
in a vertex of $U$.  Let $K = N_{S_{rp}}(R_r)$.
By Lemma~\ref{lemma:Rcorr} and
Proposition~\ref{prop:Rcorr} there is an isomorphism of $N_{S_{2m+k}}(R_r)$-modules
\[ H^{(2^m\; k)}(R_r) \cong \bigoplus_{t \in T_r} 
\big(H^{(2^{tp}\; (r-2t)p)}(R_r) \boxtimes H^{(2^{m-tp}\; k-(r-2t)p)}\big) \]
compatible with the factorization $N_{S_{2m+k}}(R_r) = K 
\times S_{2m+k-rp}$, where we regard $S_{2m+k-rp}$ as acting on
$\{rp+1,\ldots,2m+k\}$ and shift each module $H^{(2^{m-tp} \; k-(r-2t)p)}$ appropriately.

For $t \in T_r$, let $M_t = H^{(2^{tp}\; (r-2t)p)}(R_r)$.
By Proposition~\ref{prop:Qvertex}, each $M_t$ 
is indecomposable as an $FN_{S_{rp}}(R_r)$-module.
Hence, by the Krull--Schmidt Theorem, there is a subset $T' \subset T_r$, and
for each $t \in T'$, a non-zero summand~$W_t$ of $H^{(2^{m-tp}\; k-(r-2t)p)}$,
such that
\[ U(R_r) \cong \bigoplus_{t \in T'} M_t \boxtimes W_t
\]
as $F(K \times S_{2m+k-rp})$-modules. By Proposition~\ref{prop:Qvertex},
$M_t$ has $Q_t$ as a vertex for each non-zero $t \in T'$. 
It is clear that $M_0 = \sgn_{S_{rp}}(R_r)$ has vertex $Q_0$ as an $FN_{S_{rp}}(R_r)$-module.
Let $\ell$ be the least element of $T'$.
If $t > \ell$ then $Q_t$ does not contain a conjugate
of the subgroup $E_\ell$ of $Q_\ell$. Hence, by Theorem \ref{thm:BrauerCorr},
we have $M_t(Q_\ell) = 0$.
It now follows from Lemmas~\ref{lemma:doubleBrauer} and~\ref{lemma:tensorBrauer} 
that there is an isomorphism of $F(N_K(Q_\ell) \times S_{2m+k-rp})$-modules
\[ U(Q_\ell) \cong U(R_r)(Q_\ell) \cong M_\ell(Q_\ell) \boxtimes W_\ell. \]
Since $M_\ell$ has $Q_\ell$ as a vertex, we have $M_\ell(Q_\ell) \not=0$. It follows that
$U$ has a vertex $Q$ containing $Q_\ell$.

Let $\mathcal{B}$ be the $p$-permutation basis for $M_\ell$ 
defined in the second step. 
Since $\mathcal{B}$ is permuted by the Sylow $p$-subgroup $P$ of $K$,
it follows from Corollary~\ref{cor:Brauer} 
and Lemma~\ref{lemma:Mark1} that $\mathcal{C} = \mathcal{B}^{Q_\ell}$ is a 
$p$-permutation basis for the $FN_K(Q_\ell)$-module $M_\ell(Q_\ell)$
with respect to the Sylow $p$-subgroup
$\langle D_\ell, Q_\ell \rangle$ of $N_K(Q_\ell)$. 
Since $W_\ell$ is isomorphic to a direct summand of the $p$-permutation module
$H^{(2^{m-\ell p} \; k-(r-2\ell)p)}$ it has a $p$-permutation basis~$\mathcal{C}^\+$
with respect to a Sylow $p$-subgroup $P^\+$ of $S_{\{rp+1,\ldots, 2m+k\}}$.
Therefore 
\[ \mathcal{C} \boxtimes \mathcal{C}^\+ = \{ v \otimes v^\+ : v \in \mathcal{C},
v^\+ \in \mathcal{C}^\+ \} \]
is a $p$-permutation basis for $M_\ell(Q_\ell) \boxtimes W_\ell$ with
respect to the Sylow subgroup $\langle D_\ell, Q_\ell \rangle
\times P^+$ of $N_K(Q_\ell) \times S_{2m+k-rp}$.

Suppose, for a contradiction, that $Q$ strictly contains $Q_\ell$. 
Since~$Q$ is a $p$-group there exists a $p$-element
$g \in N_{Q}(Q_\ell)\leq N_{S_{2m+k}}(Q_\ell)$ such that
$g \not\in Q_\ell$. Now $Q_\ell$ has orbits of length at least $p$
on $\{1,\ldots, rp\}$ and fixes $\{rp+1,\ldots, 2m+k\}$. Since $g$
permutes these orbits as blocks for its action, we may factorize
$g$ as $g = hh^\+$ where $h\in N_{S_{rp}}(Q_\ell)$ and 
$h^\+ \in S_{2m+k-rp}$. 
By Lemma \ref{lemma:Mark1} we have that $\langle Q_\ell, h \rangle \le N_K(Q_\ell)$. 

Corollary~\ref{cor:Brauer} now implies that
$(\mathcal{C} \boxtimes \mathcal{C}^\+)^{\langle Q_\ell, g\rangle}
\not=\varnothing$. Let \hbox{$v \otimes v^\+ \in \mathcal{C} \boxtimes \mathcal{C}^\+$}
be such that $(v \otimes v^\+)g = v \otimes v^\+$.
Then $v \in \mathcal{B}^{\langle Q_\ell, h \rangle}$.
But $Q_\ell$ is a vertex
of $M_\ell$, 
so it follows from
Corollary~\ref{cor:Brauer} that $h \in Q_\ell$. Hence $h^\+$ is a
non-identity element of~$Q$. By taking an appropriate power
of $h^\+$ we find that~$Q$ contains a product of one or more $p$-cycles
with support contained in \hbox{$\{rp+1, \ldots, 2m+k\}$}. This contradicts
our assumption that $r$ was maximal such that $R_r$ is contained
in a vertex of $U$.

Therefore $U$ has vertex $Q_\ell$. 
We saw above that there is an isomorphism $U(Q_\ell) \cong M_\ell(Q_\ell)
\boxtimes W_\ell$ of $F(N_K(Q_\ell) \times S_{2m+k-rp})$-modules.
This identifies $U(Q_\ell)$ as a vector space on which $N_{S_{2m+k}}(Q_\ell)
=  N_{S_{rp}}(Q_\ell) \times S_{2m+k-rp}$ 
acts. It is clear from the isomorphism in Proposition~\ref{prop:Rcorr}
that $N_{S_{rp}}(Q_\ell)$ acts on the first tensor factor and $S_{2m+k-rp}$
acts on the second. Hence the action of $N_K(Q_\ell)$ on $M_\ell(Q_\ell)$
extends to an action of $N_{S_{rp}}(Q_\ell)$ on $M_\ell(Q_\ell)$
and we obtain a tensor factorization $V \boxtimes W_\ell$ of $U(Q_\ell)$
as a $N_{S_{rp}}(Q_\ell) \times S_{2m+k-rp}$-module. 
An outer tensor product of modules is projective if and only if
both factors are projective,
so by Theorem~\ref{thm:BrauerBij},
$V$ 
is a projective $FN_{S_{rp}}(Q_\ell)/Q_\ell$-module,
$W_\ell$ is a projective $FS_{2m+k-rp}$-module, and
$U(Q_\ell)$ is the Green correspondent of~$U$.
\end{proof}

\section{Proofs of  Theorem~\ref{thm:main} and Proposition~\ref{prop:projectives}}
In this section we prove Proposition~\ref{prop:projectives},
and hence Theorem~\ref{thm:main}. It will be convenient to 
assume that $H^{(2^m \; k)}$
is defined over the finite field~$\F_p$. 
Proposition~\ref{prop:projectives} then follows for an arbitrary
field of characteristic $p$ by change of scalars. We assume
the common hypotheses for these results, 
so~$\gamma$ is a $p$-core such that $2m+k = |\gamma| + w_k(\gamma)p$ and if $k \ge p$ then
\[ w_{k-p}(\gamma) \not= w_{k}(\gamma)-1.  \]
 Let $\lambda$ be
a maximal element of $\mathcal{E}_k(\gamma)$ under the 
dominance order.

Write $H_\Q^{(2^m \; k)}$ for the twisted Foulkes module
defined over the rational field. This module has an ordinary
character given by Lemma~\ref{lemma:FoulkesChars}.
In particular it has $\chi^\lambda$ as a constituent, and so the
rational Specht module~$S_\Q^\lambda$ is a direct summand of $H_\Q^{(2^m\; k)}$.
Therefore, by reduction modulo~$p$, each composition factor of $S^\lambda$
(now defined over $\F_p$) appears in $H^{(2^m \; k)}$.
In particular $H^{(2^m \; k)}$ has non-zero block component 
for the block $B(\gamma, w_k(\gamma))$ with $p$-core $\gamma$ and weight $w_k(\gamma)$.
We now use Theorem~\ref{thm:vertices} to 
show that any such block component is projective.

\begin{proposition}\label{prop:allproj}
The block component of $H^{(2^m \; k)}$ 
for the block $B(\gamma, w_k(\gamma))$ of $S_{2m+k}$
is projective.
\end{proposition}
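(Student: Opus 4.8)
The plan is to argue by contradiction, feeding the local information of Theorem~\ref{thm:vertices} into the block combinatorics of Theorem~\ref{thm:BrauerBlocks}. Write $w = w_k(\gamma)$ and suppose the $B(\gamma,w)$-component of $H^{(2^m\;k)}$ is not projective. We may assume $m \ge 1$: for $m = 0$ the module is $\sgn_{S_k} = S^{(1^k)}$, which is projective when $k < p$, while for $k \ge p$ the only $\gamma$ giving a non-zero $B(\gamma,w)$-component is the $p$-core of $(1^k)$, and for that $\gamma$ one checks $w_{k-p}(\gamma) = w_k(\gamma)-1$, so the standing hypothesis rules this out. By Krull--Schmidt the block component has an indecomposable non-projective summand $U$; being a summand of the $p$-permutation module $H^{(2^m\;k)}$, $U$ is a $p$-permutation module, and it lies in $B(\gamma,w)$. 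By Theorem~\ref{thm:vertices} there are $t \in \N_0$ and $r \in \N$ with $tp \le m$, $2t \le r$ and $(r-2t)p \le k$ such that $U$ has vertex $Q$, a Sylow $p$-subgroup of $(S_2 \wr S_{tp}) \times S_{(r-2t)p}$ which up to conjugacy acts on $\{1,\ldots,rp\}$, and the Green correspondent $U(Q)$ factors as $V \boxtimes W$ over $F\bigl((N_{S_{rp}}(Q)/Q) \times S_{2m+k-rp}\bigr)$ with $V$, $W$ projective and $W$ a non-zero indecomposable summand of $H^{(2^{m-tp}\;k-(r-2t)p)}$ as an $FS_{2m+k-rp}$-module.

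The first substantive step is to locate $W$ in a specific block. Since $r \ge 1$ we have $\supp(Q) = \{1,\ldots,rp\}$, so Theorem~\ref{thm:BrauerBlocks}, applied with the subgroup $R$ taken to be the vertex $Q$ itself, gives $N_{S_{2m+k}}(Q) = N_{S_{rp}}(Q) \times S_{2m+k-rp}$, a unique block $b$ of $N_{S_{rp}}(Q)$, and the conclusion that $U(Q) \cong V \boxtimes W$ lies in $b \otimes B(\gamma, w-r)$. As the blocks of an outer tensor product are determined factorwise, $W$ lies in the block $B(\gamma, w-r)$ of $S_{2m+k-rp}$; in particular $w \ge r$ and $2m+k-rp = |\gamma| + (w-r)p$. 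Now run the opening argument of this section with the implication reversed: since $H^{(2^{m-tp}\;k-(r-2t)p)}$ is defined over $\Z$ and the block idempotents lift, a non-zero $B(\gamma,w-r)$-component over $\F_p$ forces a non-zero $B(\gamma,w-r)$-component in characteristic zero, so by Lemma~\ref{lemma:FoulkesChars} there is a partition of $2m+k-rp$ with $p$-core $\gamma$ and exactly $k-(r-2t)p$ odd parts. Such a partition is obtained from $\gamma$ by adding $w-r$ disjoint $p$-hooks, so by the definition of $w_{k-(r-2t)p}(\gamma)$,
\[ w_{k-(r-2t)p}(\gamma) \;\le\; w - r \;=\; w_k(\gamma) - r. \]

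It remains to contradict this. From the inequality $w_{\kappa+p}(\gamma) \le w_\kappa(\gamma) + 1$ recalled just after the statement of Theorem~\ref{thm:main}, iteration gives $w_{k-jp}(\gamma) \ge w_k(\gamma) - j$ whenever $k - jp \ge 0$. Taking $j = r-2t$ (which is $\ge 0$ since $2t \le r$) and combining with the displayed inequality yields $w_k(\gamma) - (r-2t) \le w_k(\gamma) - r$, so $2t \le 0$ and hence $t = 0$. Thus $r \ge 1$ and $k \ge rp \ge p$, so the standing hypothesis $w_{k-p}(\gamma) \ne w_k(\gamma)-1$ applies; together with $w_{k-p}(\gamma) \ge w_k(\gamma)-1$ it gives $w_{k-p}(\gamma) \ge w_k(\gamma)$. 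Iterating $w_{\kappa+p}(\gamma) \le w_\kappa(\gamma)+1$ once more, from $k-p$ down to $k-rp$, we obtain
\[ w_{k-rp}(\gamma) \;\ge\; w_{k-p}(\gamma) - (r-1) \;\ge\; w_k(\gamma) - (r-1) \;>\; w_k(\gamma) - r, \]
which contradicts the displayed inequality with $t=0$. Hence no such $U$ exists and the $B(\gamma,w_k(\gamma))$-component of $H^{(2^m\;k)}$ is projective.

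The step I expect to cost the most care is the block bookkeeping of the second paragraph: extracting from the tensor factorization in Theorem~\ref{thm:vertices} that the $S_{2m+k-rp}$-factor $W$ of the Green correspondent is not merely a summand of the smaller twisted Foulkes module $H^{(2^{m-tp}\;k-(r-2t)p)}$ but a summand \emph{lying in $B(\gamma,w_k(\gamma)-r)$}, and then passing from this $\F_p$-level statement back to the ordinary character so that Lemma~\ref{lemma:FoulkesChars} yields the bound on $w_{k-(r-2t)p}(\gamma)$. Once that bound is in hand the final contradiction is a short manipulation of the functions $w_\bullet(\gamma)$.
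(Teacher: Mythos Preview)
Your argument is correct, and it takes a genuinely different route from the paper. The paper does not apply Theorem~\ref{thm:vertices} at the level of the full vertex $Q$; instead it passes to the Brauer correspondent with respect to the much smaller cyclic subgroups $R_1$ or $R_2$ contained in $Q$, invoking Lemma~\ref{lemma:Rcorr} and Proposition~\ref{prop:Rcorr} directly. This forces a case split according to whether $2t<r$ (so $Q$ contains a $p$-cycle and one can use $R_1$) or $2t=r$ (so one must use $R_2$, and a further subcase appears). In each branch one lands in a smaller twisted Foulkes module of weight $w_k(\gamma)-1$ or $w_k(\gamma)-2$ and derives a contradiction to minimality or to the hypothesis on $w_{k-p}(\gamma)$ in one step.

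Your version is more uniform: you use the tensor factorization of the Green correspondent already packaged in Theorem~\ref{thm:vertices}, apply Theorem~\ref{thm:BrauerBlocks} once with $R=Q$, and convert the block information into the single inequality $w_{k-(r-2t)p}(\gamma)\le w_k(\gamma)-r$. The subsequent manipulation with the elementary bound $w_{\kappa+p}(\gamma)\le w_\kappa(\gamma)+1$ then replaces the paper's case analysis by a short chain of inequalities, first forcing $t=0$ and then contradicting the standing hypothesis. You also make explicit the $m=0$ case (needed because Theorem~\ref{thm:vertices} assumes $m\in\N$), which the paper leaves implicit. What you gain is a cleaner, case-free argument that exploits the full strength of Theorem~\ref{thm:vertices}; what the paper's approach buys is a slightly more hands-on derivation that stays closer to the explicit description of $H^{(2^m\;k)}(R_r)$ and uses less of the machinery of the Green correspondent.
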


\begin{proof}
Suppose, for a contradiction,
that $H^{(2^m \; k)}$ has a non-projective summand $U$ in
$B(\gamma, w_k(\gamma))$. By Theorem~\ref{thm:vertices},
the vertex of $U$ is a Sylow subgroup~$Q_t$ of \hbox{$(S_2 \wr S_{tp})
\times S_{(r-2t)p}$} for some $r \in \N$ and $t \in \N_0$ such that $tp \le m$, $2t \le r$
and $(r-2t)p \le k$.

Suppose first of all that $2t < r$. In this case 
there is a $p$-cycle $g \in Q_t$. Replacing $Q_t$ with a conjugate,
we may assume that $g = (1,\ldots, p)$ and so $\langle g \rangle = R_1$
where $R_1$ is as defined at the start of the first step in \S 4.
By Lemma~\ref{lemma:Rcorr} and Proposition~\ref{prop:Rcorr}, we have that $k \ge p$ and $U(R_1)$
is a direct summand of
\[ H^{(2^m \; k)}(R_1) =  \sgn_{S_p}(\langle g \rangle) \boxtimes H^{(2^m \; k - p)}. \]
Hence there exists an indecomposable summand $W$ of $H^{(2^m \; k-p)}$
such that 
\[ \sgn_{S_p}(\langle g \rangle) \boxtimes W \mid  U(R_1) .\] 
By Theorem~\ref{thm:BrauerBlocks}, $W$ lies in the
block $B(\gamma, w_k(\gamma) - 1)$ of $S_{2m+k-p}$. 
In particular, this implies that $H^{(2^m\; k-p)}$ has a composition factor in this block. 
Therefore there is a constituent $\chi^\mu$ of the ordinary character 
of $H^{(2^m \; k-p)}$ such that $S^\mu$ 
lies in $B(\gamma,w_k(\gamma)-1)$. 
But then, by Lemma~\ref{lemma:FoulkesChars},~$\mu$ 
is a partition with $p$-core $\gamma$ having exactly $k-p$
odd parts and weight $w_k(\gamma) - 1$. Adding a single vertical $p$-hook
to $\mu$ gives a partition of weight $w_k(\gamma)$ with exactly $k$ odd parts.
Hence $w_{k-p}(\gamma) = w_k(\gamma) - 1$, contrary to the hypothesis on $w_{k-p}(\gamma)$.

Now suppose that $2t = r$. Let $g = (1, \ldots, p)(p+1, \ldots, 2p)$. Then
$g \in Q_t$ by definition and $\langle g \rangle = R_2$. By
Lemma~\ref{lemma:Rcorr} and Proposition~\ref{prop:Rcorr} we have that $U(R_2)$ 
is a direct summand of
\[ H^{(2^m \; k)}(R_2) = \Bigl( H^{(2^p)}(\langle g \rangle) \boxtimes H^{(2^{m-p} \; k)} 
\Bigr) \bigoplus
\Bigl( \sgn_{S_{2p}}(\langle g \rangle) \boxtimes H^{(2^m \; k-2p)} \Bigr) \]
where the second summand should be disregarded if $k < 2p$.
It follows that either there is an $FS_{2m+k-2p}$-module $V$ such that
\[ H^{(2^p)}(\langle g \rangle) \boxtimes V \mid  U(R_2),\] 
or $k \ge 2p$ and there is an $FS_{2m+k-2p}$-module $W$ such that 
\[ \sgn_{S_{2k}}(\langle g \rangle) 
\boxtimes W \mid  U(R_2). \] 

Again we use Theorem~\ref{thm:BrauerBlocks}.
In the first case the theorem implies that
$V$ lies in the block \hbox{$B(\gamma, w_k(\gamma)-2)$} of $S_{2m+k-2p}$.
Hence there is a constituent $\chi^\mu$ of the ordinary character of $H^{(2^{m-p} \; k)}$
such that $\mu$ is a partition with $p$-core $\gamma$ and weight $w_k(\gamma) - 2$ 
having exactly $k$ odd parts.
This contradicts the minimality of $w_k(\gamma)$. In the second case
$W$ also lies in the block $B(\gamma, w_k(\gamma) - 2)$ of $S_{2m+k-2p}$
and there is a constituent $\chi^\mu$ of the ordinary character of $H^{(2^m \; k-2p)}$
such that $\mu$ is a partition with $p$-core~$\gamma$ and weight $w_k(\gamma) - 2$
having exactly $k-2p$ odd parts. But then by adding a single vertical $p$-hook
to $\mu$ we reach a partition with weight $w_k(\gamma) - 1$ having exactly
$k-p$ odd parts. Once again this
contradicts the hypothesis that $w_{k-p}(\gamma)\not= w_k(\gamma)-1$.
\end{proof}

For $\nu$ a $p$-regular partition,
let $P^\nu$ denote the projective cover of the simple module $D^\nu$.
To finish the proof of Proposition~\ref{prop:projectives} 
we must show
that if $\lambda$ is a maximal element of $\mathcal{E}_k(\gamma)$ then $P^\lambda$
is one of the projective summands of $H^{(2^m \; k)}$ in the block $B(\gamma, w_k(\gamma))$.
For this we need a lifting result for summands of the monomial
module $H^{(2^m \; k)}$, which we prove using the analogous, and 
well known, result 
for permutation modules.
Let $\Z_p$ denote the ring
of $p$-adic integers and let $H^{(2^m \; k)}_{\Z_p}$ denote the twisted
Foulkes module defined over $\Z_p$.

\begin{lemma}\label{lemma:lifting}
If $U$ is a 
direct summand of $H^{(2^m\; k)}$ 
then there is a 
$\Z_p S_{2n+k}$-module
$U_{\Z_p}$, unique up to isomorphism,
such that $U_{\Z_p}$ is a direct summand of $H^{(2^m \; k)}_{\Z_p}$
and $U_{\Z_p} \otimes_{\Z_p} \F_p \cong U$.
\end{lemma}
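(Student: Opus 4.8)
The plan is to reduce to the well-known analogous statement for genuine permutation modules: if $G$ is a finite group and $Y$ is a permutation $\F_p G$-module with associated permutation $\Z_p G$-lattice $\tilde Y$ (spanned by the same $G$-set), then the reduction map $\End_{\Z_p G}(\tilde Y)\to\End_{\F_p G}(Y)$ is surjective with kernel $p\End_{\Z_p G}(\tilde Y)$, and consequently each direct summand of $Y$ lifts, uniquely up to isomorphism, to a direct summand of $\tilde Y$. Surjectivity is transparent, since both endomorphism rings are free (over $\Z_p$, respectively $\F_p$) on the $G$-orbit sums on $\mathcal{B}\times\mathcal{B}$, where $\mathcal{B}$ is the permutation basis, and reduction mod $p$ carries the one basis bijectively onto the other; the statement about summands then follows by lifting idempotents along this surjection, whose source $\End_{\Z_p G}(\tilde Y)$ is $p$-adically complete, being finitely generated and free over $\Z_p$.

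Set $G=S_{2m+k}$. The key point is to realize the prescribed lattice $H^{(2^m\;k)}_{\Z_p}$ as a genuine direct summand of a permutation $\Z_p G$-lattice. Running the computation given just before Lemma~\ref{lemma:PPermBasis} over $\Z_p$ instead of over $F$ --- which is legitimate because $p$ is odd, so the induced module $\Z_p\ind_{A_k}^{S_k}$ is still the direct sum of the trivial and the sign $\Z_p S_k$-modules --- one sees that $H^{(2^m\;k)}_{\Z_p}$ is a direct summand of the permutation $\Z_p G$-lattice $\tilde Y = \Z_p\ind_{(S_2\wr S_m)\times A_k}^{G}$, and that $\tilde Y\otimes_{\Z_p}\F_p\cong Y$, the corresponding permutation $\F_p G$-module, with $H^{(2^m\;k)}$ the corresponding summand of $Y$. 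Let $\tilde f\in\End_{\Z_p G}(\tilde Y)$ be the idempotent with image $H^{(2^m\;k)}_{\Z_p}$, so its reduction $\overline{f}$ has image $H^{(2^m\;k)}$. Passing to corner rings, $\End_{\Z_p G}(H^{(2^m\;k)}_{\Z_p})=\tilde f\End_{\Z_p G}(\tilde Y)\tilde f$ and $\End_{\F_p G}(H^{(2^m\;k)})=\overline{f}\End_{\F_p G}(Y)\overline{f}$, and surjectivity of $\End_{\Z_p G}(\tilde Y)\to\End_{\F_p G}(Y)$ forces the reduction map
\[ \rho\colon \End_{\Z_p G}\bigl(H^{(2^m\;k)}_{\Z_p}\bigr)\longrightarrow \End_{\F_p G}\bigl(H^{(2^m\;k)}\bigr) \]
to be surjective as well; its kernel is $p\End_{\Z_p G}(H^{(2^m\;k)}_{\Z_p})$ since $H^{(2^m\;k)}_{\Z_p}$ is $\Z_p$-free.

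It remains to run the standard idempotent-lifting argument on $A:=\End_{\Z_p G}(H^{(2^m\;k)}_{\Z_p})$. This ring is finitely generated and free over $\Z_p$, hence $p$-adically complete, so $pA\subseteq J(A)$, idempotents of $A/pA\cong\End_{\F_p G}(H^{(2^m\;k)})$ lift to idempotents of $A$, units lift to units, and two idempotents of $A$ with conjugate images in $A/pA$ are conjugate in $A$. Given a direct summand $U$ of $H^{(2^m\;k)}$, write $U\cong eH^{(2^m\;k)}$ for an idempotent $e\in\End_{\F_p G}(H^{(2^m\;k)})$, lift $e$ to an idempotent $\tilde e\in A$ along $\rho$, and put $U_{\Z_p}=\tilde e\,H^{(2^m\;k)}_{\Z_p}$; then $U_{\Z_p}$ is a direct summand of $H^{(2^m\;k)}_{\Z_p}$ and $U_{\Z_p}\otimes_{\Z_p}\F_p\cong \overline{\tilde e}\,H^{(2^m\;k)}=eH^{(2^m\;k)}\cong U$. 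For uniqueness, any direct summand of $H^{(2^m\;k)}_{\Z_p}$ reducing to $U$ has the form $fH^{(2^m\;k)}_{\Z_p}$ for an idempotent $f\in A$ with $\overline{f}$ conjugate to $e$ in $\End_{\F_p G}(H^{(2^m\;k)})$; lifting the conjugating unit and using that idempotents of $A$ with equal image in $A/pA$ are conjugate in $A$ shows that $f$ is conjugate to $\tilde e$ in $A$, whence $fH^{(2^m\;k)}_{\Z_p}\cong U_{\Z_p}$. The argument is essentially routine; the only point that needs care is the second paragraph, namely arranging that the prescribed lattice $H^{(2^m\;k)}_{\Z_p}$ --- rather than merely some abstract trivial-source lift of $H^{(2^m\;k)}$ --- plays the role of the permutation lattice in the lifting machinery.
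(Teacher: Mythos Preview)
Your proof is correct and follows essentially the same strategy as the paper: embed $H^{(2^m\;k)}_{\Z_p}$ as a direct summand of the permutation lattice $\Z_p\!\ind_{(S_2\wr S_m)\times A_k}^{S_{2m+k}}$ (using that $p$ is odd) and then invoke the lifting theorem for permutation modules. The only difference is presentational: the paper cites Scott's lifting theorem as a black box and passes from the permutation module to $H^{(2^m\;k)}$ by noting that the complementary summand $M'_{\Z_p}$ is itself a permutation module (namely the Young permutation module $\Z_p\!\ind_{(S_2\wr S_m)\times S_k}^{S_{2m+k}}$), so Scott applies to both $M$ and $M'$ separately; you instead pass to the corner ring $\tilde f\,\End_{\Z_p G}(\tilde Y)\,\tilde f$ and carry out the idempotent-lifting argument explicitly. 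Your route is slightly more self-contained and avoids the observation about $M'$, but the underlying mechanism is identical.
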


\begin{proof}
Let $A_k$ denote the alternating group on $\{2m+1,\ldots, 2m+k\}$.
Let $M = \F_p \ind_{S_2 \wr S_m \times A_k}^{S_{2m+k}}$ be the permutation
module of $S_{2m+k}$ acting on the cosets of $S_2 \wr S_m \times A_k$
and let $M_{\Z_p} = \Z_p\!\ind_{S_2 \wr S_m \times A_k}^{S_{2m+k}}$ be the
corresponding permutation module defined over $\Z_p$.
Since $p$ is odd, the trivial \hbox{$\Z_p(S_2 \wr S_m \times S_k)$} module is a direct
summand of $\Z_p\!\ind_{S_2 \wr S_m \times A_k}^{S_2 \wr S_m \times S_k}$. 
Hence,
inducing up to $S_{2m +k}$ (as in the remark
after Lemma~\ref{lemma:DeltaKIso}), we see that
$M_{\Z_p} = H^{(2^m \; k)}_{\Z_p} \oplus M'_{\Z_p}$ where $M'_{\Z_p}$ is a complementary
$\Z_pS_{2m+k}$-module, and~$M = H^{(2^m \; k)} \oplus M'$ where $M'$ is the
reduction modulo $p$ of~$M'_{\Z_p}$.

By Scott's lifting theorem (see \cite[Theorem 3.11.3]{Benson}), 
reduction modulo $p$ is a bijection between the
summands of $M_{\Z_p}$ and the summands of $M$. By the same result, this
bijection restricts to a bijection between the summands of the permutation
module~$M'_{\Z_p}$ and the summands of $M'$. Since $U$ is a direct~summand of $M$
there is a 
summand~$U_{\Z_p}$ of $M_{\Z_p}$, unique up to isomorphism, such
that \hbox{$U_{\Z_p} \otimes_{\Z_p} \F_p \cong U$}. By the remarks
just made, $U_{\Z_p}$ is isomorphic to a summand of $H^{(2^m \; k)}_{\Z_p}$.
\end{proof}

Let $P^\nu_{\Z_p}$ be the $\Z_p$-free
$\Z_pS_{2m+k}$-module whose reduction modulo $p$ is
$P^\nu$. By Brauer reciprocity (see for instance 
\cite[\S 15.4]{Serre}), the ordinary character of~$P^\nu_{\Z_p}$~is 
\[ \tag{$\star$} \psi^\nu = \sum_\mu d_{\mu \nu} \chi^\mu. \]
The result mentioned in the introduction, that 
if $d_{\mu \nu} \not=0$ then $\nu$ dominates~$\mu$, implies that
the sum may be taken over those partitions $\mu$ dominated by~$\nu$.

\begin{proof}[Proof of Proposition~\ref{prop:projectives}]
We have  seen that each summand of $H^{(2^m \; k)}$ in the block
$B(\gamma, w_k(\gamma))$ is projective and that there is at least one such summand.
Let $P^{\nu_1}, \ldots, P^{\nu_c}$ be the summands of $H^{(2^m \; k)}$
in $B(\gamma, w_k(\gamma))$. Using Lemma~\ref{lemma:lifting} to lift
these summands to summands of~$H^{(2^m \; k)}_{\Z_p}$ we see that
the ordinary character of the summand of $H^{(2^m \; k)}_{\Z_p}$
lying in the $p$-block of $S_{2m+k}$ with core $\gamma$ and weight $w_k(\gamma)$
is 
$\psi^{\nu_1} + \cdots + \psi^{\nu_c}$. 
By Lemma~\ref{lemma:FoulkesChars} we have
\[ \tag{$\dagger$}\psi^{\nu_1} + \cdots + \psi^{\nu_c} = \sum_{\mu \in \mathcal{E}_k(\gamma)}
\chi^\mu. \]
By hypothesis $\lambda$ is a maximal partition in the dominance order
on $\mathcal{E}_k(\gamma)$, 
and by~($\star$) each $\psi^{\nu_j}$ is a sum of ordinary irreducible
characters $\chi^\mu$ for partitions $\mu$ dominated by $\nu_j$. 
Therefore one of the partitions $\nu_j$ must equal $\lambda$, 
as required.
\end{proof}

We are now ready to prove Theorem~\ref{thm:main}

\begin{proof}[Proof of Theorem~\ref{thm:main}]
Suppose that the  projective summands of $H^{(2^m \; k)}$
lying in the block $B(\gamma, w_k(\gamma))$
are $P^{\nu_1}, \ldots, P^{\nu_c}$.
Then by ($\dagger$) above,
$\mathcal{E}_k(\gamma)$ has a partition into disjoint subsets 
$\mathcal{X}_1, \ldots, \mathcal{X}_c$ 
such that $\nu_j \in \mathcal{X}_j$ and
\[ \psi^{\nu_j} = \sum_{\mu \in \mathcal{X}_j} \chi^\mu \]
for each $j$.
It now follows from ($\star$)
that 
the column of the decomposition matrix of $S_n$ in characteristic~$p$ labelled
by $\nu_j$ has $1$s in the rows labelled by partitions in~$\mathcal{X}_j$, and
$0$s in all other rows.
\end{proof}

\section{Applications of Theorem~\ref{thm:main} and Proposition~\ref{prop:projectives}}

We begin with a precise statement of the result on diagonal Cartan numbers mentioned
in the introduction
after Proposition~\ref{prop:projectives}. 

\begin{theorem}[{\cite[Theorem~2.8]{Richards}} or {\cite[Proposition~4.6(i)]{BessenrodtUno}}]
\label{thm:bound}
If $\nu$ is a $p$-regular partition of~$n$ such that $\nu$
has weight $w$ then $d_{\mu \nu} \not= 0$ for at least $w+1$
distinct partitions $\mu$.
\end{theorem}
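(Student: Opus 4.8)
The plan is to prove the equivalent-looking but slightly stronger statement that the support of the column of the decomposition matrix indexed by $\nu$ contains a chain
\[ \nu = \mu^{(0)} \rhd \mu^{(1)} \rhd \cdots \rhd \mu^{(w)} \]
of length $w+1$ in the dominance order. Since a strictly decreasing chain necessarily consists of pairwise distinct partitions, such a chain immediately yields $w+1$ partitions $\mu$ with $d_{\mu\nu}\neq 0$. The endpoint $\mu^{(0)}=\nu$ is available for free, because $\nu$ is $p$-regular and hence $d_{\nu\nu}=1$ by James's theorem \cite{JamesIrrepsBPLMS}; so the whole content lies in producing the remaining $w$ links.

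I would build the chain by induction on the weight $w$, the case $w=0$ being vacuous. Display $\nu$ on an abacus with $p$ runners (and a multiple of $p$ beads), so that the configuration recording the $p$-core $\gamma$ is obtained from that of $\nu$ by $w$ single-step upward bead moves. The inductive step rests on two ingredients. First, a combinatorial choice: pick a bead of the display of $\nu$ that may be slid up, in such a way that the resulting partial desingularisation can be completed (by sliding a bead on a lower position down one step) to a partition $\nu^\flat$ of the \emph{same} $n$, lying in the \emph{same} block $B(\gamma,w)$, strictly below $\nu$ in dominance and $p$-regular (after regularising, if need be), while the partition obtained by forgetting the moved $p$-hook lies in a block of genuinely smaller weight. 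Second, a transport principle: one shows that non-vanishing of the relevant decomposition number survives the move, using the Jantzen--Schaper formula together with the first-row and first-column removal theorems for decomposition numbers (\cite[Chapter~6, Section~4]{Mathas}) and James's $p$-regularisation theorem \cite{JamesDecII}, the latter supplying a partition $\mu$ with $d_{\mu\nu}=1$ lying strictly below $\nu$. Applying the inductive hypothesis inside the smaller-weight block and splicing the resulting length-$w$ chain onto $\nu$ then produces the desired length-$(w+1)$ chain.

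The main obstacle is precisely the interface between these two ingredients: the abacus move must be chosen so as to \emph{simultaneously} decrease weight in the auxiliary block, decrease dominance at the ambient level, preserve $p$-regularity, and preserve non-vanishing of the decomposition number, and these requirements pull in different directions, so exhibiting a single prescription that works for \emph{every} $p$-regular $\nu$ of a given weight is delicate. This case analysis is the technical heart of the proofs of Richards \cite[Theorem~2.8]{Richards} and of Bessenrodt and Uno \cite[Proposition~4.6(i)]{BessenrodtUno}. An alternative route, which bypasses Jantzen--Schaper, runs through the Hecke algebra and the Fock-space (canonical basis) description of decomposition numbers, where the required chain can be read off directly from a crystal/ladder computation; either way, once the chain is in place the argument is finished.
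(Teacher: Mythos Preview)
The paper does not give its own proof of this statement: Theorem~\ref{thm:bound} is simply quoted from \cite[Theorem~2.8]{Richards} and \cite[Proposition~4.6(i)]{BessenrodtUno} and used as a black box in \S6. So there is no in-paper argument to compare your proposal against.

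As for the proposal itself, it is a high-level sketch rather than a proof, and you acknowledge as much when you say that the decisive case analysis ``is the technical heart of the proofs of Richards \ldots\ and of Bessenrodt and Uno''. That is honest, but it means the proposal does not stand on its own. More substantively, the inductive mechanism you describe has a gap that you have not addressed. You pass from $\nu$ to a partition of weight $w-1$ by removing a $p$-hook, apply the inductive hypothesis there to obtain a chain of length $w$, and then ``splice'' it back. But the inductive hypothesis produces partitions $\tilde{\mu}$ with $d_{\tilde{\mu}\,\tilde{\nu}}\neq 0$ in $B(\gamma,w-1)$, and you need partitions $\mu$ with $d_{\mu\nu}\neq 0$ in $B(\gamma,w)$. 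The tools you invoke for the transport---row and column removal, $p$-regularisation---do not in general convert $p$-hook removal into a statement about decomposition numbers in the larger block; row/column removal compares decomposition numbers between $S_n$ and $S_{n-a}$ by stripping a full first row or column, which is not the same operation as removing a rim $p$-hook. Jantzen--Schaper can certainly be made to do the work, but not via the bookkeeping you describe: one has to argue directly with the Jantzen sum formula in the fixed block $B(\gamma,w)$, not by descending to $B(\gamma,w-1)$. As written, the splice step is where the argument would fail.

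If you want a self-contained route, the cleanest is probably the one you allude to at the end: in the Fock-space model the column of the decomposition matrix is the transition vector of a canonical basis element, and the required $w+1$ nonzero entries can be read off from the ladder/crystal combinatorics without any induction on weight. But that, too, needs to be written out rather than gestured at.
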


If
\hbox{$|\mathcal{E}_k(\gamma)| \le 2w_k(\gamma) + 1$} then it
follows from Theorems~\ref{thm:main} and~\ref{thm:bound} that
$\mathcal{E}_k(\gamma)$ has a unique maximal partition, say $\lambda$,
and the only non-zero entries of the column of the decomposition matrix
of $S_n$ labelled by $\lambda$ 
are $1$s in rows labelled by partitions in $\mathcal{E}_k(\gamma)$.
In these cases Theorem~\ref{thm:main} becomes a sharp result.

\begin{example}\label{ex:ex}
Firstly let $p=3$ and let $\gamma = (3,1,1)$.
We leave it to the reader to check that $w_0(\gamma) = 3$ and 
\[ \mathcal{E}_0(\gamma) = \{(8,4,2), (6,6,2), (6,4,4),
(6,4,2,2)\}.\] 
Hence the column of the decomposition matrix of $S_{12}$ in characteristic
$3$ labelled by $(8,4,2)$ has $1$s in the rows labelled by the four
partitions in $\mathcal{E}_0(\gamma)$ and no other non-zero entries. (The 
full decomposition matrix of the block $B( (3,1,1),3)$ was found by Fayers
in \cite[A.8]{Fayers13}.)

Secondly let $p=7$ and let $\gamma = (4,4,4)$. Then $w_6(\gamma) = 2$ and
$\mathcal{E}_6(\gamma) = \mathcal{X} \cup \mathcal{X}'$ where
\begin{align*}
\mathcal{X}  &= \{ (11,4,4,3,1^4), (11,4,4,2,1^5), 
                   (10,5,4,3,1^4), (10,5,4,2,1^5) \}, \\
\mathcal{X}' &= \{  (9,5,5,5,1,1), (9,5,5,4,1,1,1), (8,5,5,5,1,1,1) \}.
\end{align*}
The partitions in $\mathcal{X}$ and
$\mathcal{X}'$ are mutually incomparable under the dominance order.
Thus Theorem~\ref{thm:main}
determines the columns of the decomposition matrix of $S_{26}$ in characteristic $7$
labelled by $(11,4,4,3,1^4)$ and $(9,5,5,5,1,1)$.

Finally let $p=5$ and let $\gamma = (5,4,2,1^4)$. Then $w_6(\gamma) =
3$, and 
\[ \mathcal{E}_6(\gamma) = \left\{ 
\ttfrac{(15,9,2,1^4),(15,6,5,1^4), 
(13,11,2,1^4)}{(13,6,5,3,1^3),(10,9,7,1^4),(10,9,5,3,1^3)} \right\}. 
\]
It is easily seen that $w_1(\gamma) > 2$. (In fact $w_1(\gamma) = 8$.)
Therefore Theorem~\ref{thm:main} determines the column of the decomposition
matrix of $S_{30}$ in characteristic~$5$ labelled by $(15,9,2,1^4)$.
\end{example}

We now use the following
combinatorial lemma to 
prove that the bound in Theorem~\ref{thm:bound} is
attained in blocks of every weight.
Note that
when $p=3$ and $e=2$ the core used is $(3,1,1)$, as in the first example above.
For an introduction to James' abacus see \cite[page 78]{JK}.

\renewcommand{\d}{e}

\begin{lemma}\label{lemma:arbweight}
Let $p$ be an odd number, let $\d \in \N_0$, and let $\gamma$ be 
the $p$-core represented by the $p$-abacus with two beads on runner $1$, $e+1$
beads on runner~$p-1$, and one bead on every other runner. 
If $0 \le k \le \d+1$ then $w_k(\gamma) = \d+1-k$ and $|\mathcal{E}_k(\gamma)| = w_k(\gamma) + 1$.
\end{lemma}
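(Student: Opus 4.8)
The plan is to work directly on the $p$-abacus describing $\gamma$ and to track the effect of adding vertical and horizontal $p$-hooks, i.e.\ of moving beads up one runner or across to the adjacent runner on the same row. Fix the abacus with exactly $p$ runners, labelled $0,1,\ldots,p-1$, in which runner $1$ carries two beads (in the first two positions), runner $p-1$ carries $e+1$ beads (in the first $e+1$ positions), and every other runner carries a single bead in position~$0$. First I would record the elementary dictionary: adding a single $p$-hook to a partition corresponds to sliding one bead down one step on its runner (into the next empty slot), the hook is \emph{vertical} (a column hook, which changes the parity of exactly one part appropriately) precisely when\ldots — here I would fix the convention from \cite[p.~78]{JK} and note that moving a bead down on a runner other than runner~$0$ keeps the relevant part even or odd in a controlled way, whereas the parity of the parts of $\gamma$ itself must be computed once from the bead configuration. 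The key preliminary computation is the parity count: from the given configuration one checks that $\gamma$ has a definite number of odd parts, and I expect (consistent with the $p=3$, $e=2$ case $\gamma=(3,1,1)$, which has two odd parts $1,1$ and a cross-check that the statement wants $w_0(\gamma)=e+1=3$) that the number of odd parts of $\gamma$ together with the structure on runners $1$ and $p-1$ forces the bound.

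Next I would prove the upper bound $w_k(\gamma)\le e+1-k$ by exhibiting an explicit sequence of $e+1-k$ hook additions that produces a partition with exactly $k$ odd parts. The natural move is to slide the surplus bead on runner $p-1$ (there are $e+1$ of them, versus one on every other runner) down the runner, or to push beads from runner $p-1$ to runner $0$; each such push across the end of the abacus wraps around and changes a part's parity, and by choosing how many of the $e+1$ ``extra'' beads we move and how we distribute them I can realise any target number $k$ of odd parts with exactly $e+1-k$ hooks, provided $0\le k\le e+1$. This both gives the upper bound and, by exhibiting the partitions reached, begins the enumeration of $\mathcal{E}_k(\gamma)$. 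For the matching lower bound $w_k(\gamma)\ge e+1-k$ I would argue that any partition obtained from $\gamma$ by fewer than $e+1-k$ hook additions cannot have as few as $k$ odd parts: moving a single bead changes the number of odd parts by at most a bounded amount, and more precisely the surplus of $e+1$ beads on runner $p-1$ is an invariant that must be ``spent'' — each unit of weight can remove at most one odd part from the count forced by that surplus — so at least $e+1-k$ moves are needed. Combining the two bounds gives $w_k(\gamma)=e+1-k$.

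Finally, for the count $|\mathcal{E}_k(\gamma)| = w_k(\gamma)+1 = e+2-k$ I would enumerate all ways to add exactly $e+1-k$ disjoint $p$-hooks to $\gamma$ ending with exactly $k$ odd parts, again purely in abacus terms. The disjointness/minimality forces every hook to be used efficiently, which pins down \emph{which} beads move (essentially the $e+1-k$ surplus beads on runner $p-1$ that have to be neutralised, each moved one step, into runner~$0$), and leaves as the only remaining freedom \emph{how far} one further bead is allowed to slide down a single runner — giving a one-parameter family of size $e+2-k$. I expect the main obstacle to be this last step: showing that the minimality constraint ``$w_k(\gamma)$ hooks, all disjoint'' really does eliminate all other configurations and leaves exactly a chain of $w_k(\gamma)+1$ partitions (totally ordered by dominance, which incidentally is what makes this lemma feed cleanly into Theorems~\ref{thm:main} and~\ref{thm:bound}). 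The parity bookkeeping on the abacus — keeping straight which bead-moves flip a part between odd and even — is fiddly rather than deep, but it is where an error would most easily creep in, so I would set up the parity of a part in terms of its bead position modulo~$2$ once and for all and then reduce every claim to a statement about the multiset of bead positions.
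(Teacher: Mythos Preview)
Your overall strategy --- work on the abacus, prove the upper bound by an explicit construction, then combine the lower bound with an enumeration of all minimal configurations --- is exactly the paper's. But the execution has two genuine gaps that would prevent the argument from going through.

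First, the abacus mechanics are confused. Adding a $p$-hook corresponds to sliding one bead one step \emph{down its own runner}; there is no ``push across the end of the abacus'' or ``wrapping around'' from runner $p-1$ to runner $0$. This confusion propagates: the paper's upper-bound construction is simply to move the lowest $e+1-k$ beads on runner $p-1$ each down one step, and the parity check is that the part corresponding to a bead equals the number of empty positions before it, so each of these moved beads now has an even number of spaces before it while the top $k$ beads on runner $p-1$ still have an odd number. (Your concrete check is also off: $(3,1,1)$ has three odd parts, not two; indeed $w_{e+1}(\gamma)=0$ precisely because $\gamma$ itself already has $e+1$ odd parts.)

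Second, and more seriously, the lower bound and the enumeration of $\mathcal{E}_k(\gamma)$ both rest on a parity observation you have not isolated. If after some bead moves the lowest row reached on any runner other than $p-1$ is row $\ell$, then every bead on runner $p-1$ in rows $\ell,\ell+1,\ldots$ still has an odd number of spaces before it, hence still contributes an odd part. This immediately bounds the number of odd parts from below and forces $\ell\ge r-k+1$ (in the paper's notation), which both gives $w_k(\gamma)\ge e+1-k$ and shows that any non--runner-$(p-1)$ move must land on runner~$1$ --- the only other runner with a bead below row $0$. The one-parameter family you are looking for is then indexed by how many of the $e+1-k$ moves are spent on runner $p-1$ versus on pushing the single low bead on runner~$1$ further down. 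Without identifying the special role of runner~$1$, your claim that ``minimality pins down which beads move'' is unsupported: a priori one could imagine moving beads on runners $2,\ldots,p-2$, and it is exactly the parity-of-spaces count that rules this out.
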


\begin{proof}
The $p$-core $\gamma$ is represented by the abacus $A$. 
Moving the lowest $\d+1-k$ beads on runner $p-1$  down one step leaves
a partition 
with exactly $k$ odd parts.
Therefore $w_k(\gamma) \le \d+1-k$.

Suppose that $\lambda$ is a partition with exactly~$k$
odd parts that can be obtained by 
a sequence of single step bead moves on $A$ in which exactly $\d-r$ beads
are moved on runner~$p-1$ and at most $\d+1-k$ moves are made in total.
 We may suppose that $r \ge k$ and that the beads on runner $p-1$ are moved
first, leaving an abacus $A^\star$.
Numbering rows 
so that row $0$ is the highest row,
let row $\ell$ be the lowest row of $A^\star$ to which any bead is moved in the subsequent moves.
Let $B$ be the abacus representing $\lambda$ 
that is obtained from $A^\star$ by making these moves.
The number of spaces before each beads on runner
$p-1$ in rows $\ell$, $\ell+1$, \ldots, $r$ 
is the same in both $A^\star$ and $B$, and is clearly odd in $A^\star$. Hence the
parts corresponding to these beads are odd. Therefore $\ell \ge r-k+1$.

If $B$ has a bead in row $\ell$ on a runner other than
runner $1$ or runner $p-1$, then this bead has been moved down from row $0$,
and so has been moved at least $\ell$ times.
The total number of moves made is at least $(\d-r) + \ell \ge \d-k+1$, and so
 $\ell = r-k+1$. But now $B$ has beads
corresponding to odd parts of~$\lambda$ on runner $p-1$ in row $0$,
as well as rows $\ell$, $\ell+1$,
\ldots, $r$, giving $k+1$ odd parts in total, a contradiction.

It follows that
the sequence of bead moves leading to $B$ may be reordered so that the
first $\d-r$ moves are made on runner $p-1$, and then the lowest
bead on runner $1$ is pushed down~$r-k$ times to row $r-k+1$.
The partition after these moves has $k+1$ odd parts. 
Moving the bead on runner $1$ down one step from row $r-k+1$ reduces the number
of odd parts by one, and is the only such move that does not move
a bead on runner $p-1$.
 Therefore $\mathcal{E}_k(\gamma)$ 
contains the partition  constructed at the start of the proof, and
one further partition 
for each $r \in \{0,1,\ldots, \d-k\}$. 
\end{proof}


Given an arbitrary weight $w\in\N$ and $k\in \N_0$, Lemma \ref{lemma:arbweight} 
gives an explicit 
partition $\lambda$ satisfying the hypothesis of Theorem \ref{thm:main} 
and such that $w_k(\gamma)=w$.
We use this in the following proposition.

\begin{proposition}\label{prop:arbweight}
Let $p$ be an odd prime and let $k$, $w \in \N_0$ be given. There
exists a $p$-core~$\gamma$ and a
partition $\lambda$ with $p$-core $\gamma$ and weight $w$ 
such that $\lambda$ has exactly $k$ odd parts and the
only  non-zero entries
in the column of the 
decomposition matrix labelled by $\lambda$ 
are $1$s lying in the $w+1$ rows labelled by elements of $\mathcal{E}_k(\gamma)$.
\end{proposition}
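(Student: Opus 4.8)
The plan is to combine Lemma~\ref{lemma:arbweight}, Theorem~\ref{thm:main} and Theorem~\ref{thm:bound} applied to a carefully chosen $p$-core. The degenerate case $k=w=0$ is handled at once by taking $\gamma=\lambda=\varnothing$, since then $n=0$ and the decomposition matrix of $S_0$ is $[1]$. So assume $k+w\ge 1$ and set $e=w+k-1\in\N_0$. Let $\gamma$ be the $p$-core produced by Lemma~\ref{lemma:arbweight} for this value of~$e$. Since $0\le k\le e+1$, that lemma gives $w_k(\gamma)=e+1-k=w$ and $|\mathcal{E}_k(\gamma)|=w_k(\gamma)+1=w+1$.

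Next I would verify that $\gamma$ satisfies the hypothesis of Theorem~\ref{thm:main}. This is only in question when $k\ge p$; but then $0\le k-p\le e+1$, so Lemma~\ref{lemma:arbweight} also applies to the integer $k-p$ and yields $w_{k-p}(\gamma)=e+1-(k-p)=w+p$. As $p\ge 3$ we get $w_{k-p}(\gamma)=w+p>w-1=w_k(\gamma)-1$, so the hypothesis holds. Theorem~\ref{thm:main} then partitions $\mathcal{E}_k(\gamma)$ into subsets $\mathcal{X}_1,\ldots,\mathcal{X}_c$ with unique dominance-maximal, $p$-regular elements $\nu_1,\ldots,\nu_c$, the column of the decomposition matrix of $S_n$ labelled by $\nu_j$ having $1$s precisely in the rows indexed by $\mathcal{X}_j$ and $0$s elsewhere.

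It remains to show $c=1$; this is exactly the observation recorded just before Example~\ref{ex:ex}, whose short proof I would reproduce. Each $\nu_j$ lies in $\mathcal{E}_k(\gamma)$, hence is obtained from $\gamma$ by adding $w_k(\gamma)=w$ disjoint $p$-hooks and so has $p$-core $\gamma$, weight $w$ and exactly $k$ odd parts; being $p$-regular, Theorem~\ref{thm:bound} forces its column to have at least $w+1$ non-zero entries, i.e. $|\mathcal{X}_j|\ge w+1$ for every $j$. Since $\sum_{j=1}^{c}|\mathcal{X}_j|=|\mathcal{E}_k(\gamma)|=w+1$, we conclude $c=1$ and $|\mathcal{X}_1|=w+1$. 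Taking $\lambda=\nu_1$ finishes the proof: $\lambda$ is $p$-regular with $p$-core $\gamma$, weight $w$ and exactly $k$ odd parts, and the only non-zero entries of its column are $1$s in the $w+1$ rows labelled by the partitions in $\mathcal{E}_k(\gamma)$.

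I do not expect a genuine obstacle here: the proposition is essentially a packaging of the three cited results around the explicit core of Lemma~\ref{lemma:arbweight}. The only points requiring any care are the verification of the side condition $w_{k-p}(\gamma)\neq w_k(\gamma)-1$ (handled above by a second application of Lemma~\ref{lemma:arbweight}) and the trivial boundary case $k=w=0$; the counting step that collapses the $\mathcal{X}_j$ to a single class is immediate once Theorem~\ref{thm:bound} is in hand.
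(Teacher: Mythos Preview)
Your proof is correct and follows essentially the same approach as the paper: choose the $p$-core from Lemma~\ref{lemma:arbweight} with $e=w+k-1$, verify the hypothesis of Theorem~\ref{thm:main} via a second application of that lemma, and then use Theorem~\ref{thm:bound} together with the count $|\mathcal{E}_k(\gamma)|=w+1$ to force $c=1$. The only immaterial difference is the base case $k=w=0$, where the paper takes $\lambda=(2)$ rather than $\lambda=\varnothing$.
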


\begin{proof}
If $w=0$ and $k=0$ then take $\lambda = (2)$. Otherwise
let $\gamma$ be the $p$-core in Lemma~\ref{lemma:arbweight} when $\d = w+k-1$.
By this lemma we have  $w_k(\gamma) = w$. Moreover, if $k \ge p$ then $w_{k-p}(\gamma) = w + p$.
Taking $\lambda$
to be a maximal element of $\mathcal{E}_k(\gamma)$,
the proposition follows from Theorem~\ref{thm:main}
and Theorem~\ref{thm:bound}.
\end{proof}

We now turn to an application of Proposition~\ref{prop:projectives}.
Write $H^{(2^m \; k)}_R$ for the twisted Foulkes module defined over
a commutative ring $R$. 
Since the ordinary character of $H^{(2^m \; k)}_\Q$ is multiplicity-free,
the endomorphism algebra of $H^{(2^m \; k)}_F$ is commutative whenever
the field~$F$ has characteristic zero.
Hence the endomorphism
ring of $H^{(2^m \; k)}_\Z$ is commutative. This ring has a canonical $\Z$-basis
indexed by the double cosets of the  subgroup $S_2 \wr S_m \times S_k$
in $S_{2m+k}$. This basis makes it clear that the
canonical map 
\[ \End_{\Z S_{2m+k}}(H_\Z^{(2^m \; k)}) \rightarrow \End_{\F_pS_{2m+k}}(H_{\F_p}^{(2^m \; k)}) \]
is surjective, and so $\End_{FS_{2m+k}}(H_F^{(2^m \; k)})$ is commutative
for \emph{any} field $F$. 
This fact has some strong consequences for the structure of 
twisted Foulkes modules.

\begin{proposition}\label{prop:homs} 
Let $U$ and $V$ be distinct summands in a decomposition of $H^{(2^m\; k)}$,
defined over a field $F$,
into direct summands. Then $\End_{FS_{2m+k}}(U)$ is commutative and
$\Hom_{FS_{2m+k}}(U,V) = 0$.
\end{proposition}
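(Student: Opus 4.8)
The plan is to exploit the commutativity of $\End_{FS_{2m+k}}(H_F^{(2^m\;k)})$, which is established in the paragraph immediately preceding the proposition. Write $E = \End_{FS_{2m+k}}(H^{(2^m\;k)})$ and note that a decomposition $H^{(2^m\;k)} = \bigoplus_i U_i$ into direct summands corresponds to a decomposition $1 = \sum_i e_i$ of the identity of $E$ into orthogonal idempotents, with $U_i = e_i H^{(2^m\;k)}$. The key point is that $\Hom_{FS_{2m+k}}(U_i, U_j)$ and $\End_{FS_{2m+k}}(U_i)$ can be identified, as $F$-modules, with $e_j E e_i$ and $e_i E e_i$ respectively; composition of homomorphisms corresponds to multiplication in $E$.

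First I would make this identification precise: given $\varphi \colon U_i \to U_j$, extend it to an element $\tilde\varphi$ of $E$ by letting it act as zero on all $U_\ell$ with $\ell \neq i$ and composing with the inclusion $U_j \hookrightarrow H^{(2^m\;k)}$; then $\tilde\varphi = e_j \tilde\varphi e_i \in e_j E e_i$, and conversely any element of $e_j E e_i$ restricts to a map $U_i \to U_j$. This gives mutually inverse bijections, so $\Hom_{FS_{2m+k}}(U_i,U_j) \cong e_j E e_i$ as $F$-vector spaces. Next, since $E$ is commutative and the $e_i$ are orthogonal idempotents, for $i \neq j$ we have $e_j E e_i = E e_i e_j = 0$; hence $\Hom_{FS_{2m+k}}(U_i, U_j) = 0$ whenever $U_i$ and $U_j$ are distinct summands. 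Finally, $\End_{FS_{2m+k}}(U_i) \cong e_i E e_i$, and this is a subalgebra (in fact $e_iEe_i = Ee_i$, the localization) of the commutative algebra $E$, so it is commutative; one checks that the ring isomorphism $\End_{FS_{2m+k}}(U_i) \cong e_i E e_i$ intertwines composition with multiplication, which transfers commutativity.

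I do not expect a serious obstacle here; the proposition is essentially a formal consequence of the commutativity of $E$ together with the standard dictionary between direct-sum decompositions of a module and idempotent decompositions of its endomorphism ring. The only mild subtlety worth spelling out is that ``distinct summands in a decomposition'' must be read as \emph{distinct constituents of a fixed direct-sum decomposition} (i.e.\ corresponding to different idempotents $e_i \neq e_j$ in the chosen family), not merely non-isomorphic modules appearing in some decomposition — though since $E$ is commutative it is in fact a product of local rings, so the indecomposable summands are pairwise non-isomorphic anyway, and the two readings coincide. I would state the argument at the level of idempotents and keep the verification that the Hom-space bijections respect composition to a single sentence.
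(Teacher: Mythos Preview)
Your proposal is correct and takes essentially the same approach as the paper: both deduce the result from the commutativity of $\End_{FS_{2m+k}}(H^{(2^m\;k)})$ via the standard correspondence between a direct-sum decomposition and orthogonal idempotents. The paper phrases this with explicit projections $\pi_U$ and inclusions $\iota_U,\iota_V$ (showing that a non-zero $\phi:U\to V$ would make $\pi_U\phi\iota_V$ and $\pi_U\iota_U$ fail to commute), whereas you use the idempotent-corner language $e_jEe_i$; these are the same argument in different notation.
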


\begin{proof}
Let $\pi_U$ be the projection map from $H^{(2^m \; k)}$
onto $U$ and let $\iota_U$ and $\iota_V$ be the inclusion maps of $U$ and $V$
respectively into $H^{(2^m \; k)}$.
Suppose that $\phi \in \Hom_{FS_{2n}}(U,V)$
is a non-zero homomorphism. Then $\pi_U \phi \iota_V$
does not commute with $\pi_U \iota_U$. (We compose homomorphisms from left to right.)
Moreover sending $\theta \in \End_{FS_{2m+k}}(U)$ to $\pi_U \theta \iota_U$
defines an injective map from $\End_{FS_{2m+k}}(U)$ into 
 the commutative algebra
$\End_{FS_{2m+k}}H^{(2^m \; k)}$.
\end{proof}

Proposition~\ref{prop:homs} implies that 
if $\lambda$ is a $p$-regular partition and~$P^\lambda$ is
a direct summand of $H^{(2^m \; k)}$, defined over a field
of characteristic~$p$, then there are no non-zero homomorphisms from
$P^\lambda$ to any other summand of $H^{(2^m \; k)}$.
Thus every composition factor of $H^{(2^m \; k)}$ isomorphic to $D^\lambda$
must come from  $P^\lambda$.
We also obtain the following corollary.

\begin{corollary}
Let $F$ be a field of odd characteristic. 
Given any $w \in \N$ there exists $n \in \N$ and an indecomposable projective
module $P^\lambda$ for $FS_n$ lying in a block of weight $w$ such that
$\End_{FS_n}(P^\lambda)$ is commutative.
\end{corollary}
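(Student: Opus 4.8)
The plan is to combine Lemma~\ref{lemma:arbweight}, Proposition~\ref{prop:projectives}, and the commutativity of $\End_{FS_n}(H^{(2^m\;k)}_F)$ established just above. First I would invoke Lemma~\ref{lemma:arbweight} with $k=0$ and $e=w-1$ to produce a $p$-core $\gamma$ with $w_0(\gamma)=w$; this $\gamma$ satisfies the hypothesis of Theorem~\ref{thm:main} vacuously, since $k=0<p$. Set $2m=|\gamma|+wp$ (note $k=0$, so $2m+k=2m$, and $2m$ is automatically even because $|\gamma|$ has the same parity as $wp$ only if\ldots) — actually, to avoid parity worries it is cleaner to allow general $k$; so instead take $k\in\{0,1\}$ chosen so that $|\gamma|+wp-k$ is even, and note Lemma~\ref{lemma:arbweight} still applies with $e=w+k-1$, giving $w_k(\gamma)=w$ and a maximal partition $\lambda\in\mathcal{E}_k(\gamma)$ which is $p$-regular by Proposition~\ref{prop:projectives}.

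Next I would apply Proposition~\ref{prop:projectives} to conclude that the projective cover $P^\lambda$ of $D^\lambda$ is a direct summand of $H^{(2^m\;k)}_F$. Since $\lambda$ has weight $w_k(\gamma)=w$, the block $B(\gamma,w)$ in which $P^\lambda$ lies has weight $w$, as required. Finally I would decompose $H^{(2^m\;k)}_F$ into indecomposable direct summands and apply Proposition~\ref{prop:homs}: taking $U=P^\lambda$, that proposition gives that $\End_{FS_{2m+k}}(P^\lambda)$ embeds in the commutative algebra $\End_{FS_{2m+k}}(H^{(2^m\;k)}_F)$, hence is itself commutative. Setting $n=2m+k$ finishes the proof.

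Honestly there is no hard step here: everything is a bookkeeping assembly of results proved earlier in the paper. The one point requiring a moment's care is the parity of $2m+k$ — one must be sure that $|\gamma|+wp-k$ can be made even by a legal choice of $k$ (here $k\in\{0,1\}$ works, and $1\le e+1=w+k$ whenever $w\ge1$, so $\gamma$ is still a valid $p$-core of the stated form), and that the hypothesis $w_{k-p}(\gamma)\ne w_k(\gamma)-1$ of Proposition~\ref{prop:projectives} is not an issue: since $k\le 1<p$, the hypothesis is empty. With that settled, the corollary is immediate.

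\begin{proof}
Let $w\in\N$. Choose $k\in\{0,1\}$ so that $|\gamma|+wp-k$ is even, where $\gamma$ is the $p$-core of Lemma~\ref{lemma:arbweight} with $e=w+k-1$ (this is a valid $p$-core, since $e+1=w+k\ge 1$). By that lemma $w_k(\gamma)=w$. Set $2m+k=|\gamma|+wp$, so that $2m\in\N$; since $k<p$, the hypothesis $w_{k-p}(\gamma)\ne w_k(\gamma)-1$ of Proposition~\ref{prop:projectives} is vacuous. Let $\lambda$ be a maximal element of $\mathcal{E}_k(\gamma)$ in the dominance order. By Proposition~\ref{prop:projectives}, $\lambda$ is $p$-regular and the projective cover $P^\lambda$ of $D^\lambda$ is a direct summand of $H^{(2^m\;k)}_F$. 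As $\lambda$ has weight $w_k(\gamma)=w$, the block of $S_n$ containing $P^\lambda$ has weight $w$, where $n=2m+k$. Fixing a decomposition of $H^{(2^m\;k)}_F$ into indecomposable direct summands, one of which is $P^\lambda$, Proposition~\ref{prop:homs} gives that $\End_{FS_n}(P^\lambda)$ is commutative.
\end{proof}
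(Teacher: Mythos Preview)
Your proof is correct and follows essentially the same route as the paper, which simply takes $k=0$ and $e=w-1$ throughout. Your parity worry is unnecessary: since Lemma~\ref{lemma:arbweight} guarantees $\mathcal{E}_0(\gamma)\neq\varnothing$, and any partition with no odd parts has even size, the quantity $|\gamma|+wp$ is automatically even, so $k=0$ always works and the detour through $k\in\{0,1\}$ (which as written is slightly circular, since $\gamma$ depends on $k$) can be dropped.
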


\begin{proof}
Let $\gamma$ be the $p$-core in Lemma~\ref{lemma:arbweight} when $\d+1 = w$.
Taking $k=0$ we see that $w_0(\gamma) = w$.
If $\lambda$ is a maximal element of $\mathcal{E}_0(\gamma)$ 
then, by Proposition~\ref{prop:projectives}, $P^\lambda$ is a direct
summand of $H^{(2^m)}$, where $2m = |\lambda|$ and
both modules are defined over the field $F$.
The result now follows from Proposition~\ref{prop:homs}.
\end{proof}

\medskip

\section*{Acknowledgements}
The main theorem is a generalization of a result proved
by the second author in his D.~Phil thesis, under the supervision of  Karin Erdmann. He
gratefully acknowledges her support.

\def\cprime{$'$} \def\Dbar{\leavevmode\lower.6ex\hbox to 0pt{\hskip-.23ex
  \accent"16\hss}D}
\providecommand{\bysame}{\leavevmode\hbox to3em{\hrulefill}\thinspace}
\providecommand{\MR}{\relax\ifhmode\unskip\space\fi MR }
\providecommand{\MRhref}[2]{%
  \href{http://www.ams.org/mathscinet-getitem?mr=#1}{#2}
}
\providecommand{\href}[2]{#2}
\renewcommand{\MR}[1]{\relax}

\end{document}